\title{Intersection cohomology of rank two\\ character varieties of surface groups}
\author{Mirko Mauri}
\newcommand{\Z}{\mathbb{Z}}
\newcommand{\Aff}{\mathbb{A}}
\newcommand{\Perv}{\mathrm{Perv}}
\newcommand{\rat}{\mathrm{rat}}
\newcommand{\Gm}{\mathbb{G}_m}
\newcommand{\CC}{\mathbb{C}}
\newcommand{\QQ}{\mathbb{Q}}
\newcommand{\RR}{\mathbb{R}}
\newcommand{\ZZ}{\mathbb{Z}}
\newcommand{\PP}{\mathbb{P}}
\newcommand{\Spec}{\operatorname{Spec}}
\newcommand{\Gr}{\operatorname{Gr}}
\newcommand{\Gl}{\operatorname{GL}}
\newcommand{\Sl}{\operatorname{SL}}
\newcommand{\PGl}{\operatorname{PGL}}
\newcommand{\alb}{\operatorname{alb}}
\newcommand{\MB}{M_{\mathrm{B}}}
\newcommand{\MDol}{{M}_{\mathrm{Dol}}}
\newcommand{\TDol}{{T}_{\mathrm{Dol}}}
\newcommand{\Mukai}{M(S, v)}
\newcommand{\Kumm}{K(S, v)}
\newcommand{\Map}{\operatorname{Map}}
\newcommand{\Stab}{\operatorname{Stab}}
\newcommand{\Mod}{\mathrm{Mod}}
\newcommand{\MDolG}{{M}_{\mathrm{Dol}}(C,G)}
\newcommand{\MBG}{{M}_{\mathrm{B}}(C,G)}
\newcommand{\tMBG}{\widetilde{M}_{\mathrm{B}}(C,G)}
\newcommand{\tMDolG}{\widetilde{M}_{\mathrm{Dol}}(C,G)}
\newcommand{\MDolSl}{{M}_{\mathrm{Dol}}(C, \Sl_n)}
\newcommand{\Hom}{\operatorname{Hom}}
\newcommand{\Pic}{\operatorname{Pic}}
\newcommand{\Sing}{\operatorname{Sing}}
\newcommand{\Bl}{\operatorname{Bl}}
\newcommand{\Fix}{\operatorname{Fix}}
\newcommand{\TJac}{T^*\operatorname{Jac}(C)}
\newcommand{\Jac}{\operatorname{Jac}(C)}
\newcommand{\Core}{\mathrm{Core}}
\newcommand{\MBtw}{M^{\mathrm{tw}}_{\mathrm{B}}}
\newcommand{\MBTW}{M^{\mathrm{tw}}_{\mathrm{B}}(C,G,d)}
\newcommand{\MDolTW}{M^{\mathrm{tw}}_{\mathrm{Dol}}(C,G,d)}
\newcommand{\MDoltw}{M^{\mathrm{tw}}_{\mathrm{Dol}}}
\newcommand{\Def}{\mathrm{Def}}
\theoremstyle:=definition,remark,plain\do{%
\expandafter\g@addto@macro\csname th@\theoremstyle\endcsname{%
\addtolength\thm@preskip\parskip
}%
}
\newtheorem{thm}{Theorem}[section]
\newtheorem{lem}[thm]{Lemma}
\newtheorem{cor}[thm]{Corollary}
\newtheorem{defn}[thm]{Definition}
\newtheorem{prop}[thm]{Proposition}
\newtheorem{conj}[thm]{Conjecture}
\theoremstyle{definition}
\newtheorem{notation}[thm]{Notation}
\newtheorem{exa}[thm]{Example}
\newtheorem{rmk}[thm]{Remark}
\crefname{thm}{Theorem}{Theorems}
\Crefname{thm}{Theorem}{Theorems}
\Crefname{thm}{Theorem}{Theorems}
\Crefname{thm}{Theorem}{Theorems}
\crefname{lem}{Lemma}{Lemmas}
\Crefname{lem}{Lemma}{Lemmas}
\crefname{Conjecture}{Conjecture}{Conjectures}
\Crefname{Conjecture}{Conjecture}{Conjectures}
\crefname{Corollary}{Corollary}{Corollaries}
\Crefname{Corollary}{Corollary}{Corollaries}
\crefname{Claim}{Claim}{Claims}
\Crefname{Claim}{Claim}{Claims}
\crefname{Proposition}{Proposition}{Propositions}
\Crefname{Proposition}{Proposition}{Propositions}
\crefname{Remark}{Remark}{Remarks}
\Crefname{Remark}{Remark}{Remarks}
\crefname{Definition}{Definition}{Definitions}
\Crefname{Definition}{Definition}{Definitions}
\crefname{Example}{Example}{Examples}
\Crefname{Example}{Example}{Examples}
\crefname{Exercise}{Exercise}{Exercises}
\Crefname{Exercise}{Exercise}{Exercises}
\newtheoremstyle{plain2}    
   {}            
   {}            
   {\itshape}    
   {}            
   {\bfseries}   
   {.}           
   {5pt plus 1pt minus 1pt}  
   {{\thmnumber{#1} \thmname{#2}{\thmnote{ (#3)}}}}          
\begin{document}
\maketitle
\begin{abstract}
    For $G = \mathrm{GL}_2, \mathrm{SL}_2, \mathrm{PGL}_2$ we compute the intersection E-polynomials and the intersection Poincar\'{e} polynomials of the $G$-character variety of a compact Riemann surface $C$ and of the moduli space of $G$-Higgs bundles on $C$ of degree zero. We derive several results concerning the P=W conjectures for these singular moduli spaces.
\end{abstract}
\vspace{0.5 cm}
\tableofcontents

\section{Introduction}
Let $C$ be a compact Riemann surface of genus $g \geq 2$, and $G$ be a complex reductive algebraic group.  The $G$-character variety of $C$, or \textbf{Betti moduli space}, is the affine GIT quotient
\begin{align} \MBG \coloneqq & \Hom(\pi_1(C), G)\sslash G \nonumber \\
 = & \big\{ (A_1, B_1, \ldots, A_{g}, B_{g}) \in G^{2g} \, \big| \, \prod^{g}_{j=1}[A_j, B_j]=1_{G} \big\}\sslash G. \label{eq:presentation}
\end{align}
It parametrises isomorphism classes of semi-simple representations of the fundamental group of $C$ with value in $G$. Remarkably, the space $\MBG$ is homeomorphic to the \textbf{Dolbeault moduli space} $\MDolG$, parametrising instead isomorphism classes of semistable principal $G$-Higgs bundles on $C$ of degree zero
; see \cite{Simpson1994}. 
For example, a $\Gl_n$-Higgs bundle is a pair $(E, \phi)$ with $E$ vector bundle of rank $n$ and $\phi \in H^0(C, \operatorname{End}(E)\otimes K)$. Such a pair is an $\Sl_n$-Higgs bundle, if in addition the determinant of $E$ is trivial and the trace of $\phi$ vanishes.

Since the pioneering work of Hitchin \cite{Hitchin1987}, the (non-algebraic) homeomorphism between $\MBG$ and $\MDolG$ have been exploited to study the topology of character varieties. The main result of this paper is the computation of some geometric invariants of $\MDolG$ and $\MBG$, namely the intersection E-polynomials and the intersection Poincar\'{e} polynomials of $\MDolG$ and $\MBG$ for $G=\Gl_2, \Sl_2, \PGl_2$; see \S\ref{sec:computations}. 

The motivation for this work stems from the newly stated P=W conjectures \cite[Conjecture 1.2, 1.4, 1.5]{FelisettiMauri2020} for singular character varieties; see also \cite[Question 4.1.7]{deCataldoMaulik2018} and the seminal paper \cite{deCataldoHauselMigliorini2012}. In fact, the explicit knowledge of intersection E-polynomials and intersection Poincar\'{e} polynomials is an essential ingredient in the proof of the P=W conjectures in rank 2 and genus 2 in \cite[Main Theorem]{FelisettiMauri2020}. Here, as an applications of our computations, we collect in \cref{thm:applications} several results related to the P=W conjectures in rank 2 and arbitrary genus. For brevity, we simply write $\MB$ for $\MBG$ and $\MDol$ for $\MDolG$ when $G=\Gl_2$ or $\Sl_2$, and we suppress subscripts ${}_\text{B}$ or ${}_\text{Dol}$ when we refer indifferently to the Betti or the Dolbeault side. \newpage

\begin{thm}[Remarks on the P=W conjectures]\label{thm:applications} The following facts hold:
\begin{enumerate}[label=\Alph*.]
\item the intersection E-polynomial $IE(\MB)$ is palindromic;
\item the PI=WI conj. for $\Sl_2$ is equivalent to the PI=WI conj. for $\Gl_2$;
\item $IH^{< 4g-6}(M)$ is freely generated by tautological classes;
\item the mixed Hodge structure on $H^*(\MDol)$ is not pure for $g>3$;
\item the P=W conjecture for any resolution of $M(C, \Gl_n)$ fails, when $M(C, \Gl_n)$ does not admit a symplectic resolution\footnote{A resolution of singularities $f\colon X\to Y$ is symplectic if a
holomorphic symplectic form on the smooth locus of $X$
extends to a symplectic form on $Y$.}.
\end{enumerate}
\end{thm}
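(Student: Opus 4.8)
The plan is to read all five statements off the explicit intersection E-polynomials and intersection Poincar\'e polynomials of \S\ref{sec:computations}, feeding them into three structural inputs: the purity of $IH^*(\MDol)$ coming from semiprojectivity of the Dolbeault space, the finite Galois cover relating the $\Gl_2$- and $\Sl_2$-moduli, and the non-abelian Hodge identification $IH^*(\MB)\cong IH^*(\MDol)$. For (A), I would use that $\MDol$ is semiprojective for the Hitchin $\Gm$-action: the contracting flow identifies $IH^*(\MDol)$ with the intersection cohomology of the compact central Hitchin fibre (the nilpotent cone), which is a pure Hodge structure satisfying Poincar\'e duality $IH^k\cong(IH^{2D-k})^\vee(-D)$ with $D=\dim_\CC\MB$. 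Transporting this symmetry to Hodge numbers yields the palindromy $IE(\MB;u,v)=(uv)^{D}\,IE(\MB;u^{-1},v^{-1})$, which can additionally be verified against the closed formula as a consistency check.

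For (B), the mechanism is the finite Galois cover $\TJac\times M(C,\Sl_2)\to M(C,\Gl_2)$ with group $\Pic^0(C)[2]=\pictorsion$ acting by tensorisation (together with its Betti counterpart). Since translations by $2$-torsion act trivially on $H^*(\TJac)$, this identifies $IH^*(M(C,\Gl_2))$ with $H^*(\TJac)\otimes IH^*(M(C,\Sl_2))^{\Pic^0(C)[2]}$. Both the perverse filtration of the Hitchin maps (which respect the product) and the weight filtration are multiplicative for this K\"unneth decomposition and survive passage to invariants, and PI=WI is classical for the abelian factor $\TJac$; hence matching the two filtrations for $\Gl_2$ is equivalent to matching them for the $\Sl_2$-factor. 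For (C), I would compare $IH^{<4g-6}(M)$ with the cohomology of the smooth twisted moduli space $\MDoltw$, whose ring is freely generated by the tautological K\"unneth components of the Chern classes of the universal bundle; the explicit intersection Poincar\'e polynomial shows that below $4g-6$ — the range controlled by the codimension of the strictly semistable locus — the two cohomologies agree and the tautological map is an isomorphism onto a free graded-commutative subalgebra.

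For (D), I would exploit the purity criterion that a variety $X$ with pure cohomology satisfies $E(X;t,t)=P(X;-t)$ for the ordinary E- and Poincar\'e polynomials. Computing $E(\MDol)$ by additivity of E-polynomials over a stratification by representation type, and $P(\MDol)$ through the decomposition theorem relating $H^*(\MDol)$ to $IH^*(\MDol)$ and the contributions of the singular strata, one checks that this identity is violated once $g>3$; the lowest-degree discrepancy exhibits a cohomology class whose weight exceeds its degree, so the mixed Hodge structure is not pure.

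Finally, for (E) the argument is by contradiction: if a resolution $\tilde M\to M(C,\Gl_n)$ satisfied P=W, then combining P=W with curious hard Lefschetz and the palindromy of (A) would force the perverse numbers of the composed Hitchin map $\tilde M\to\Lambda$ to coincide with the weight numbers of $H^*(\tilde M)$; as the weight filtration has bounded length, the perverse filtration cannot exceed it, and this forces $\tilde M\to M$ to be semismall. A semismall resolution of a variety with symplectic singularities is crepant, hence symplectic, contradicting the hypothesis that $M(C,\Gl_n)$ admits none. I expect this last implication — that P=W forces the resolution to be semismall — to be the main obstacle: it requires controlling the full perverse filtration of a \emph{resolution} of the Hitchin system rather than of $M$ itself, and showing that the Hodge--Tate classes created by a discrepant exceptional divisor cannot be reconciled with the P=W symmetry; it is precisely here that the explicit numerology of \S\ref{sec:computations} is indispensable.
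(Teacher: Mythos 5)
Your argument for part A rests on a false principle, and this is the most serious problem with the proposal. The non-abelian Hodge correspondence is only a homeomorphism: it identifies $IH^*(\MB)$ with $IH^*(\MDol)$ as graded vector spaces, \emph{not} as mixed Hodge structures. Indeed $IH^*(\MDol)$ is pure (\cref{prop:purityandother}), whereas the weight filtration on $IH^*(\MB)$ is far from pure --- this very discrepancy is the content of the PI=WI conjecture --- so purity and Poincar\'e duality on the Dolbeault side say nothing about the weights entering $IE(\MB)$. Moreover, even working on a single side, Poincar\'e duality on a non-compact variety pairs $IH^k_c$ with $IH^{2\dim M-k}$, so it can never by itself force palindromicity of an E-polynomial defined through compactly supported cohomology; and in fact $IE(\MDol)$ is \emph{not} palindromic (for $g=2$ it equals $17q^3+17q^4+q^5+q^6$), while $IE(\MB)$ is. The palindromicity of $IE(\MB)$ is a ``curious'' symmetry, exchanging weights rather than degrees, predicted by the intersection curious hard Lefschetz conjecture; the paper can establish it only by explicitly computing $IE(\MB)$ via the decomposition theorem for the Kirwan--O'Grady desingularization (\cref{introthm:decompositiontheorempiT}, \cref{thm:IEmainformula}, \cref{thm:IEB}) and inspecting the closed formulas (\cref{cor:palindromic}). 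There is no shortcut through purity.

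The remaining parts have gaps of varying severity. For B, your K\"unneth argument proves only the easy direction: the splittings \eqref{eq:GLtoSL} and \eqref{eq:PGLtoSL} involve only the $\Gamma$-invariant part of $IH^*(M(C,\Sl_2))$, so what you obtain is that PI=WI for $\Gl_2$ is equivalent to PI=WI for the \emph{invariant} part of the $\Sl_2$-cohomology; the whole content of the equivalence is the $\Gamma$-variant part, which the paper handles via the de Cataldo--Maulik--Shen mechanism (\cref{thm:deCataldoMaulikShen}): the explicitly computed variant polynomials (\cref{thm: varinvarIE}) show $IH^*_{\mathrm{var}}$ is concentrated in a single perverse and a single weight graded piece, so PI=WI holds there unconditionally. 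For C, numerical agreement of Betti numbers with the twisted space $\MDoltw$ is not a proof: one needs an actual map inducing the comparison (and tautological generation of $H^*(\MDoltw)$ is not \emph{free} generation); the paper constructs the isomorphism through gauge theory, namely $IH^{<4g-6}(M)\simeq H^{<4g-6}(M^{\mathrm{sm}})$, equivariant cohomology of the representation space, Kirwan surjectivity \eqref{Kirwan}, and the codimension estimates of \cref{lem:simplelocus}. For D, your purity test conflates ordinary and compactly supported cohomology of a singular non-compact variety; the paper's criterion is that purity of $H^*(\MDol)$ is equivalent to injectivity of $H^*(M)\to IH^*(M)$ (\cref{prop:purityandother}), which is refuted by the negative coefficients of $IP_t(M)-P_t(M)$ in degree $6$ (\cref{cor:IP-P}). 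Finally, for E you yourself flag that the implication ``P=W for a resolution forces semismallness'' is unproven, and it is; the paper avoids it entirely with a short direct argument (\cref{thm:P=Wfails}): any resolution carries an exceptional divisor $E$ over $\Sigma$ (by $\QQ$-factoriality of $M$), whose class lies in $P_0H^2(\widetilde{M})$ because a general affine line in the Hitchin base misses $\chi(\Sigma)$, of codimension at least $2$, by the characterization \eqref{prop:characperverse}, while $W_0H^2(\widetilde{M}_{\mathrm{B}})=0$ since $\widetilde{M}_{\mathrm{B}}$ is smooth --- an immediate contradiction with $P_0=W_0$.
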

In \S \ref{sec:P=Wconjectures} we recall the content of the P=W conjectures, and we give a proof of \cref{thm:applications}. Here we briefly explain the relevance of the previous statements in view of the P=W conjectures. 
\begin{enumerate}[label=\Alph*.]
    \item \cref{thm:applications}.A (\cref{thm:palindromicity}) provides a numerical evidence for the PI=WI conjecture. Indeed, the PI=WI conjecture 
    implies the palindromicity of $IE(M_B)$.
    \item \cref{thm:applications}.B (\cref{cor:PSL2givesGL2vv}) is a useful reduction statement. It says that it is enough to prove the PI=WI conjecture only for a portion of the intersection cohomology, namely its $\Gamma$-invariant part; see \eqref{eq:GLtoSL} and \eqref{eq:PGLtoSL}.
    \item The known proofs of the P=W conjecture for twisted character varieties \cite{deCataldoHauselMigliorini2012} and \cite{deCataldoMaulikShen2019} (cf also \S\ref{sec:twistedcharacter}) rely on the generation by tautological classes of the $\Gamma$-invariant part of $H^*(M)$. This is unknown for the intersection cohomology of the singular moduli spaces. \cref{thm:applications}.C (\cref{thm:generation}) provides a partial answer, i.e.\ the tautological generation of the intersection cohomology in low degree. 
    \item[D\&E.]  \cref{thm:applications}.D and \ref{thm:applications}.E stress the difference between the P=W conjectures for character varieties with or without a symplectic resolution; see \S \ref{P=WPI=WI}, \S \ref{sec:nosymplecticresolution}, and also \cite{FelisettiMauri2020}. 
\end{enumerate}

Our strategy to compute the intersection E-polynomials of $M$ is to use the Kirwan--O'Grady desingularization $\pi_T\colon T \to M$ (\S \ref{sec:desingularization}), and determine all the summands of the decomposition theorem for $\pi_T$; cf \cite[Remark 2.28]{Kirwan87}. This is a subtle task that we can complete thanks to a tight control of the geometry of $\pi_T$.

\begin{thm}[Decomposition theorem for $\pi_T$]\label{introthm:decompositiontheorempiT} There is an isomorphism in $D^bMHM_{\mathrm{alg}}(M)$ or in $D^b(M)$ (ignoring the Tate shifts):
\begin{align*}
R\pi_{T, *}\QQ[\dim T]= IC_{M} & \oplus  \bigoplus^{2g-4}_{i=-2g+4}\QQ_{\Sigma}^{\lceil \frac{2g-3-|i|}{2} \rceil}[\dim \Sigma -2i](-2g+3-i) \\
& \oplus \bigoplus^{2g-4}_{i=-2g+4} i_{\Sigma^\circ,*}\mathscr{L}^{\lfloor \frac{2g-3-|i|}{2} \rfloor}[\dim \Sigma-2i](-2g+3-i)\\
& \oplus \bigoplus^{3g-4}_{j=-3g+4} \QQ^{b(j)}_{\Omega}[\dim \Omega-2j](-3g+3-j),    
\end{align*}
where 
\begin{itemize}
    \item $\Sigma$ is the singular locus of $M$;
    \item $\Omega$ is the singular locus of $\Sigma$;
    \item $i_{\Sigma^\circ}\colon \Sigma^\circ \coloneqq \Sigma \setminus \Omega \to \Sigma$ is the natural inclusion;
    \item  $\mathscr{L}$ is the rank-one local system on $\Sigma^{\circ}$ corresponding to a quasi-\'{e}tale double cover $q\colon \Sigma_{\iota} \to \Sigma$ branched along $\Omega$ (see \cref{def:involution});
    \item $ b(-3g+3+j)$ is the coefficient of the monomial $q^j$ in the polynomial  \begin{align*}
    \frac{\left(1-q^{2g-2}\right) \left(1-q^{2g}\right) \left(1-q^4 -q^{2
 g-3}-q^{2g-1}+2q^{2g}\right)}{\left(1-q\right)^3
 \left(1-q^2\right)}-\frac{1-q^{2g}}{1-q^2}\\
  -\frac{q (1 - q^{2 g - 3}) (1 - q^{2 g - 2})}{(1-q) (1-q^2)}.
\end{align*}
\end{itemize}
\end{thm}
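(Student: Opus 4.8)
The plan is to run the decomposition theorem for $\pi_T$ stratum by stratum, using the factorisation of $\pi_T$ into blow-ups recorded in \S\ref{sec:desingularization} together with relative hard Lefschetz and Verdier self-duality to organise the summands. Since $T$ is smooth and $\pi_T$ is proper, $R\pi_{T,*}\QQ[\dim T]$ is a self-dual semisimple complex, and by the Beilinson--Bernstein--Deligne decomposition theorem (which I would apply in $MHM$ so as to also pin down the Tate twists) each summand is a shifted $IC$ of the closure of a stratum of $M\supset\Sigma\supset\Omega$ with coefficients in a semisimple local system. As $\pi_T$ is birational, $IC_M$ occurs exactly once. A crucial simplification is that both $\Sigma\cong\TJac/\iota$ and its singularities along $\Omega$ are $\ZZ/2$-quotient singularities, hence $\QQ$-homology manifolds (their links are odd-dimensional real projective spaces, so rational homology spheres); consequently $IC_\Sigma(\QQ)=\QQ_\Sigma[\dim\Sigma]$, which is why the constant sheaf $\QQ_\Sigma$ rather than $IC_\Sigma$ appears. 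The only rank-one local systems on $\Sigma^\circ$ that can occur are the trivial one and the order-two system $\mathscr{L}$ attached to the double cover $q$, and self-duality with relative hard Lefschetz forces the multiplicities to be symmetric in $i\mapsto -i$ and $j\mapsto -j$. It then remains to compute, for each stratum, the stalk cohomology of $R\pi_{T,*}\QQ$ along it and to subtract the contributions already carried by the deeper summands.

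Over $\Sigma^\circ$ the computation is governed by the transverse slice. At a generic point $[(L,\phi)\oplus(L^{-1},-\phi)]$ with stabiliser $\Gm$, the two off-diagonal $\operatorname{Ext}^1$'s give spaces $U^{\pm}\cong\CC^{m}$, $m=2g-2$, of opposite $\Gm$-weight, and the symplectic reduction identifies the slice with the affine cone over the incidence variety $I_m=\{([u],[v])\in\PP^{m-1}\times\PP^{m-1}:\langle u,v\rangle=0\}$, a $\PP^{m-2}$-bundle over $\PP^{m-1}$. A single blow-up of the vertex resolves this cone, so the fibre of $\pi_T$ over a point of $\Sigma^\circ$ is $I_m$, and the stalk of $R\pi_{T,*}\QQ$ there is $H^*(I_m)$. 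The primitive cohomology, of total dimension $2g-3$, is exactly the stalk of $IC_M$, while the Lefschetz-non-primitive part is distributed, by relative hard Lefschetz, into the summands of index $i$ with total multiplicity $(2g-3)-|i|$. The residual involution $\iota$ acts as the transposition $Z\mapsto Z^{\mathsf T}$, that is by swapping the two projective factors; its symmetric part (the larger, $\lceil\frac{2g-3-|i|}{2}\rceil$-dimensional piece) carries the trivial system $\QQ_\Sigma$, and its antisymmetric part (of dimension $\lfloor\frac{2g-3-|i|}{2}\rfloor$) carries $\mathscr{L}$. Identifying the antisymmetric system with $\mathscr{L}$ requires checking that the monodromy of the family of slices around $\Omega$ is the sign character of $q$, which I would deduce from the fact that $\iota$ is induced by the Weyl element of $\Sl_2$.

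Over the deepest stratum $\Omega$, which consists of the $2^{2g}$ points $\operatorname{Jac}(C)[2]\times\{0\}$, the fibre $\pi_T^{-1}(\omega)$ is the iterated-blow-up fibre obtained by first blowing up $\Omega$ and then the strict transform of $\Sigma$; its Poincar\'e polynomial is the first rational expression in the statement. I would then obtain the coefficients $b(j)$ by subtracting from this polynomial the stalks of the summands already determined: the stalk of $IC_M$ at $\omega$, of Poincar\'e polynomial $\frac{1-q^{2g}}{1-q^2}$, and the combined stalk at $\omega$ of the derived pushforwards of the $\Sigma$-supported summands, equal to $\frac{q(1-q^{2g-3})(1-q^{2g-2})}{(1-q)(1-q^2)}$. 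Because $\omega$ lies in $\Sing\Sigma$, these restrictions are genuine local intersection-cohomology computations performed in the slice $\CC^{2g}/\iota$ (in particular the $\mathscr{L}$-summands contribute through the twisted cohomology of the link of $\omega$ in $\Sigma^\circ$), and the non-negativity and palindromicity of the resulting $b(j)$ serve as consistency checks on the whole decomposition.

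The hard part will be the control of the deepest stratum $\Omega$: one must simultaneously determine the cohomology of the intricate iterated-blow-up fibre over $\omega$ and the exact stalks of the $IC_M$- and $\Sigma$-summands at $\omega$, and then verify that their difference is effective and matches $b(j)$, with no spurious summands and with the supports being precisely $M$, $\Sigma$ and $\Omega$. Two further delicate points are the monodromy computation on $\Sigma^\circ$ distinguishing $\QQ_\Sigma$ from $\mathscr{L}$, and, once the underlying perverse sheaves are known, the determination of all the Tate twists by a weight analysis in $MHM$, which is what upgrades the isomorphism in $D^b(M)$ to one in $D^bMHM_{\mathrm{alg}}(M)$.
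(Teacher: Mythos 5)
Your proposal is correct and takes essentially the same route as the paper: the decomposition theorem with supports $M$, $\Sigma$, $\Omega$; the rational-homology-manifold property of $\Sigma$ giving $\QQ_\Sigma$ and $i_{\Sigma^\circ,*}\mathscr{L}$ rather than genuine $IC$'s; the affine cone over $I_{2g-3}$ as normal slice to $\Sigma^\circ$, with the swap involution splitting the total multiplicity $2g-3-|i|$ into the $\lceil\,\cdot\,\rceil$ (trivial) and $\lfloor\,\cdot\,\rfloor$ ($\mathscr{L}$) parts; and $b(j)$ obtained by subtracting the $IC_M$- and $\QQ_\Sigma$-stalks from the cohomology of the fibre over $\Omega$. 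The quantities you assert without derivation (the E-polynomial of the fibre over $\Omega$, and the stalk $IE(N_\Omega)=\frac{1-q^{2g}}{1-q^2}$ of $IC_M$ at $\Omega$) are exactly what \S\ref{sec:intersectioncohomologyoflocalmodels} supplies — the latter via an auxiliary application of the decomposition theorem to $\pi_S|_{\Omega_S}$ over $\Omega_R$ combined with the affine-cone formula of \cref{prop: interseccone} — and all the values you state agree with the paper's.
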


Note in addition that the same decomposition holds for the Mukai moduli space of semistable sheaves on K3 or abelian surfaces with Mukai vector $v = 2 w \in H^*_{\mathrm{alg}}(S, \ZZ)$, where $w$ is primitive and $w^2=2(g-1)$, thus suggesting other potential applications of \cref{introthm:decompositiontheorempiT}. This is indeed a consequence of the \textbf{stable isosingularity principle} (\cref{Stableisosingularityprinciple}), which roughly says that Betti, Dolbeault and Mukai moduli spaces have the same type of singularities, in the sense of \cref{defn:equisingularity}.  

It is conceivable that the computation of the intersection E-polynomials in rank 3 can be pursued with no substantial conceptual difference. In higher rank however this seems hard. In fact, closed formulas may be cumbersome and less enlightening. Notwithstanding, we believe that the rank 2 case can inspire the investigation of the higher rank case, especially in relation to the P=W conjectures \cite{FelisettiMauri2020} and the Hausel--Thaddeus topological mirror symmetry conjecture for singular character varieties \cite[Remark 3.30]{Hausel13}. 

\subsection{Notation} 
The intersection cohomology of a complex variety $X$ with middle perversity and rational coefficients is denoted by $IH^*(X)$. Ordinary singular cohomology with rational coefficients is denoted by $H^*(X)$. The subscript $c$ stands for compactly supported intersection or ordinary cohomology, respectively $IH_c^*(X)$ and $H_c^*(X)$. Recall that they all carry mixed Hodge structures.

The Poincar\'{e} polynomial, the intersection Poincar\'{e} polynomial, the intersection Euler characteristic, the E-polynomial and the intersection E-polynomial are defined by 
\begin{align*}
    P_t(X) & = \sum_{d} \dim H^d(X) t^d, \\
    IP_t(X) & = \sum_{d} \dim IH^d(X) t^d, \\
    I\chi(X) & = \sum_{d} (-1)^d \dim IH^d(X),\\
    E(X) & = \sum_{r,s,d}(-1)^d \dim ( \Gr^W_{r+s} H^d_{c}(X, \CC))^{r,s} u^r v^s,\\
    IE(X) & = \sum_{r,s,d}(-1)^d \dim ( \Gr^W_{r+s} IH^d_{c}(X, \CC))^{r,s} u^rv^s.
\end{align*}
We will often write $q \coloneqq uv$. 

The action of a finite group $\Gamma$ on $X$ induces the splitting 
\[H^d(X) = H^d(X)^{\Gamma} \oplus H_{\mathrm{var}}^*(X),\]
where $H^d(X)^{\Gamma}$ is fixed by the action of $\Gamma$, and $H_{\mathrm{var}}^d(X)$ is the {variant} part, i.e.\ the unique $\Gamma$-invariant complement of $H^d(X)^{\Gamma}$ in $H^d(X)$. Analogous splittings hold for the $\Gamma$-modules $H^d_c(X)$, $IH^d(X)$ and $IH^d_c(X)$. The label ${}^{\Gamma}$ or ${}_{\mathrm{var}}$, written after the polynomials above, imposes to replace ordinary (or intersection) cohomology with its $\Gamma$-invariant or $\Gamma$-variant part respectively, e.g.\ $IE(X)^{\Gamma} = \sum_{r,s,d}(-1)^d \dim ( \Gr^W_{r+s} IH^d_{c}(X, \CC)^{\Gamma})^{r,s} u^rv^s$.

If $\iota\colon X \to X$ is an involution, we simply use  the superscript $+$ or $-$ to denote the $\iota$-invariant and $\iota$-variant part, e.g.\ $P(X)^{+} = \sum_d \dim H^d(X)^{+} t^d \coloneqq P(X)^{\langle \iota \rangle}$.


We always denote by $C$ a complex projective curve of genus $g \geq 2$, unless differently stated. For notational convenience, we simply write $\MB$ for $\MBG$ and $\MDol$ for $\MDolG$ when $G=\Gl_2$ or $\Sl_2$, and we suppress subscripts ${}_\text{B}$ or ${}_\text{Dol}$ when we refer indifferently to the Betti or the Dolbeault side. We adopt the same convention for the strata $\Sigma_{\mathrm{B}}(C,G)$, $\Sigma_{\mathrm{Dol}}(C,G)$, $\Omega_{\mathrm{B}}(C,G)$, $\Omega_{\mathrm{Dol}}(C,G)$.

\subsection{Computations}\label{sec:computations}
As an application of the decomposition theorem (\cref{introthm:decompositiontheorempiT}), we can express $IE(M)$ as a function of the E-polynomials of $M$, $\Sigma_{\iota}$ and $\Omega$; see \cref{prop:singulMB} for the definition of these strata, and \S \ref{sec:proofs} for the proofs of the following expressions.
\begin{thm}\label{thm:IEmainformula}
\begin{equation}\label{eq:IE}
 IE(M) =  E(M)+ (q^2E(\Sigma_{\iota})^+ + qE(\Sigma_{\iota})^-) \cdot \frac{1 - q^{2 g - 4}}{ 1-q^2} +E(\Omega) \cdot q^{2g-2}.   
\end{equation}
\end{thm}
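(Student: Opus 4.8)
The plan is to apply the E-polynomial homomorphism to the decomposition theorem (\cref{introthm:decompositiontheorempiT}) and solve for $IE(M)$. Recall that $E(-)$ factors through the Grothendieck group of mixed Hodge modules, so it is additive on direct sums, satisfies $E(\mathcal{F}[n]) = (-1)^n E(\mathcal{F})$ and $E(\mathcal{F}(m)) = q^{-m}E(\mathcal{F})$, and is unchanged by the proper pushforward $R\pi_{T,*}$. Since $\dim T = \dim M$ and $\dim \Sigma$, $\dim \Omega$ are all even (the spaces are holomorphic symplectic, or strata thereof), every shift $[\dim T]$, $[\dim \Sigma -2i]$, $[\dim \Omega - 2j]$ contributes a trivial sign. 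Hence the left-hand side evaluates to $E(T)$, while the summand $IC_M$ contributes exactly $IE(M)$.

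First I would translate the three families of summands into the strata E-polynomials appearing in the statement. The constant-sheaf summands contribute multiples of $E(\Sigma)$, the summands $i_{\Sigma^\circ,*}\mathscr{L}$ contribute multiples of $E(\Sigma^\circ,\mathscr{L})$, and the $\QQ_\Omega$ summands contribute multiples of $E(\Omega)$. Here I would use the quasi-\'etale double cover $q\colon \Sigma_\iota \to \Sigma$ of \cref{def:involution}: over $\Sigma^\circ$ it is the \'etale cover attached to $\mathscr{L}$, so $q_*\QQ$ splits into invariants and anti-invariants and yields $E(\Sigma) = E(\Sigma_\iota)^+$ and $E(\Sigma^\circ,\mathscr{L}) = E(\Sigma_\iota)^-$ (the $\iota$-fixed branch locus $\Omega$ is already absorbed via $E(\Sigma) = E(\Sigma^\circ) + E(\Omega)$). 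Collecting the Tate twists $(-2g+3-i) \mapsto q^{2g-3+i}$ together with the ceiling/floor multiplicities, this produces a relation
\[
E(T) = IE(M) + A\,E(\Sigma_\iota)^+ + B\,E(\Sigma_\iota)^- + D\,E(\Omega),
\]
where $A = \sum_i \lceil \tfrac{2g-3-|i|}{2}\rceil\, q^{2g-3+i}$, $B = \sum_i \lfloor \tfrac{2g-3-|i|}{2}\rfloor\, q^{2g-3+i}$, and $D$ is the analogous $b(j)$-weighted sum over $\Omega$.

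To eliminate $E(T)$ I would compute it a second time directly from the Kirwan--O'Grady construction of \S\ref{sec:desingularization}: the map $\pi_T$ is an isomorphism over $M \setminus \Sigma$, and the exceptional loci over $\Sigma^\circ$ and $\Omega$ are explicit bundles whose transverse geometry is again governed by the double cover $\Sigma_\iota$. This gives a second expression $E(T) = E(M) + A'E(\Sigma_\iota)^+ + B'E(\Sigma_\iota)^- + D'E(\Omega)$, in which the blow-up contributions $A',B',D'$ carry the same ``tent-shaped'' polynomials as $A,B,D$. Equating the two computations of $E(T)$ and solving yields $IE(M) = E(M) + (A'-A)E(\Sigma_\iota)^+ + (B'-B)E(\Sigma_\iota)^- + (D'-D)E(\Omega)$, so it remains to verify the three coefficient identities $A'-A = q^2\tfrac{1-q^{2g-4}}{1-q^2}$, $B'-B = q\tfrac{1-q^{2g-4}}{1-q^2}$ and $D'-D = q^{2g-2}$.

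I expect the coefficient bookkeeping to be the main obstacle. The sums $A,B,A',B'$ individually have growing coefficients, and only their differences telescope to the short geometric series $q^2(1+q^2+\cdots+q^{2g-6})$ and $q(1+q^2+\cdots+q^{2g-6})$; checking this collapse, and especially the more delicate cancellation over $\Omega$ that reduces the whole $b(j)$-sum to the single monomial $q^{2g-2}$, is where the precise control of the fibres of $\pi_T$ is indispensable. The parity split into the $q^2(\cdots)$ and $q(\cdots)$ pieces should fall out of the $\pm$ grading of the transverse intersection cohomology along $\Sigma^\circ$, matching precisely the ceiling/floor discrepancy between the constant-sheaf and $\mathscr{L}$ summands in \cref{introthm:decompositiontheorempiT}.
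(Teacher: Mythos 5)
Your proposal is correct and takes essentially the same approach as the paper: the paper likewise evaluates compactly supported E-polynomials on the decomposition theorem, recomputes $E(T)$ by additivity over the strata of the Kirwan--O'Grady exceptional locus (using the description of $D^{\circ}_2$ as the involution quotient of the $I_{2g-3}$-bundle, the triviality of $\pi_T^{-1}(\Omega)\to \Omega$, and $IE(N_{\Omega})$), and cancels the two expressions for $E(T)$. The three coefficient identities you defer are exactly the collapses verified in the paper's proof, and they hold as you state them.
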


\begin{thm}\label{thm:IEB} 
The intersection E-polynomials of $\MB$ are
\begin{align*}
IE(\MB(C, \Sl_2)) = & (q^{2g-2}+1)(q^2-1)^{2g-2}+\frac{1}{2}q^{2g-3}(q^2+1)((q+1)^{2g-2}\\ & -(q-1)^{2g-2}) +2^{2g-1}q^{2g-2}((q+1)^{2g-2}+(q-1)^{2g-2}),\\
IE(\MB(C, \PGl_2))  = &  (q^{2g-2}+1)(q^2-1)^{2g-2} + \frac{1}{2}q^{2 g - 3}(q^2+q+1)(q + 1)^{2 g - 2} \\
&   - 
 \frac{1}{2}q^{2 g - 3}(q^2-q+1)(q - 1)^{2 g - 2},\\
 IE(\MB(C, \Gl_2))  = & (q-1)^{2g} \cdot  IE(\MB(C, \PGl_2)).\\
\end{align*}
\end{thm}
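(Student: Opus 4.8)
The plan is to feed the three geometric inputs $E(M)$, $E(\Sigma_\iota)^{\pm}$ and $E(\Omega)$ into the master formula \cref{thm:IEmainformula} and simplify, treating $\MB(C,\Sl_2)$ first and then deducing $\PGl_2$ and $\Gl_2$ from it. So the first step is to make the strata of \cref{prop:singulMB} completely explicit. For $\MB(C,\Sl_2)$ the semisimple reducible representations are the $\chi\oplus\chi^{-1}$ with $\chi\in\Hom(\pi_1(C),\Gm)=(\CC^*)^{2g}$, so $\Sigma=(\CC^*)^{2g}/\iota$ with $\iota$ the inversion $\chi\mapsto\chi^{-1}$; its branched double cover is $\Sigma_\iota=(\CC^*)^{2g}$, and the deepest stratum $\Omega=\Fix(\iota)=\mu_2^{2g}$ consists of the $2^{2g}$ central representations, whence $E(\Omega)=2^{2g}$.

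The second step is the equivariant E-polynomial of $\Sigma_\iota$. I would track the involution through a signed E-polynomial $E^{\iota}(X):=\sum_d(-1)^d\sum_{r,s}\mathrm{tr}\big(\iota\mid(\Gr^W_{r+s}H^d_c(X))^{r,s}\big)u^rv^s$, so that $E(X)^{\pm}=\tfrac12(E(X)\pm E^{\iota}(X))$. Since $E^{\iota}$ is multiplicative under the K\"unneth formula for a diagonal involution, and since inversion on $\CC^*$ fixes $H^2_c$ (contributing $q$) and negates $H^1_c$ (contributing $-1$), so that $E^{\iota}(\CC^*)=q+1$, I obtain $E^{\iota}(\Sigma_\iota)=(q+1)^{2g}$ and hence
\[
E(\Sigma_\iota)^{\pm}=\tfrac12\big((q-1)^{2g}\pm(q+1)^{2g}\big).
\]
Substituting these, together with $E(\Omega)=2^{2g}$ and the E-polynomial $E(\MB(C,\Sl_2))$ recalled from the stratification into the smooth irreducible locus and $\Sigma$, into \cref{thm:IEmainformula}, and using $\tfrac{q^2-1}{1-q^2}=-1$ to collapse the denominator, should yield the stated closed form after routine algebra.

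For the remaining two groups I would avoid redoing the Kirwan--O'Grady desingularization. Using $\MB(C,\PGl_2)_0=\MB(C,\Sl_2)/\Gamma$ with $\Gamma=\Hom(\pi_1(C),\mu_2)\cong\mu_2^{2g}$ acting by $\rho\mapsto\rho\otimes\epsilon$, one has $IE(\MB(C,\PGl_2))=IE(\MB(C,\Sl_2))^{\Gamma}$, which I would compute by taking $\Gamma$-invariants term by term in the $\Gamma$-equivariant form of \cref{introthm:decompositiontheorempiT}. For $\Gl_2$ I would use the presentation $\MB(C,\Gl_2)=\big(\MB(C,\Sl_2)\times(\CC^*)^{2g}\big)/\mu_2^{2g}$, in which $\mu_2^{2g}$ tensors the $\Sl_2$-factor and translates the determinant torus $(\CC^*)^{2g}$. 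Because translations act trivially on $H^*((\CC^*)^{2g})$, the invariants of the K\"unneth tensor product split as $IH^*(\MB(C,\Sl_2))^{\Gamma}\otimes H^*((\CC^*)^{2g})$, giving exactly $IE(\MB(C,\Gl_2))=(q-1)^{2g}\,IE(\MB(C,\PGl_2))$.

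The main obstacle is the $\PGl_2$ case, where the $\Gamma$-action must be understood on each summand of the decomposition theorem and not merely on $M$. On the point-supported summands $\Gamma$ permutes the $2^{2g}$ points of $\Omega$ simply transitively, so only a one-dimensional space of invariants survives; on the $\Sigma$-supported summands the delicate point is the action on the rank-one local system $\mathscr{L}$, where the heuristic that translations act trivially on cohomology can fail because $\Gamma$ moves $\mathscr{L}$ itself and alters the branch structure of $\Sigma_\iota$. Pinning down these equivariant multiplicities correctly --- equivalently, identifying the $\PGl_2$ strata directly and recomputing $E(\Sigma_\iota)^{\pm}$ and $E(\Omega)$ for them --- is the genuinely new difficulty, alongside ensuring the recalled input $E(M)$ carries the correct (generally non-pure) mixed Hodge structure; once the invariants are fixed, the passage to the displayed $\PGl_2$ polynomial and then to $\Gl_2$ is purely formal.
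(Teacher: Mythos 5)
Your computation of $IE(\MB(C,\Sl_2))$ is essentially the paper's own proof: feed into \cref{thm:IEmainformula} the strata of \cref{prop:singulMB}, the equivariant E-polynomials $E(\Sigma_{\iota})^{\pm}=\tfrac{1}{2}\big((q-1)^{2g}\pm(q+1)^{2g}\big)$ (which agree with \eqref{eq:C+} and \eqref{eq:C-}), $E(\Omega)=2^{2g}$, and the E-polynomial $E(\MB(C,\Sl_2))$ recalled from \cite{BaragliaHekmati17} (equivalently \cite{MartinezMunoz16}). That part is correct and matches the paper.

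The gap is in your route to $\PGl_2$, and you partly flag it yourself. Computing $IE(\MB(C,\PGl_2))=IE(\MB(C,\Sl_2))^{\Gamma}$ by taking $\Gamma$-invariants summand by summand in \cref{introthm:decompositiontheorempiT} requires the $\Gamma$-module structure of each summand, in particular of the cohomology supported on $\Sigma$ with coefficients in $\mathscr{L}$, where the heuristic that translations act trivially genuinely fails (the branch points $\Omega$ are permuted in the regular representation, and this propagates into $H^*_c(\Sigma^\circ,\mathscr{L})$). You name this as ``the genuinely new difficulty'' but do not resolve it, and that is exactly the content that would have to be supplied; the paper carries out an equivariant analysis of this kind only on the Dolbeault side (\S\ref{sec:SLtoPGL}, \cref{rmk:replace2gwith1}), in order to prove \cref{thm:IEDol} and \cref{thm:IP}, not for \cref{thm:IEB}. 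Moreover, your proposed fallback --- ``identifying the $\PGl_2$ strata directly and recomputing $E(\Sigma_{\iota})^{\pm}$ and $E(\Omega)$ for them'' --- is not actually available: \cref{prop:singulMB} and \cref{thm:IEmainformula} are only established for $G=\Gl_2,\Sl_2$, and the quotient map $\MB(C,\Sl_2)\to\MB(C,\PGl_2)$ is not \'{e}tale, since nontrivial elements of $\Gamma$ have fixed points (e.g.\ irreducible representations induced from index-two subgroups of $\pi_1(C)$), so $\MB(C,\PGl_2)$ has extra singular loci and its stratification is not the one the master formula is built on. The paper sidesteps all of this by running the argument in the opposite direction: \cref{thm:IEmainformula} applies verbatim to $\Gl_2$, with $\Sigma_{\iota,\mathrm{B}}(C,\Gl_2)=(\CC^*)^{2g}\times(\CC^*)^{2g}$ carrying the swap involution, $\Omega_{\mathrm{B}}(C,\Gl_2)\simeq(\CC^*)^{2g}$, and $E(\MB(C,\Gl_2))$ again taken from \cite{BaragliaHekmati17}; this gives $IE(\MB(C,\Gl_2))$ directly, and then $IE(\MB(C,\PGl_2))=(1-q)^{-2g}\,IE(\MB(C,\Gl_2))$ by \eqref{eq:PGLtoSL} and \eqref{eq:GLtoSL}. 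In short, the $\Gl_2$--$\PGl_2$ relation you invoke at the end is correct, but it must be used to derive $\PGl_2$ from $\Gl_2$, not the other way around; as written, neither your $\PGl_2$ nor your $\Gl_2$ closed formula is actually pinned down.
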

\begin{cor}\label{cor:palindromic}
The intersection E-polynomials $IE(\MB)$ are palindromic.
\end{cor}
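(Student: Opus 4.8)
The plan is to read palindromicity directly off the explicit formulas in \cref{thm:IEB}. Recall that a Laurent polynomial $P(q)$ whose nonzero terms range from degree $m$ to degree $d$ is \emph{palindromic} when $q^{m+d}P(1/q)=P(q)$; I will call $m+d$ its \emph{symmetry degree} and record a sign $\epsilon\in\{\pm1\}$ by writing $q^{m+d}P(1/q)=\epsilon P(q)$. For the intersection E-polynomials at hand the relevant symmetry degree is the complex dimension of the variety, namely $6g-6$ for $\MB(C,\Sl_2)$ and $\MB(C,\PGl_2)$ and $8g-6$ for $\MB(C,\Gl_2)$; thus the assertion is exactly $IE(\MB)(q)=q^{\dim_\CC \MB}\,IE(\MB)(1/q)$.

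The key elementary observation is that symmetry is multiplicative and additive: if $P_1,P_2$ are $\epsilon_i$-symmetric of symmetry degree $s_i$, then $P_1P_2$ is $\epsilon_1\epsilon_2$-symmetric of symmetry degree $s_1+s_2$, and if moreover $s_1=s_2$ and $\epsilon_1=\epsilon_2$ then $P_1+P_2$ is $\epsilon_1$-symmetric of the same degree. I would then tabulate the building blocks appearing in \cref{thm:IEB}: a monomial $q^a$ is $(+)$-symmetric of degree $2a$; each factor $q^k+1$ is $(+)$ of degree $k$ while $q^k-1$ is $(-)$ of degree $k$; the quadratics $q^2+1$, $q^2+q+1$, $q^2-q+1$ are all $(+)$ of degree $2$; and an even power $(q\pm1)^{2m}$ is $(+)$ of degree $2m$ (so $(q^2-1)^{2g-2}$ is $(+)$ of degree $4g-4$). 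The only point requiring a second's care is that the combinations $(q+1)^{2g-2}\pm(q-1)^{2g-2}$ have their extreme coefficients cancel, yet a direct substitution shows $q^{2g-2}\big((1/q+1)^{2g-2}\pm(1/q-1)^{2g-2}\big)=(q+1)^{2g-2}\pm(q-1)^{2g-2}$, so they remain $(+)$-symmetric of symmetry degree $2g-2$.

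With this dictionary the verification is a bookkeeping exercise. For $\MB(C,\Sl_2)$ each of the three summands factors into the above blocks with symmetry degrees summing to $6g-6$ and total sign $+$: the term $(q^{2g-2}+1)(q^2-1)^{2g-2}$ gives $(2g-2)+(4g-4)$; the term carrying $(q+1)^{2g-2}-(q-1)^{2g-2}$ gives $2(2g-3)+2+(2g-2)$ from the factors $q^{2g-3}$, $q^2+1$ and the difference; and the last term gives $2(2g-2)+(2g-2)$. The same tally works verbatim for the three summands of $IE(\MB(C,\PGl_2))$, using that $q^2\pm q+1$ contribute symmetry degree $2$. Since in each case all summands share the symmetry degree $6g-6$ and sign $+$, their sums are palindromic. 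Finally $IE(\MB(C,\Gl_2))=(q-1)^{2g}\,IE(\MB(C,\PGl_2))$ is a product of a $(+)$-symmetric factor of degree $2g$ with a $(+)$-symmetric factor of degree $6g-6$, hence $(+)$-symmetric of degree $8g-6=\dim_\CC\MB(C,\Gl_2)$, as required.

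No real obstacle arises: once \cref{thm:IEB} is in hand the statement is a finite calculation, the only pitfalls being the factor of two in the symmetry degree of a bare monomial $q^a$ and the coefficient cancellation in the $\pm$ combinations noted above. I would also remark that this palindromicity is precisely the Curious Poincar\'e Duality predicted by the PI=WI conjecture (compare \cref{thm:applications}.A), so the computation furnishes it unconditionally rather than deriving it from the naive Poincar\'e duality of the affine variety $\MB$, which would instead relate $IH^*_c$ to $IH^*$ and not by itself yield the symmetry of $IE$.
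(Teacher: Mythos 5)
Your proof is correct and follows exactly the route the paper intends: \cref{cor:palindromic} is stated as an immediate consequence of the explicit formulas in \cref{thm:IEB}, with the verification (which the paper leaves to the reader) being precisely the sign-and-symmetry-degree bookkeeping you carry out, including the correct handling of the cancellation in $(q+1)^{2g-2}\pm(q-1)^{2g-2}$ and the shift by $(q-1)^{2g}$ for the $\Gl_2$ case.
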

\begin{cor}
The intersection Euler characteristics of $M$ are
\begin{align*}
    I\chi(M(C, \Sl_2)) & =2^{2g-2}(2^{2g-1}+1),\\
    I\chi(M(C, \PGl_2)) & =3 \cdot 2^{2g-3}.
\end{align*}
\end{cor}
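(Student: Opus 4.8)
The plan is to obtain each intersection Euler characteristic by specializing the intersection E-polynomials of \cref{thm:IEB} at $u=v=1$, equivalently at $q=1$. Indeed, setting $u=v=1$ in the definition of $IE$ collapses the weight-graded pieces to the total dimension, so
\[
IE(M)\big|_{u=v=1}=\sum_{d}(-1)^{d}\dim IH^{d}_{c}(M)=I\chi_{c}(M).
\]
Since $M$ is a complex variety its real dimension is even, and Poincar\'{e}--Lefschetz duality for intersection cohomology identifies $IH^{d}_{c}(M)$ with the dual of $IH^{2\dim_{\CC}M-d}(M)$; as $2\dim_{\CC}M$ is even this yields $I\chi_{c}(M)=I\chi(M)$. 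Moreover $I\chi$ depends only on the intersection Betti numbers, which are topological invariants, so the homeomorphism $\MB\cong\MDol$ lets us read off $I\chi(M)$ from the Betti-side formulas. It therefore suffices to evaluate the two polynomials of \cref{thm:IEB} at $q=1$.

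For $G=\Sl_2$, observe that for $g\geq2$ both $(q^{2}-1)^{2g-2}$ and $(q-1)^{2g-2}$ vanish at $q=1$. Hence the first summand of $IE(\MB(C,\Sl_2))$ dies, the factor $(q-1)^{2g-2}$ drops out of the remaining two summands, and one is left with
\[
I\chi(M(C,\Sl_2))=\tfrac12\cdot 2\cdot 2^{2g-2}+2^{2g-1}\cdot 2^{2g-2}=2^{2g-2}+2^{4g-3}=2^{2g-2}\bigl(2^{2g-1}+1\bigr),
\]
where the first contribution comes from $(q^{2}+1)(q+1)^{2g-2}$ evaluated at $q=1$ and the second from $q^{2g-2}(q+1)^{2g-2}$.

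For $G=\PGl_2$ the same vanishing kills both the first summand (carrying $(q^{2}-1)^{2g-2}$) and the last summand (carrying $(q-1)^{2g-2}$), leaving only the middle term $\tfrac12 q^{2g-3}(q^{2}+q+1)(q+1)^{2g-2}$, which at $q=1$ equals $\tfrac12\cdot 3\cdot 2^{2g-2}=3\cdot 2^{2g-3}$, as claimed. There is no substantive obstacle here: the argument is a direct substitution, and the only point requiring care is the identification $I\chi_{c}=I\chi$ recorded above, which rests on Poincar\'{e} duality together with the evenness of the complex dimension.
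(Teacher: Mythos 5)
Your proof is correct and is exactly the argument the paper intends: the corollary follows from \cref{thm:IEB} by evaluating the intersection E-polynomials at $q=1$, with the identification $I\chi_c(M)=I\chi(M)$ supplied by Poincar\'{e} duality for middle-perversity intersection cohomology. The arithmetic (vanishing of the terms carrying $(q^2-1)^{2g-2}$ or $(q-1)^{2g-2}$, and the evaluation of the surviving terms) checks out in both the $\Sl_2$ and $\PGl_2$ cases.
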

We list the intersection E-polynomial of $\MB(C, \Sl_2)$ in low genus (we truncate the polynomial at degree $3g-3$: the coefficients of the monomials of higher degree can be determined by symmetry, since $IE(\MB(C, \Sl_2))$ is palindromic of degree $6g-6$):
\begin{align*}
    g=2: & \quad 1+17q^2+\cdots\\
    g=3: & \quad 1-4q^2+75q^4+384q^6+\cdots\\
    g=4: & \quad 1-6q^2+15q^4+243q^6+3875q^8+\cdots\\
    g=5: & \quad 1-8q^2+28q^4-56q^6+1103q^8+28672q^{10}+71848q^{12}+\cdots.
\end{align*} 
\begin{rmk}
The intersection E-polynomial of $\MB(C, \Sl_2)$ is a polynomial in $q^2$. This fails for twisted $\Sl_2$-character varieties (cf \cite[(2)]{Mereb2015}; see also \S\ref{sec:twistedcharacter} for the definition of the twist), but it holds true for twisted $\PGl_2$-character varieties, since their cohomology is generated by classes of weight $4$; see \cite[Proposition 4.1.8]{HauselVillegas15}. The E-polynomial of $\MB(C, \Sl_2)$ is a polynomial in $q^2$ too; see \cite[Theorem 2]{MartinezMunoz16} or \cite[Theorem 1.3]{BaragliaHekmati17}.
\end{rmk}

\begin{thm}\label{thm:IEDol} The intersection E-polynomial of $\MDol(C, \Sl_2)$ is
\begin{align*}
    IE( \MDol(C, \Sl_2))  & = E( \MDol(C, \Sl_2)^{\mathrm{sm}}) + \frac{1}{2}(uv)^g \left((1-u)^g(1-v)^g+(1+u)^g(1+v)^g\right)\\
    & + \frac{1}{2}(uv)^{g+1}(1-(uv)^{2g-4})\left(\frac{(1-u)^g(1-v)^g}{1-uv}-\frac{(1+u)^g(1+v)^g}{1+uv}\right)\\
    & +2^{2g}(uv)^{2g-2}.
\end{align*}
\end{thm}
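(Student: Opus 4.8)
The plan is to derive $IE(\MDol(C,\Sl_2))$ by specializing the master formula \eqref{eq:IE} of \cref{thm:IEmainformula} to the Dolbeault side, so the entire task reduces to computing the three inputs on the right-hand side: the E-polynomial $E(\MDol(C,\Sl_2))$ of the full singular Dolbeault space, the $\iota$-invariant and $\iota$-variant E-polynomials $E(\Sigma_\iota)^{\pm}$ of the double cover of the singular locus, and the E-polynomial $E(\Omega)$ of the deepest stratum. I would first identify these strata geometrically on the Dolbeault side using \cref{prop:singulMB}: for $\Sl_2$-Higgs bundles of degree zero, $\Sigma$ consists of polystable Higgs bundles that split as $L\oplus L^{-1}$ with $(L,\phi)$ a rank-one Higgs bundle, so $\Sigma$ is a quotient of $T^*\!\Jac(C)$ by the inversion involution $\iota\colon (L,\phi)\mapsto (L^{-1},-\phi)$, and $\Omega$ (its singular locus) is the fixed locus, indexed by the $2^{2g}$ points of order two in the Jacobian, each contributing a copy of $H^0(C,K)=\CC^g$, i.e. an affine space $\Aff^g$.

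Next I would assemble the three E-polynomials. For $\Omega$: being $2^{2g}$ copies of $\Aff^g$, $E(\Omega)=2^{2g}q^g$, and after multiplication by $q^{2g-2}$ this yields the clean term $2^{2g}(uv)^{2g-2}$ visible as the last summand of the target formula (using $q=uv$ and noting $q^{g}\cdot q^{2g-2}=q^{3g-3}$—I must check the exponent bookkeeping carefully here since the stated answer has $(uv)^{2g-2}$, indicating the $q^g$ from $\Omega$ combines differently, so the precise degree tracking through \eqref{eq:IE} is essential). For $\Sigma_\iota=T^*\!\Jac(C)$: its cohomology is that of $\Jac(C)\times\Aff^g$, and the involution $\iota$ acts on $H^*(\Jac(C))=\wedge^*H^1(C)$ by $(-1)$ on $H^1$, so it acts by $(-1)^k$ on $\wedge^k$; combined with the sign on the cotangent fibers, one computes $E(\Sigma_\iota)^{+}$ and $E(\Sigma_\iota)^{-}$ by splitting $(1-u)^g(1-v)^g=\sum_k\binom{g}{k}^2(\ldots)$-type expansions into even and odd parts, giving the symmetric and antisymmetric combinations $\tfrac12((1-u)^g(1-v)^g\pm(1+u)^g(1+v)^g)$ that appear in the target. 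Substituting these into the factor $q^2E(\Sigma_\iota)^{+}+qE(\Sigma_\iota)^{-}$ times $\frac{1-q^{2g-4}}{1-q^2}$ should reproduce the middle two lines of the claimed formula.

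The main obstacle I anticipate is the computation of $E(\MDol(C,\Sl_2))$ itself, the E-polynomial of the full singular moduli space, which enters \eqref{eq:IE} as the leading term and is by far the most intricate of the three inputs. Unlike $\Sigma_\iota$ and $\Omega$, which are essentially (quotients of) tori and affine spaces, the full Dolbeault space is a high-dimensional singular variety; I would obtain $E(\MDol(C,\Sl_2))$ either from the literature (the E-polynomial of the $\Sl_2$-Dolbeault space is computed via the same stratification or by comparison with the Betti side through the $E$-polynomial being a homeomorphism invariant when weights are pure), or by the same decomposition-theorem bookkeeping applied to the Kirwan--O'Grady resolution. In the statement, this term is kept implicit as $E(\MDol(C,\Sl_2)^{\mathrm{sm}})$ plus correction terms, so the genuine content of the theorem is showing that the difference $IE-E(\MDol^{\mathrm{sm}})$ equals exactly the stratum contributions; thus the real work is verifying that the intersection-cohomology correction produced by \eqref{eq:IE} matches the explicit closed form, which is a careful but mechanical algebraic identity once the stratum E-polynomials above are in hand. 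I would finish by substituting all pieces into \eqref{eq:IE}, simplifying with $q=uv$, and checking the result against the palindromicity already guaranteed by \cref{cor:palindromic} as a consistency test.
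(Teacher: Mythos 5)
Your overall strategy---specializing \eqref{eq:IE} to the Dolbeault side and feeding in the stratum E-polynomials---is exactly the paper's route, and your treatment of $\Sigma_{\iota} = \TJac$ is correct: the involution acts by $-1$ on $H^1(\Jac)$, hence by $(-1)^k$ on $\wedge^k H^1(\Jac)$, and this yields precisely \eqref{eq:ETJAC+} and \eqref{eq:ETJAC-}. However, there is a genuine error in your identification of $\Omega$. You claim $\Omega$ consists of the $2^{2g}$ two-torsion points of $\Jac$, ``each contributing a copy of $H^0(C,K)=\CC^g$''. This is false: the involution $\iota$ acts on $\TJac$ by $(L,\phi)\mapsto(L^{-1},-\phi)$, so a fixed point must satisfy both $L=L^{-1}$ \emph{and} $\phi=-\phi$, i.e.\ $\phi=0$. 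Hence $\Omega(C,\Sl_2)$ is a set of $2^{2g}$ points (this is exactly what \cref{prop:singulMB} states), and $E(\Omega)=2^{2g}$, not $2^{2g}(uv)^g$. With your value, the last term of \eqref{eq:IE} would come out as $2^{2g}q^{3g-2}$ instead of the correct $2^{2g}q^{2g-2}$. The exponent discrepancy you flag (``the $q^g$ from $\Omega$ combines differently'') is not a bookkeeping subtlety to be absorbed elsewhere; it is the symptom of this misidentification, and your proposal as written does not resolve it.

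The second issue is a misplaced difficulty rather than an error: you present the computation of $E(\MDol(C,\Sl_2))$ as the main obstacle, to be imported from the literature or re-derived via the decomposition theorem. No closed form for $E(\MDol)$ is needed at all. Since the theorem is stated in terms of $E(\MDol(C,\Sl_2)^{\mathrm{sm}})$, one only needs additivity of E-polynomials over the stratification $M = M^{\mathrm{sm}}\sqcup\Sigma$ together with $E(\Sigma)=E(\Sigma_{\iota})^+$ (because $\Sigma = \Sigma_{\iota}/(\ZZ/2\ZZ)$), i.e.\ $E(M)=E(M^{\mathrm{sm}})+E(\Sigma_{\iota})^+$. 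Substituting this identity, \eqref{eq:ETJAC+}, \eqref{eq:ETJAC-} and $E(\Omega)=2^{2g}$ into \eqref{eq:IE} reproduces the claimed formula directly: the term $E(\Sigma_{\iota})^+$ gives the second summand, and $(q^2E(\Sigma_{\iota})^+ + qE(\Sigma_{\iota})^-)\cdot\frac{1-q^{2g-4}}{1-q^2}$ simplifies to the third. This two-line substitution is the entirety of the paper's proof; once you correct $E(\Omega)$, your plan collapses to it.
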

An explicit formula for the E-polynomial of the smooth locus of $\MDol(C, \Sl_2)$ was computed in \cite[Theorem 3.7]{KiemYoo08}. Together with \cref{thm:IEDol}, this gives the intersection Poincar\'{e} polynomial of $M(C, \Sl_2)$.
\begin{thm}\label{thm:IP} The intersection Poincar\'{e} polynomial of $M(C, \Sl_2)$ is
\begin{align*}
     IP_t(M(C, \Sl_2)) & = \frac{(t^3+1)^{2g}}{(t^2-1)(t^4-1)} + (g-1)t^{4g-3}\frac{(t+1)^{2g-2}}{t-1}\\ 
     & - \frac{t^{4g-4}}{4(t^2-1)(t^4-1)}\big( (t^2+1)^2(t+1)^{2g}-(t+1)^4(t-1)^{2g}
     \big)\\
     & + \frac{1}{2} t^{4g-4}((t+1)^{2g-2}-(t-1)^{2g-2})- \frac{1}{2} t^{4g-6}((t+1)^{2g}-(t-1)^{2g})\\
     & + \frac{1}{2}(2^{2g} -1) t^{4g-4}((t+1)^{2g-2}+(t-1)^{2g-2}).
\end{align*}

Therefore, we have
\begin{equation}\label{TaylorIPt}
    IP_t(M(C, \Sl_2)) = \frac{(t^3+1)^{2g}}{(t^2-1)(t^4-1)}-2g \cdot t^{4g-5}+O(t^{4g-4}).
\end{equation}
\end{thm}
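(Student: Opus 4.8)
The plan is to read off $IP_t(M(C,\Sl_2))$ from the intersection $E$-polynomial of the \emph{Dolbeault} incarnation of $M$, exploiting that $\MDol$ carries a pure Hodge structure whereas $\MB$ does not. First I would make \cref{thm:IEDol} completely explicit by inserting the formula for $E(\MDol(C,\Sl_2)^{\mathrm{sm}})$ of \cite[Theorem 3.7]{KiemYoo08} (the expression recorded in the commented display just before \cref{thm:IEDol}) into its right-hand side, obtaining $IE(\MDol(C,\Sl_2))$ as an explicit element of $\QQ[u,v]$.

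The conceptual heart is a purity statement: the mixed Hodge structure on $IH^k(\MDol(C,\Sl_2))$ is pure of weight $k$ for every $k$. Indeed $\MDol$ is semiprojective for the $\Gm$-action scaling the Higgs field, and the intersection cohomology of a semiprojective variety is pure; equivalently, \cref{introthm:decompositiontheorempiT} realizes $IH^*(M)$ as a direct summand of $H^*(T)$, where $T$ is the (smooth, $\Gm$-equivariant, hence semiprojective) Kirwan--O'Grady desingularization, whose cohomology is pure of weight equal to the degree, and a sub-Hodge-structure of a pure one is pure. Poincar\'e duality $IH^k_c \cong (IH^{2n-k})^\vee(-n)$, with $n=\dim_{\CC}M=6g-6$, then transports purity to $IH^k_c(\MDol)$. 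Granting this, in the defining sum for $IE$ the graded piece $\Gr^W_k IH^k_c$ is all of $IH^k_c$, so setting $u=v=-t$ collapses every monomial $u^rv^s$ (with $r+s=k$) onto $(-1)^kt^k$ and cancels the sign $(-1)^k$, yielding $IE(\MDol(C,\Sl_2))(-t,-t)=\sum_k \dim IH^k_c(\MDol)\,t^k$. Using the duality once more to reverse degrees gives the substitution
\[
 IP_t(M(C,\Sl_2)) \;=\; t^{12g-12}\, IE(\MDol(C,\Sl_2))(-t^{-1},-t^{-1}).
\]

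It then remains to perform this substitution on the explicit polynomial from the first step and simplify. Under $u=v=-t^{-1}$ one has $uv\mapsto t^{-2}$ and $(1\mp u)(1\mp v)\mapsto(1\pm t^{-1})^2$, so each factor acquires the shape appearing in \cref{thm:IP}; multiplying by $t^{12g-12}$ and clearing denominators should collapse the sum to the stated closed form. A useful internal check is that the monomials of total degree $<6g-6$ occurring in the individual summands of $IE$ (for instance the term $2^{2g}(uv)^{2g-2}$, which by purity cannot survive in a weight-$k$ piece with $k\geq 6g-6$) must cancel in the total, equivalently that all $t$-powers exceeding the top degree $6g-6$ of $IP_t$ vanish. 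Finally, \eqref{TaylorIPt} follows by Taylor expanding the closed form: the stable term is $(t^3+1)^{2g}/((t^2-1)(t^4-1))$, and the lowest correction comes from $-\tfrac12 t^{4g-6}((t+1)^{2g}-(t-1)^{2g})$, whose leading monomial is $-2g\,t^{4g-5}$.

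I expect the main obstacle to be twofold: justifying purity of the intersection cohomology of the \emph{singular} space $\MDol$ with enough precision to license the weight-equals-degree substitution (as opposed to purity for the smooth locus or for $T$ alone), and the bookkeeping in the final simplification, where the cancellation of the sub-middle-degree contributions must be tracked so that the answer lands exactly on the compact closed form rather than on a mere rational function.
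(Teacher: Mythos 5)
Your proposal is correct and follows essentially the same route as the paper: purity of $IH^*(\MDol)$ for the semiprojective variety $\MDol$ (\cref{prop:purityandother}), Poincar\'{e} duality giving $IP_t(M) = t^{2\dim M}\, IE(\MDol; -t^{-1},-t^{-1})$, and then substitution of the explicit formula of \cref{thm:IEDol} (with the Kiem--Yoo expression for $E(\MDol^{\mathrm{sm}})$) followed by algebraic simplification. Your identification of the lowest-order correction $-2g\,t^{4g-5}$ coming from the term $-\tfrac{1}{2}t^{4g-6}((t+1)^{2g}-(t-1)^{2g})$ also matches the expansion \eqref{TaylorIPt}.
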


We list the intersection Poincar\'{e} polynomial of $\MB(C, \Sl_2)$ in low genus:
\begin{align*}
    g=2: \quad & 1+t^2+17 t^4+17t^6\\
    g=3: \quad & 1+t^2+6 t^3+2 t^4+6 t^5+17 t^6+6 t^7+81 t^8+12 t^9+396 t^{10}+6t^{11}+66 t^{12}\\
    g=4: \quad & 1+t^2+8 t^3+2 t^4+8 t^5+30 t^6+16 t^7+31 t^8+72 t^9+59 t^{10}+72
t^{11}+385 t^{12}\\
&+80 t^{13}+3955 t^{14}+80 t^{15}+3885 t^{16}+16
t^{17}+259t^{18}\\
    g=5: \quad & 1+t^2+10 t^3+2 t^4+10 t^5+47 t^6+20 t^7+48 t^8+140 t^9+93 t^{10}+150t^{11}\\
    & +304 t^{12} +270 t^{13}+349 t^{14}+522 t^{15}+1583 t^{16}+532t^{17}+29414 t^{18}+532 t^{19}\\
    & +72170 t^{20}+280 t^{21}+28784t^{22}+30 t^{23}+1028 t^{24}.
\end{align*}

The Poincar\'{e} polynomial of $\MB(C, \Sl_2)$ was obtained in \cite[Theorem 1.5]{DaskalopoulosWentworth10}, which however contains small transcription errors (cf \cite[Theorem 2.2]{CappellLeeMiller00} and \cite[(47)]{DWWW11}).
\begin{align*}
    P_t(M(C, \Sl_2))& = \frac{(t^3+1)^{2 g}}{(t^2-1) (t^4-1)} + \frac{(t+1)^{2 g} (t^2+1) + (t-1)^{2 g} (t^2-1)}{2 (  t^4-1)}\\
    & + \sum^g_{k=2} \left\{ \binom{2g}{k} - \binom{2g}{k-2} \right\} t^{k + 2 \Mod[k, 2]} \frac{(t^{2 k - 2 \Mod[k, 2]}-1) ( 
     t^{2 g - 2 k + 2}-1)}{(t-1) (t^4-1)}\\
     & - \frac{1}{2} t ((t+1)^{2 g} + (t-1)^{2 g})+\frac{t^{2 g + 2}-1}{t-1} -t^{4 g - 4} + \frac{(t-1)^{2 g}t^{4 g - 4}}{4 (t^2+1)} \\
     & - \frac{(t+1)^{2 g} t^{4 g - 4}}{2 (t^2-1)}\bigg(\frac{2 g}{t+1} + \frac{1}{t^2-1} - 
  \frac{1}{2} + 3 - 2 g \bigg)\\
   & + \frac{1}{2}(2^{2 g} - 1)
  t^{4 g - 4} ((t+1)^{2 g - 2} + (t-1)^{2 g - 2} - 2),
\end{align*}
where $\Mod[k, l]$ is the reminder on division of $k$ by $l$. We can then inspect the difference $IP_t(M(C, \Sl_2))-P_t(M(C, \Sl_2))$ in low genus: 
\begin{align*}
    g=2: \quad & 16 t^4\\
    g=3: \quad & 6 t^3+t^4+6 t^5+t^6+6 t^7+79 t^8+t^{10}\\
    g=4: \quad & 8 t^3+t^4+8 t^5-20 t^6+16 t^7-19 t^8+22 t^9+56 t^{10}+56 t^{11}+327 t^{12}\\
    & +8 t^{13}+28 t^{14}+t^{16}\\
    g=5: \quad & 10 t^3 + t^4 + 10 t^5 - 65 t^6 + 20 t^7 - 196 t^8 - 35 t^9 - 
 20 t^{10} - 25 t^{11} + 124 t^{12} \\
 & + 240 t^{13} + 256 t^{14} + 262 t^{15} + 1279 t^{16} + 120 t^{17} + 211 t^{18} + 10 t^{19} + 45 t^{20} + t^{22}.
\end{align*}

\begin{cor}\label{cor:IP-P} Let $C$ be a curve of genus $g \geq 6$. Then we have
 \[IP_t(M)-P_t(M) = 2g \cdot t^3 +t^4 +2g \cdot t^5 - \left\lbrace \binom{2g}{3}-\binom{2g}{2}-2g\right\rbrace t^6 + O(t^7). \]
\end{cor}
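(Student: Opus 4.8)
The plan is to prove the corollary by directly comparing the two closed formulas already at our disposal: the expression for $IP_t(M)$ in \cref{thm:IP} and the Daskalopoulos--Wentworth formula for $P_t(M)$ recorded above. Both are polynomials in $t$, and the statement only concerns the coefficients of $t^3,\dots,t^6$, so I would work throughout modulo $t^7$. The key structural observation is that the two formulas share the identical leading term $\frac{(t^3+1)^{2g}}{(t^2-1)(t^4-1)}$, which therefore cancels in the difference $IP_t(M)-P_t(M)$; only the respective correction terms survive.

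First I would expand $IP_t(M)$ modulo $t^7$. For $g\geq 6$ every correction term in \cref{thm:IP} carries a factor $t^{4g-c}$ with $c\in\{3,4,5,6\}$, hence has valuation at least $4g-6\geq 18$ and drops out modulo $t^7$; only the leading term contributes. Expanding $\frac{1}{(1-t^2)(1-t^4)}=1+t^2+2t^4+2t^6+O(t^7)$ against $(1+t^3)^{2g}=1+2g\,t^3+\binom{2g}{2}t^6+O(t^7)$ gives
\[ IP_t(M)\equiv 1+t^2+2g\,t^3+2t^4+2g\,t^5+\Big(\tbinom{2g}{2}+2\Big)t^6 \pmod{t^7}, \]
which one can verify against the tabulated low-genus polynomials (the coefficient $\binom{2g}{2}+2$ of $t^6$ reads $17,30,47$ at $g=3,4,5$).

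The heart of the argument is the corresponding reduction of the Daskalopoulos--Wentworth formula. Since the leading term cancels, $IP_t(M)-P_t(M)$ equals, modulo $t^7$, the negative of the sum of the remaining summands of $P_t(M)$. Again every term with a factor $t^{4g-4}$ dies modulo $t^7$ for $g\geq 6$, so only four contributions remain: the second summand $\frac{(t+1)^{2g}(t^2+1)+(t-1)^{2g}(t^2-1)}{2(t^4-1)}$, the polynomial $\frac{t^{2g+2}-1}{t-1}$, the term $-\tfrac12 t\big((t+1)^{2g}+(t-1)^{2g}\big)$, and the finite sum $\sum_{k=2}^g$. In that sum the $k$-th summand has valuation $k+2\Mod[k,2]$, so only $k\in\{2,3,4,6\}$ reach degree $\leq 6$; here is precisely where I would invoke $g\geq 6$, which guarantees that $k=6\leq g$ is a genuine term occurring with its generic weight $\binom{2g}{k}-\binom{2g}{k-2}$ (for $g<6$ the index $k=6$ is absent and the computation would need separate low-genus bookkeeping). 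Expanding the rational factors $\frac{t^{2k-2\Mod[k,2]}-1}{t^4-1}$ and $\frac{t^{2g-2k+2}-1}{t-1}$ as power series, multiplying by the prefactor $t^{k+2\Mod[k,2]}$, and summing the four contributions, I expect the coefficients of $t^0,t^1,t^2$ to cancel outright and the survivors to collapse to
\[ -2g\,t^3-t^4-2g\,t^5+\Big(\tbinom{2g}{3}-\tbinom{2g}{2}-2g\Big)t^6 \pmod{t^7}; \]
negating this yields the asserted identity.

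The main obstacle is exactly this last collapse. The degree-$6$ coefficient is assembled from five distinct binomials $\binom{2g}{2},\dots,\binom{2g}{6}$ coming from the four surviving pieces, and the delicate point is to show that they telescope—via repeated use of Pascal's rule and the shape $\binom{2g}{k}-\binom{2g}{k-2}$—to the single expression $\binom{2g}{3}-\binom{2g}{2}-2g$, while the coefficients of $t^3,t^4,t^5$ reduce to $-2g,-1,-2g$ and those of $t^0,t^1,t^2$ vanish. A convenient sanity check at each step is the explicit list of $IP_t(M)-P_t(M)$ in genus $3,4,5$ printed above, whose coefficients of $t^3,\dots,t^6$ already agree with the claimed formula.
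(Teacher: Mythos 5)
Your overall strategy is exactly the one the paper intends: \cref{cor:IP-P} is stated without proof and is meant to follow from comparing \cref{thm:IP} with the displayed Poincar\'{e} polynomial, working modulo $t^7$. Your treatment of the intersection side is correct: for $g\geq 6$ every correction term in \cref{thm:IP} has valuation at least $4g-5>6$, and the expansion $IP_t(M)\equiv 1+t^2+2g\,t^3+2t^4+2g\,t^5+\bigl(\binom{2g}{2}+2\bigr)t^6 \pmod{t^7}$ is right (it matches the genus $3,4,5$ tables). Your bookkeeping of which pieces of the $P_t$ formula survive modulo $t^7$, and of why $g\geq 6$ is needed to see the $k=6$ summand, is also correct.

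The gap is the decisive step, which you defer: ``I expect the coefficients of $t^0,t^1,t^2$ to cancel outright and the survivors to collapse to\dots''. That expectation is the entire content of the corollary, and for the formula for $P_t(M)$ \emph{as recorded in the paper} it is false. Carrying out the expansion of the four surviving pieces: the leading term and $\frac{t^{2g+2}-1}{t-1}$ each contribute $1$ to the constant term, so the recorded formula gives $P_t(0)=2$; in degree one the second summand contributes $-2g$, the term $-\tfrac12 t\bigl((t+1)^{2g}+(t-1)^{2g}\bigr)$ contributes $-1$, and $\frac{t^{2g+2}-1}{t-1}$ contributes $+1$, for a total $t$-coefficient of $-2g$; in degree two one gets $\binom{2g}{2}$ instead of $1$; and in degree six the $k=6$ summand leaves an uncancelled $\binom{2g}{6}-\binom{2g}{4}+\binom{2g}{2}$ on top of the required $\binom{2g}{3}-2g+2$. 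None of this is compatible with $P_0(M)=1$, $P_1(M)=0$, nor with the claimed collapse; a faithful execution of your plan on these inputs would ``refute'' the corollary rather than prove it. The explanation is that the recorded Daskalopoulos--Wentworth formula itself contains transcription errors (it is not even a candidate Poincar\'{e} polynomial), while the low-genus tables of $IP_t-P_t$ in the paper, which do agree with the corollary, were evidently produced from a correct version of the Cappell--Lee--Miller/DWWW formula. So your proof can only be completed after first replacing the recorded $P_t$ by the correct closed formula; and note that the sanity check you promise against the genus $3,4,5$ tables cannot actually have been performed, since it fails already in degree $0$.
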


At this point it is worthy to recall how the (intersection) cohomology of $M(C, G)$ with $G=\Gl_n, \Sl_n$ and $\PGl_n$ compare, and how to extend the previous results for the $\Sl_2$ case to $\PGl_2$ and $\Gl_2$.

The morphism 
\begin{equation}\label{albmap}
    \alb\colon  M(C, \Gl_n) \to M(C, \Gl_1)
\end{equation}
sends a representation or a Higgs bundle to its associated determinant representation or Higgs bundle. It is an \'{e}tale locally trivial fibration with monodromy group $\Gamma\simeq (\ZZ/2\ZZ)^{2g}$ and fibre isomorphic to $M(C, \Sl_n)$. The quotient of $M(C, \Sl_n)$ for the residual action of $\Gamma$ is $M(C, \PGl_n)$. Hence, there exist morphisms of mixed Hodge structures
\begin{align}
H^*(M(C, \Gl_n)) & \simeq H^*(M(C, \Sl_n))^{\Gamma} \otimes H^*(M(C, \Gl_1)) \label{eq:GLtoSL},\\
H^*(M(C, \PGl_n)) & \simeq H^*(M(C, \Sl_n))^{\Gamma}. \label{eq:PGLtoSL}
\end{align}
Analogous splittings hold for $IH^*$, $H_c^*$, $IH^*_c$. A proof of these facts can be found for instance in \cite[\S 3.2]{FelisettiMauri2020}. 

 The analogues of Theorems \ref{thm:IEDol} and \ref{thm:IP} for $\PGl_2$ can be obtained by substituting all the occurrence of the coefficient $2^{2g}$ with $1$ in the formulas of the theorems; see \cref{rmk:replace2gwith1}. According to \eqref{eq:GLtoSL}, the corresponding polynomials for $\Gl_2$ are the product of the E-polynomials or the (intersection) Poincar\'{e} polynomial for $\PGl_2$ with $E(\TJac)=(uv)^g(1-u)^g(1-v)^g$ or $P_t(\Jac)=(t+1)^{2g}$ respectively.

Here, as a corollary of Theorems \ref{thm:IEB}, \ref{thm:IP} and \eqref{eq:PGLtoSL}, we study the portion of $IH^*(M(C, \Sl_2))$ on which $\Gamma$ acts non-trivially. The following should be considered the untwisted analogue of \cite[Proposition 8.2]{HauselThaddeus03} in rank 2. This suggests that intersection cohomology may be the right cohomology theory to formulate a topological mirror symmetry conjecture for $M(C, \Sl_n)$ and $M(C, \PGl_n)$; see \cite[Remark 3.30]{Hausel13} and \cite{Mauri20II}.

\begin{cor}\label{thm: varinvarIE}
The variant intersection E-polynomial and Poincar\'{e} polynomials for the action of $\Gamma$ are
\begin{align*}
IE_{\mathrm{var}}(\MB(C, \Sl_2))  = & \frac{1}{2} (2^{2g}-1) q^{2g-2} ((q+1)^{2g-2}+(q-1)^{2g-2}),\\
IE_{\mathrm{var}}(\MDol(C, \Sl_2))  = & \frac{1}{2} (2^{2g}-1) (uv)^{3g-3} ((u+1)^{g-1}(v+1)^{g-1}\\
& +(u-1)^{g-1}(v-1)^{g-1}),\\
IP_{t, \mathrm{var}}(M(C, \Sl_2))  = & \frac{1}{2}(2^{2g}-1) t^{4g-4}((t+1)^{2g-2}+(t-1)^{2g-2}).
\end{align*}
In particular, $q^{-2g+2}IE_{\mathrm{var}}(\MB(C, \Sl_2))$ and $q^{-6g+6}IE_{\mathrm{var}}(\MDol(C, \Sl_2);q,q)$ are palindromic polynomials of degree $2g-2$.
\end{cor}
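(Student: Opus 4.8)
The plan is to obtain all three variant polynomials as differences between the $\Sl_2$ and $\PGl_2$ invariants, using the isomorphism $IH^*(M(C,\PGl_2))\simeq IH^*(M(C,\Sl_2))^{\Gamma}$ of \eqref{eq:PGLtoSL} (and its compactly supported analogue). Since the variant part is by definition the $\Gamma$-invariant complement of the invariant part, this identifies $IE(M(C,\PGl_2))$ with the $\Gamma$-invariant intersection E-polynomial $IE(M(C,\Sl_2))^{\Gamma}$, and likewise for $IP_t$. Hence
\[
IE_{\mathrm{var}}(M(C,\Sl_2)) = IE(M(C,\Sl_2)) - IE(M(C,\PGl_2)),
\]
and the whole statement reduces to subtracting the explicit $\PGl_2$ formula from the explicit $\Sl_2$ formula in each of the Betti, Dolbeault, and Poincaré settings.

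For the Betti case I would subtract the two expressions of \cref{thm:IEB} directly. The common summand $(q^{2g-2}+1)(q^2-1)^{2g-2}$ cancels, and after regrouping the remainder by the factors $(q+1)^{2g-2}$ and $(q-1)^{2g-2}$, the coefficient of each becomes $\tfrac12 q^{2g-3}\bigl[(q^2+1)-(q^2\pm q+1)\bigr]+2^{2g-1}q^{2g-2}=-\tfrac12 q^{2g-2}+2^{2g-1}q^{2g-2}=\tfrac12(2^{2g}-1)q^{2g-2}$, which yields the claimed formula for $IE_{\mathrm{var}}(\MB(C,\Sl_2))$.

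For the Dolbeault and Poincaré cases I would instead invoke \cref{rmk:replace2gwith1}: the $\PGl_2$ intersection E-polynomial and intersection Poincaré polynomial are obtained from their $\Sl_2$ counterparts by replacing every occurrence of $2^{2g}$ by $1$. As these formulas are affine-linear in the symbol $2^{2g}$, the difference equals $(2^{2g}-1)$ times the coefficient of $2^{2g}$. For the Poincaré polynomial that coefficient is visibly the last line $\tfrac12 t^{4g-4}((t+1)^{2g-2}+(t-1)^{2g-2})$ of \cref{thm:IP}, giving $IP_{t,\mathrm{var}}$ at once. For the Dolbeault E-polynomial I would collect the coefficient of $2^{2g}$ from the summand $2^{2g}(uv)^{2g-2}$ of \cref{thm:IEDol} together with all $2^{2g}$-terms hidden inside $E(\MDol(C,\Sl_2)^{\mathrm{sm}})$. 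The rational contributions (those with denominator $1-uv$) sum, using $1-x^{2g-2}=(1-x^{g-1})(1+x^{g-1})$ with $x=uv$, to $(uv)^{4g-4}-(uv)^{2g-2}$; these exactly cancel the two monomial terms $-(uv)^{4g-4}$ and $+(uv)^{2g-2}$, leaving the polynomial $\tfrac12(uv)^{3g-3}\bigl((1-u)^{g-1}(1-v)^{g-1}+(1+u)^{g-1}(1+v)^{g-1}\bigr)$, which equals the stated expression since $(u-1)(v-1)=(1-u)(1-v)$.

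Finally, for the palindromicity assertions I would factor out the leading monomial. Setting $u=v=q$ turns $(u\pm1)^{g-1}(v\pm1)^{g-1}$ into $(q\pm1)^{2g-2}$ and $(uv)^{3g-3}$ into $q^{6g-6}$, so both $q^{-2g+2}IE_{\mathrm{var}}(\MB)$ and $q^{-6g+6}IE_{\mathrm{var}}(\MDol;q,q)$ equal $\tfrac12(2^{2g}-1)((q+1)^{2g-2}+(q-1)^{2g-2})$. Writing $(q+1)^{2g-2}+(q-1)^{2g-2}=2\sum_{k\ \mathrm{even}}\binom{2g-2}{k}q^{k}$ exhibits this as an even polynomial of degree $2g-2$ whose palindromicity is the symmetry $\binom{2g-2}{k}=\binom{2g-2}{2g-2-k}$. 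I expect the only genuinely laborious step to be isolating and cancelling the $2^{2g}$-coefficient inside the smooth-locus E-polynomial in the Dolbeault case; every other step is a short regrouping.
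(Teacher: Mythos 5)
Your proposal is correct and matches the paper's intended derivation: the paper states this corollary as a consequence of Theorems \ref{thm:IEB}, \ref{thm:IP} and \eqref{eq:PGLtoSL}, i.e.\ precisely your identification $IE_{\mathrm{var}}(M(C,\Sl_2)) = IE(M(C,\Sl_2)) - IE(M(C,\PGl_2))$, with the Betti case done by direct subtraction in \cref{thm:IEB} and the Dolbeault/Poincar\'{e} cases via the replace-$2^{2g}$-by-$1$ rule of \cref{rmk:replace2gwith1}. Your bookkeeping of the $2^{2g}$-terms hidden in $E(\MDol(C,\Sl_2)^{\mathrm{sm}})$, including the cancellation of $(uv)^{4g-4}-(uv)^{2g-2}$ against the monomial terms, is the correct (and only laborious) step, and your palindromicity argument via $\binom{2g-2}{k}=\binom{2g-2}{2g-2-k}$ is sound.
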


As a byproduct, we also obtain the E-polynomials and the Poincar\'{e} polynomial of the Kirwan--O'Grady desingularization $\pi_T\colon T \to M$. We write explicitly the E-polynomials of $M(C, \Sl_2)$, and leave the straightforward computations of the other polynomials to the reader (cf \S \ref{sec:SLtoPGL} and \S\ref{sec:semiproj}).

\begin{thm}\label{thm:EDolT} The E-polynomials of $T(C, \Sl_2)$ are
\begin{align*}
    E(  T_{\mathrm{B}}(C,  \Sl_2))  & = (q^{2g-2}+1)(q^2-1)^{2g-2}+\frac{1}{2}q^{2g-3}(q^2+1)((q+1)^{2g-2}\\ 
    & -(q-1)^{2g-2}) +2^{2g-1}q^{2g-2}((q+1)^{2g-2}+(q-1)^{2g-2})\\
    & + \frac{1}{2} q((1+q)^{2g-1}(1+q^{2g-3}) +(1-q)^{2g-1}(1-q^{2g-3}))\frac{1-q^{2g-3}}{1-q}\\
    & + 2^{2g}\frac{q}{(1-q)^3(1-q^2)} ( 2 - q - q^3  - q^{2g-4} - 2 q^{2g-2} + 
  q^{2g-1}   \\
  & - 2 q^{2g} + 4 q^{2g+1} - q^{2g+2} + q^{2g+3} + q^{4g-6} - q^{4g-5} + 4 q^{4g-4}  \\
  & - 2 q^{4g-3} + 
  q^{4g-2} - 2 q^{4g-1} - q^{4g+1} - q^{6g-6} - 
  q^{6g-4} + 2 q^{6g-3}),
  \end{align*}
  \begin{align*}
    E( \TDol(C,  \Sl_2)) & = E( \MDol(C, \Sl_2)^{\mathrm{sm}})+ \\
    & + \frac{1}{2}(uv)^g \frac{(1-(uv)^{2g-2})}{(1-uv)}\bigg( \frac{(1-u)^g(1-v)^g(1-(uv)^{2g-3})}{1-uv} \\
    & + \frac{(1+u)^g(1+v)^g(1+(uv)^{2g-3})}{1+uv}\bigg)-2^{2g} \frac{(1-(uv)^{2g-2})^2}{(1-(uv)^2)(1-uv)}\\
    & + 2^{2g}\frac{\left(1-(uv)^{2g-2}\right) \left(1-(uv)^{2g}\right)}{\left(1-uv\right)^3 
 \left(1-(uv)^2\right)}(1-(uv)^4 -(uv)^{2
 g-3}\\
 & -(uv)^{2g-1}+2(uv)^{2g}).
\end{align*}
In particular, the E-polynomial $E(T_B)$ is palindromic.
\end{thm}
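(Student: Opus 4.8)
The final theorem to prove is \cref{thm:EDolT}, which gives explicit formulas for $E(T_{\mathrm{B}}(C,\Sl_2))$ and $E(T_{\mathrm{Dol}}(C,\Sl_2))$, where $T$ is the Kirwan–O'Grady desingularization $\pi_T\colon T \to M$. And in particular asserts that $E(T_B)$ is palindromic.

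**Strategy.** The key tool is the decomposition theorem for $\pi_T$ (Theorem \ref{introthm:decompositiontheorempiT}). Since $\pi_T$ is a resolution, $E(T)$ can be computed from the E-polynomials of the various summands appearing in the decomposition theorem. The decomposition theorem gives:
- $IC_M$ contributes $IE(M)$
- The summands supported on $\Sigma$ (singular locus of $M$) contribute terms involving $E(\Sigma_\iota)$ and $E(\Omega)$
- The summands supported on $\Omega$ (singular locus of $\Sigma$) contribute terms with coefficients $b(j)$

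**Plan.** Since $\pi_T$ is a resolution, $E(T)$ can be computed directly using the additivity of E-polynomials and the decomposition theorem. I would organize this as follows.

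First, I would note that $E(T) = \sum_i (-1)^i E(\text{summand}_i)$ where the sum runs over all summands in the decomposition theorem. The E-polynomial of a complex $R\pi_{T,*}\mathbb{Q}[\dim T]$ equals (up to sign conventions involving the shift $[\dim T]$) the E-polynomial $E(T)$, since E-polynomials of pushforwards compute the E-polynomial of the source. Thus I would set up the identity $E(T) = IE(M) + (\text{correction terms from } \Sigma) + (\text{correction terms from } \Omega)$.

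Second, for each family of summands I would compute the E-polynomial contribution using the known E-polynomials of the strata. For the $IC_M$ term, I already have $IE(M)$ from \cref{thm:IEmainformula}. For the $\mathbb{Q}_\Sigma$ and $i_{\Sigma^\circ,*}\mathscr{L}$ summands, I would sum a geometric series in the shift index $i$ (the exponents $\lceil (2g-3-|i|)/2 \rceil$ and $\lfloor (2g-3-|i|)/2 \rfloor$) weighted by the appropriate Tate twists $(-2g+3-i)$ which contribute powers of $q = uv$. Summing these over $i$ from $-2g+4$ to $2g-4$ produces the geometric-series factors like $(1-q^{2g-4})/(1-q^2)$ visible in both \cref{thm:IEmainformula} and the target formulas. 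For the $\mathbb{Q}_\Omega$ summands with coefficients $b(j)$, I would recognize that $\sum_j b(j) q^j$ is precisely the polynomial displayed in Theorem \ref{introthm:decompositiontheorempiT}, so the $\Omega$-contribution is $E(\Omega)$ times that generating polynomial (with an overall Tate twist).

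**The main obstacle.** The hardest part will be the bookkeeping of the Tate twists and shifts. Each summand carries both a cohomological shift $[\dim \Sigma - 2i]$ (or $[\dim\Omega - 2j]$) and a Hodge–Tate twist $(-2g+3-i)$, and I must translate these correctly into the $(u,v,q)$-variables of the E-polynomial, keeping track of signs from the odd shifts. In particular I need the E-polynomials $E(\Sigma_\iota)^{\pm}$ and $E(\Omega)$ in explicit closed form — these come from \cref{prop:singulMB} and the stratification of $M$ — and then verify that after summing the geometric series the $IE(M)$ terms combine with the desingularization corrections to yield exactly the stated closed formula. Concretely, I expect $E(T) - IE(M)$ to equal the difference between the full alternating sum over summands and the single $IC_M$ term; comparing with \cref{thm:IEmainformula} this difference replaces the truncated geometric factor $(1-q^{2g-4})/(1-q^2)$ by the full series and adds the $\Omega$-corrections, accounting for the extra terms in $E(T)$ beyond $IE(M)$.

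**Palindromicity.** Finally, for the assertion that $E(T_B)$ is palindromic: since $T$ is smooth and, by the semiprojectivity of the Betti moduli space and its desingularization (cf. \S\ref{sec:semiproj}), $E(T_B)$ should satisfy Poincaré duality in the appropriate form. Alternatively, I would verify palindromicity directly by checking the functional equation $q^{6g-6} E(T_B)(1/q) = E(T_B)(q)$ on the closed formula, term by term. The cleanest route is to invoke that $T_B$ is a smooth semiprojective variety whose cohomology is pure (Hodge–Tate type), so that Poincaré duality forces the E-polynomial to be palindromic of degree $2\dim_{\mathbb{C}} T = 6g-6$; this parallels \cref{cor:palindromic} for $IE(M_B)$ and requires only that the mixed Hodge structure on $H^*_c(T_B)$ be of Hodge–Tate type, which follows from the explicit computation exhibiting $E(T_B)$ as a polynomial in $q$ alone.
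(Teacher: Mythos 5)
Your derivation of the two formulas is essentially correct, but it is not the paper's route. The paper never applies the decomposition theorem at this point: it computes $E(T)$ by plain additivity over the strata of the resolution,
\[E(T)=E(M^{\mathrm{sm}})+E(D_1)+E(D_2^{\circ})+E(D_3)-E(D_{13}),\]
feeding in $E(D_2^{\circ})$ from \eqref{eq:Ecirc} together with \eqref{eq:C+}--\eqref{eq:ETJAC-}, and $E(D_1)+E(D_3)-E(D_{13})$ from \cref{prop;HodgeTateI}. You instead invert \cref{introthm:decompositiontheorempiT}, writing
\[E(T)=IE(M)+E(\Sigma_\iota)^+\, q\tfrac{(1-q^{2g-3})(1-q^{2g-2})}{(1-q)(1-q^2)}+E(\Sigma_\iota)^-\, q^2\tfrac{(1-q^{2g-4})(1-q^{2g-3})}{(1-q)(1-q^2)}+E(\Omega)\sum_j b(j)q^{3g-3+j},\]
with $IE(M)$ imported from \cref{thm:IEB} and \cref{thm:IEDol}. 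This is legitimate and non-circular: those theorems are proved by eliminating $E(T)$ between the decomposition-theorem identity and the additivity identity, never using the present statement. It also explains the printed shape of the Betti formula: its first two lines are exactly $IE(\MB(C,\Sl_2))$, the next line is exactly your $\Sigma$-correction (after the identity $E(\Sigma_\iota)^+(1-q^{2g-2})+qE(\Sigma_\iota)^-(1-q^{2g-4})=\tfrac12(1-q)^{2g}(1+q)(1-q^{2g-3})+\tfrac12(1+q)^{2g}(1-q)(1+q^{2g-3})$), and the remaining term is $2^{2g}$ times the $b(j)$-generating polynomial, so your route lands on the formula as stated. What the paper's route buys is independence from the intersection-cohomology formulas; what yours buys is that you never need the individual E-polynomials of $D_1$, $D_3$, $D_{13}$ from \cref{prop:exceptionaldivisors} and \cref{prop;HodgeTateI} at this stage. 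Two small points: all relative shifts occurring here are even, so no signs $(-1)^i$ enter; and the contribution of $i_{\Sigma^\circ,*}\mathscr{L}$ is $E(\Sigma_\iota)^-$ because $q_*\QQ_{\Sigma^{\circ}_{\iota}}=\QQ_{\Sigma^{\circ}}\oplus\mathscr{L}$ and the fixed locus contributes nothing to the variant part.

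The genuine gap is your palindromicity argument. $T_{\mathrm{B}}$ is \emph{not} semiprojective and its cohomology is \emph{not} pure. Semiprojectivity is a feature of the Dolbeault side only (\S\ref{sec:semiproj}): the $\Gm$-action scales the Higgs field, and there is no analogue of it on the affine variety $\MB$ or on its blow-up $T_{\mathrm{B}}$. Purity fails as well: for $g\geq 3$, the tautological class $\alpha$ has weight $4$ and nonzero image in $H^{2}(\MB^{\mathrm{sm}})$ (cf. \cref{thm:generation}), and since the restriction $H^2(T_{\mathrm{B}})\to H^2(\MB^{\mathrm{sm}})$ is a strict morphism of mixed Hodge structures whose image contains this class, $H^2(T_{\mathrm{B}})$ cannot be pure of weight $2$. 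Moreover, ``$E(T_{\mathrm{B}})$ is a polynomial in $q$'' does not imply purity, nor even that the cohomology is of Hodge--Tate type (there may be cancellations in the alternating sum), and without purity, Poincar\'{e} duality on a smooth non-proper variety imposes no symmetry on the E-polynomial — this is precisely why the palindromicity of $E(\MBtw)$ and of $IE(\MB)$ are ``curious'' facts rather than formal ones. The only valid route, and the one the paper implicitly takes, is your fallback: verify $q^{6g-6}E(T_{\mathrm{B}})(1/q)=E(T_{\mathrm{B}})(q)$ directly on the closed formula (note that $\dim_{\CC}T=6g-6$, so the relevant degree is $6g-6$, not $2\dim_{\CC}T$). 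Contrast this with the Dolbeault side, where purity does hold (\cref{prop:purityandother}) and is what allows the paper to convert $IE(\MDol)$ into $IP_t(M)$ in \cref{thm:IP}; no such argument is available for $T_{\mathrm{B}}$.
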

\subsection{Outline and relation with other work}
\begin{itemize}
    \item In \S \ref{sec:preliminaries} we collect some preliminary results: the intersection cohomology of an affine cone, the decomposition theorem, some properties of the mixed Hodge structures of singular semi-projective varieties (cf \cite{HauselVillegas15}), and the stable isosingularity principle (implicitly used in \cite[p.834]{KiemYoo08}). Analogous degeneration techniques employed to establish the stable isosingularity principle have been discussed in \cite{deCataldoMaulikShen2019} and \cite[\S 4]{deCataldoMaulikShen2020} in relation to twisted character varieties.
    \item In \S \ref{sec:Kirwan--O'Grady desingularization} we describe the singularities of $M$ and the geometry of the Kirwan--O'Grady desingularization $\pi_T\colon T \to M$. This part highly relies on \cite{OGrady99} and \cite{KiemYoo08}. The computation of the Poincar\'{e} polynomial of the incidence variety $I_{2g-3}$ in \cite{KiemYoo08} contains a mistake, and we fix it in \S \ref{sec:incidencevariety}. 
    \item In \S \ref{sec:intersectioncohomologyoflocalmodels} we use several times the decomposition theorem to determine the intersection cohomology of the normal slice to strata of a Whitney stratification of $M$. 
    
    The singularities of the Betti and Dolbeault moduli spaces are locally modelled on Nakajima quiver varieties which usually do not admit a symplectic resolution. Although a lot is known about the intersection cohomology of quiver varieties with symplectic resolutions (see for instance \cite{Nakajima10}), the local computations in \S \ref{sec:intersectioncohomologyoflocalmodels} seem new.
    
    \item In \S \ref{sec:decompositionthm} we complete the proof of \cref{introthm:decompositiontheorempiT}. Then in \S \ref{sec:applicationdthm} and \S \ref{sec:SL2PGL2GL2} we argue how to compute the intersection E-polynomials and intersection Poincar\'{e} polynomial of $M$, and we prove the results of \S\ref{sec:computations}. The E-polynomial of $M$ are known thanks to \cite{MartinezMunoz16}, \cite{BaragliaHekmati17} and \cite{KiemYoo08}, while the ordinary Poincar\'{e} polynomial of $M$ appears in \cite{DaskalopoulosWentworth10}. Despite the active research in the field and the stimuli from the PI=WI conjecture, there are few previous works exhibiting explicit computations of the intersection cohomology of Dolbeault and Betti moduli spaces; see \cite{Felisetti2018} and \cite{FelisettiMauri2020}. 
    
    In \cite{Kiem06} Kiem studied the intersection cohomology of character varieties with coefficients in a compact Lie group. However, the methods in \cite{Kiem06} do not extend to the complex reductive case, since for a general complex reductive group $G$ the representation space $\Hom(\pi_1(C), G)$ is not smooth, and the quotient map $\Hom(\pi_1(C), G) \to \Hom(\pi_1(C), G)\sslash G$ is not placid in the sense of \cite{GM85}.
    
    We mention another remarkable precedent. The Dolbeault moduli space is a partial compactification of the cotangent bundle of the moduli space of stable vector bundles. The intersection cohomology of the moduli space of semistable vector bundles was determined in \cite{Kirwan86} for rank two (or equivalently in \cite{Kiem06}), and in \cite{MozgovoyReineke15} in full generality. It is unclear how this results may imply \cref{introthm:decompositiontheorempiT}.
    \item \S \ref{sec:P=Wconjectures} explores many implications concerning the P=W conjectures for $M$ stemmed from the previous calculations. 
\end{itemize}

\subsection{Acknowledgement}
I would like to acknowledge useful conversations and email exchange with Mark de Cataldo, Camilla Felisetti, Tamas Hausel, Daniel Huybrechts, Young-Hoon Kiem, Luca Migliorini, Hiraku Nakajima, Giulia Sacc\`{a}, Travis Schedler and Richard Wentworth. This work have been supported by the Max Planck Institute for Mathematics.

\section{Preliminaries}\label{sec:preliminaries}
\subsection{Intersection cohomology of affine cones}

Let $X$ be a complex projective variety of dimension $n-1$ with an ample line bundle $L$. The graded ring associated to $L$ is the graded $\CC$-algebra
\[R(X, L) \coloneqq \bigoplus_{m \geq 0} H^0(X, L^m).\]
The affine cone over $X$ with conormal bundle $L$ is 
\[C(X,L) \coloneqq \Spec R(X, L).\]
Let $s_1, \ldots, s_N$ be a set of generators for $R(X,L)$ of degree $m_1, \ldots, m_N$. Then there exists an embedding $C(X,L) \subseteq \CC^N$ such that $C(X,L)$ is invariant with respect to the $\Gm$-action 
\begin{equation}\label{Gmactiononcone}
  t \cdot (x_1, \ldots, x_N) = (t^{m_1}x_1, \ldots, t^{m_N}x_N).  
\end{equation}
Conversely, any affine variety with a $\Gm$-action and a fixed point which is attractive for $t \to 0$ is isomorphic to an affine cone; see for instance \cite[\S 3.5]{Demazure1988}.

All the singularities of this paper are locally modelled on affine cones, whose coordinate rings are not necessary generated in degree one. For this reason here we compute their intersection cohomology, thus generalising \cite[Example 2.2.1]{deCataldoMigliorini09}.
\begin{prop}[Intersection cohomology of an affine cone]\label{prop: interseccone}
\begin{equation*}
IH^d(C(X,L)) \simeq 
\begin{cases}
IH^d_{\mathrm{prim}}(X) & \text{ for } d< n\\
0 & \text{ for } d \geq n,
\end{cases}
\end{equation*}
where $IH^d_{\mathrm{prim}}(X) \coloneqq \ker (c_1(L) ^{n-d} \cup \colon IH^d(X) \to IH^{2n-d}(X))$ is the primitive intersection cohomology. 
\end{prop}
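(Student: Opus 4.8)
The plan is to compute $IH^d(C(X,L))$ by exploiting the contracting $\Gm$-action (\ref{Gmactiononcone}) and relating the intersection cohomology of the cone to that of its projectivization $X$ and of the punctured cone. The affine cone $C(X,L)$ has a distinguished cone point $o$ (the unique $\Gm$-fixed point attractive as $t\to 0$), and the punctured cone $C(X,L)\setminus\{o\}$ is a $\Gm$-bundle — actually a $\CC^\times$-bundle whose associated line bundle is $L^{-1}$ (up to the grading, since the $\Gm$-action may have nontrivial weights $m_i$; I would reduce to the case of weight one by a standard finite-quotient / root-stack argument, or note that intersection cohomology is insensitive to passing to the line bundle attached to a positive power of $L$). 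The first key step is therefore to identify $C(X,L)\setminus\{o\}$ up to the relevant homotopy type with the total space of the $\Gm$-bundle over $X$ classified by $c_1(L)$.

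First I would record the general principle that for a variety with a $\Gm$-action contracting everything to the single fixed point $o$, the intersection cohomology of the cone is computed by the cohomology of the link, or equivalently by the local intersection cohomology at $o$: one has $IH^d(C(X,L))\cong IH^d_{\{o\}}$-type truncation, and the cone formula (see \cite[Example 2.2.1]{deCataldoMigliorini09} for the degree-one case) gives
\begin{equation*}
IH^d(C(X,L)) \simeq
\begin{cases}
\operatorname{im}\big(IH^d(C(X,L)\setminus\{o\}) \leftarrow \cdots\big) & d<n,\\
0 & d\geq n,
\end{cases}
\end{equation*}
where the vanishing in degrees $\geq n$ is exactly the support/cosupport condition satisfied by the intersection complex under the cone-point stalk truncation ($n=\dim C(X,L)$). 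Concretely, I would use the standard computation that for an affine cone, $IH^d$ vanishes for $d\geq n$ and is the attracting-cell / stalk cohomology of $IC$ at $o$ in lower degrees.

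The second and central step is to identify the lower-degree part with the primitive intersection cohomology. I would use the Gysin sequence for the $\Gm$-bundle $C(X,L)\setminus\{o\}\to X$ in intersection cohomology: since $X$ is projective with ample $L$, the Euler class of this bundle is $c_1(L)$, and the sequence reads
\begin{equation*}
\cdots \to IH^{d-2}(X)\xrightarrow{\ \cup\, c_1(L)\ } IH^d(X)\to IH^d(C(X,L)\setminus\{o\})\to IH^{d-1}(X)\xrightarrow{\ \cup\, c_1(L)\ } IH^{d+1}(X)\to\cdots
\end{equation*}
The hard input here is the \emph{Hard Lefschetz theorem for intersection cohomology}: cup product with $c_1(L)^{n-1-d}$ is an isomorphism $IH^d(X)\xrightarrow{\sim} IH^{2(n-1)-d}(X)$, and more precisely the Lefschetz structure yields the primitive decomposition $IH^d(X)=\bigoplus_k c_1(L)^k\cup IH^{d-2k}_{\mathrm{prim}}(X)$. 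Feeding Hard Lefschetz into the Gysin sequence lets me compute the kernels and cokernels of the $\cup\,c_1(L)$ maps: the map $IH^{d-2}(X)\to IH^d(X)$ is injective for $d\leq n-1$, so the connecting maps degenerate and one extracts $IH^d(C(X,L)\setminus\{o\})$, then truncates to get exactly $\ker(c_1(L)^{n-d}\cup)=IH^d_{\mathrm{prim}}(X)$ in the range $d<n$.

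\textbf{The main obstacle} I anticipate is twofold: carefully justifying the Gysin sequence and the identification of the Euler class \emph{in intersection cohomology} when the coordinate ring is not generated in degree one — so the $\Gm$-action on the punctured cone has possibly nontrivial stabilizers and $C(X,L)\setminus\{o\}$ is only a Seifert-type $\Gm$-fibration rather than a genuine principal bundle — and correctly pinning down the truncation degree so that the answer is $IH^d_{\mathrm{prim}}(X)$ for $d<n$ and $0$ for $d\geq n$ (rather than the naive half-range one gets for cones generated in degree one). I would handle the non-principal issue by passing to a weighted/orbifold projective setting where intersection cohomology with rational coefficients is unchanged under the finite isotropy, so that the Gysin argument and Hard Lefschetz apply verbatim with $c_1(L)\in IH^2(X;\QQ)$, and then check the degree bookkeeping against the known degree-one case of \cite[Example 2.2.1]{deCataldoMigliorini09} to confirm the cutoff at $d=n=\dim C(X,L)$.
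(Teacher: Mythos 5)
Your proposal follows essentially the same route as the paper: the cone-formula truncation at degree $n$ (Durfee/Kirwan--Woolf), then the Gysin sequence of the punctured cone over $X$ split into short exact sequences by hard Lefschetz for intersection cohomology — which is exactly the paper's relative long exact sequence for $C(X,L)^*\hookrightarrow BC(X,L)$ with the Thom isomorphism — and finally the reduction of the non-degree-one case to the degree-one case by a finite cyclic cover and taking invariants, which is precisely the paper's explicit coordinatewise cover $g\colon\CC^N\to\CC^N$ with group $A=(\Z/m_1\Z)\times\cdots\times(\Z/m_N\Z)$. The only difference is presentational: the paper carries out your "root-stack/finite-quotient" step concretely via the diagram \eqref{blowupcone}.
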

\begin{proof}
Denote by $C(X,L)^* \coloneqq C(X,L) \setminus \{\text{vertex}\}$ the punctured affine cone. By \cite[Lemma 1]{Durfee95} or \cite[Proposition 4.7.2]{KirwanWoolf06}, we can write
\[
IH^d(C(X,L)) \simeq 
\begin{cases}
IH^d(C(X,L)^*) & \text{ for } d< n\\
0 & \text{ for } d\geq n.
\end{cases}
\]

Suppose now that $R(X,L)$ is generated in degree one. Then the blow-up of the origin 
\[p\colon BC(X,L) \coloneqq \Spec_X \bigoplus_{m \geq 0} L^m \to C(X,L)\]
is the total space of the line bundle $L^*$. By the hard Lefschetz theorem, the relative long exact sequence of the inclusion $C(X, L)^* \hookrightarrow BC(X,L)$ splits into the short exact sequences
\begin{equation}\label{ses:thomforIC}
  0 \to IH^{d-2}(X) \xrightarrow{c_1(L) \cup} IH^{d}(X) \to IH^{d}(C(X, L)^*) \to 0 \quad \text{ for } d < n.  
\end{equation} 
Therefore, we obtain that for $d < n$
\[IH^{d}(C(X, L)) \simeq \mathrm{coker} (c_1(L) \cup\colon IH^{d-2}(X) \to IH^{d}(X)) \simeq  IH^d_{\mathrm{prim}}(X).\]

If $R(X,L)$ is not generated in degree one, then $BC(X,L)$ is the total space of a line bundle only up to a finite cover; see \cite[\S 1.2]{OrlikWagreich71}. More precisely, consider the finite morphism $g\colon \CC^N \to \CC^N$ defined by
\[g(x_1, \ldots, x_N) = (x^{m_1}_1, \ldots, x^{m_N}_N).\] Set $V'= g^{-1}C(X,L)$, where $C(X,L)$ is embedded in $\CC^N$ as in \eqref{Gmactiononcone}.
We see that $C(X,L)$ is the quotient of $V'$ by the finite group $A= (\Z/m_1 \Z) \times \ldots \times (\Z/m_N \Z)$ acting on $V'$ by coordinatewise multiplication. 

$V'$ has a $\Gm$-action defined by $t \cdot (x_1, \ldots, x_N) = (t x_1, \ldots, t x_N)$, and covering the $\Gm$-action on $C(X,L)$ given by \eqref{Gmactiononcone}. Since the $\Gm$-action on $V'$ has weight one, $X'$ is the spectrum of a graded algebra generated in degree one, say $V' = C(X',L')$ for some projective variety $X'$ and ample line bundle $L'$. In particular, there exists a commutative diagram
\begin{equation}\label{blowupcone}
\begin{tikzpicture}[baseline=(current  bounding  box.east)]
\node (A) at (0,1.2) {$C(X',L')$};
\node (B) at (3,1.2) {$BC(X',L')$};
\node (E) at (6, 1.2) {$X'$};
\node (C) at (0,0) {$C(X,L)$};
\node (D) at (3,0) {$BC(X,L)$};
\node (F) at (6, 0) {$X$};
\draw[->,thick] (A) -- (C) node [midway,left] {$g$};
\draw[->,thick] (B) -- (A) node [midway,above] {$p'$};
\draw[->,thick] (B) -- (D) node [midway,right] {};
\draw[->,thick] (D) -- (C) node [midway,above] {$p$};
\draw[->,thick] (E) -- (B) node [midway,above] {$i'$};
\draw[->,thick] (F) -- (D) node [midway,above] {$i$};
\draw[->,thick] (E) -- (F) node [midway,right] {};
\end{tikzpicture}
\end{equation}
where $p$ and $p'$ are blow-ups of the vertices of the cones, $i$ and $i'$ are the embedding of the exceptional divisors, and the vertical arrows are quotients with respect to (the lift of) the action of $A$. Thus we have $IH^*(C(X',L'))^A \simeq IH^*(C(X, L))$.

The discussion above shows that the sequences \eqref{ses:thomforIC} are exact for $C(X',L')$, and it is $A$-equivariant by the commutativity of \eqref{blowupcone}. Taking invariants, we show then that \eqref{ses:thomforIC} holds for $C(X,L)$ unconditionally.  
\end{proof}
\subsection{The perverse Leray filtration}
In this section we briefly recall the statement of the decomposition theorem and the definition of the perverse filtration. 

For a complex algebraic variety $X$ let $D^b(X, \QQ)$ be the bounded derived category of complexes of sheaves of $\QQ$-vector spaces with algebraically constructible cohomology. Denote  the full abelian subcategory of perverse sheaves by $\Perv(X)$ and the perverse cohomology functors by ${}^{\mathfrak{p}}\mathcal{H}^i\colon D^b(S, \QQ) \to \Perv(X)$; see \cite{BeilinsonBernsteinDeligne1981} or \cite{deCataldoMigliorini09}.

Let $MHM_{\mathrm{alg}}(X)$ be the category of algebraic mixed Hodge modules with rational coefficients, and $D^bMHM_{\mathrm{alg}}(X)$ its bounded derived category. Let ${}^{\mathfrak{p}}\mathcal{H}^i\colon \newline D^bMHM_{\mathrm{alg}}(X) \to MHM_{\mathrm{alg}}(X)$ be the cohomology functors; see \cite{Saito90} or \cite{Schnell14}.

The simple objects of $D^bMHM_{\mathrm{alg}}(X)$ (resp. $D^b(X, \QQ)$) are the intersection cohomology complexes $IC_X(L)$, where $L$ is a polarizable variation of pure
Hodge structures (resp. a local system) on a Zariski-open subset of the smooth locus of $X$. We denote simply by $IC_X$ the complex $IC_X(\QQ_{X \setminus \Sing(X)})$. In particular, $IH^d(X) \simeq H^{d}(X, IC_X(\QQ_{X \setminus \Sing(X)})[-\dim X])$.

There is a forgetful functor $\rat\colon D^bMHM_{\mathrm{alg}}(X) \to D^b(X, \QQ)$ which commutes with ${}^{\mathfrak{p}}\mathcal{H}^i$ and pushforward $Rf_*$, and maps $MHM_{\mathrm{alg}}(X)$ in $\Perv(X)$. We will make no notational distinction between $K \in D^bMHM_{\mathrm{alg}}(X)$ and $\rat(K)$.

Now let $f\colon X \to Y$ be a proper morphism of varieties with defect of semismallness
\[
r \coloneqq \dim X \times_Y X - \dim X.
\]
The decomposition theorem of Beilinson--Bernstein--Deligne--Gabber, or its mixed Hodge module version by Saito, says that there is an isomorphism in $D^bMHM_{\mathrm{alg}}(X)$ (resp. in $D^b(X, \QQ)$)
\[
Rf_*IC_X = \bigoplus^r_{i=-r} {}^{\mathfrak{p}}\mathcal{H}^i(Rf_*IC_X)[-i] = \bigoplus^r_{i=-r} \bigoplus_l IC_{\overline{Y}_{i,l}}(L_{i,l})[-i],
\]
where $L_{i, l}$ are polarizable variations of pure
Hodge structures (resp. local systems) on the strata of a stratification $Y = \bigsqcup_{l} Y_{i,l}$; see \cite{BeilinsonBernsteinDeligne1981} and \cite{Saito89}.

The perverse (Leray) filtration is
\[
P_k IH^d(X)= \mathrm{Im} \bigg\{ H^d(\bigoplus^{k-r}_{i=-r} {}^{\mathfrak{p}}\mathcal{H}^i(Rf_*IC_X)[-i-\dim X])\to IH^d(X)\bigg\}.
\]
When $Y$ is affine, de Cataldo and Migliorini provided a simple geometric characterization of the perverse filtration; see \cite[Theorem 4.1.1]{deCataldoMigliorini2010}. Let $\Lambda^s\subset Y$ be a general $s$-dimensional affine section of $Y\subset \Aff^N$. Then 
\begin{equation}\label{prop:characperverse}
    P_k IH^d(X) = \mathrm{Ker}\left\lbrace IH^d(X)\rightarrow IH^d(f^{-1}(\Lambda^{d-k-1}))
\right \rbrace.
\end{equation}
This means that the cocycle $\eta \in IH^d(X)$ belongs to $P_kIH^d(X)$ if and only if its restriction to $f^{-1}(\Lambda^{d-k-1})$ vanishes, i.e.\ $\eta|_{f^{-1}(\Lambda^{d-k-1})}=0$. 

\subsection{Mixed Hodge structure of semi-projective varieties}\label{sec:semiproj}
In order to compute the intersection Poincar\'{e} polynomial of $M$, we observe that $\MDol$ and $\TDol$ are semi-projective.

\begin{defn} \emph{\cite[Definition 1.1.1]{HauselVillegas15}}
A \textbf{semiprojective} variety is a complex quasi-projective variety $X$ with a $\Gm$-action such that:
\begin{itemize}
\item the fixed point set $\Fix(X)$ is proper;
\item for every $m \in X$ the limit $\lim_{\lambda \to 0} \lambda \cdot m$ exists.
\end{itemize}
The \textbf{core} of $X$, denoted $\Core(X)$, is the (proper) union of the repelling sets of $\Fix(X)$; see \emph{\cite[Corollary 1.2.2]{HauselVillegas15}}.
\end{defn}
\begin{prop}\label{prop:retracttothecore}
Let $X$ be a semi-projective variety. Then the inclusion $\Core(X) \subset X$ is a homotopy equivalence.
\end{prop}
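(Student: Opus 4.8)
The plan is to realise the inclusion $\Core(X)\subset X$ as a deformation retract produced by the gradient flow of a moment map for the compact subtorus $S^1\subset\Gm$, and then to identify the core with the union of unstable manifolds of that flow. First I would fix a $\Gm$-equivariant locally closed embedding of $X$ into a projective space (which exists because a semiprojective $X$ carries a proper equivariant morphism to its affinization, an affine cone with attractive fixed point) and endow it with an $S^1$-invariant Kähler metric. Let $\mu\colon X\to\RR$ be the associated moment map for the Hamiltonian $S^1$-action. Standard Kähler geometry (Frankel) shows that $\mu$ is a Morse--Bott function whose critical locus is exactly $\Fix(X)$, and that $\nabla\mu$ generates the real one-parameter subgroup $\RR_{>0}\subset\Gm$. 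Thus, after possibly replacing $\mu$ by $-\mu$, the downward gradient trajectory through $m$ is the orbit $\lambda\cdot m$ as $\lambda\to 0$, and the upward trajectory is the orbit as $\lambda\to\infty$.

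Next I would use semiprojectivity to control $\mu$ at infinity. Because $\lim_{\lambda\to 0}\lambda\cdot m$ exists for every $m$ and $\Fix(X)$ is proper, every downward trajectory converges and the sublevel sets $\{\mu\le c\}$ are compact; hence $\mu$ is proper and bounded below, and it has only finitely many critical values, since $\Fix(X)$ has finitely many connected components $F_\alpha$. For such a function the downward gradient flow is complete, and Morse--Bott theory applies despite the noncompactness of $X$: one obtains a deformation retraction of $X$ onto the union $\bigcup_\alpha W^u(F_\alpha)$ of the unstable (descending) manifolds of the $F_\alpha$.

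Finally I would identify this union with the core. A point lies in $W^u(F_\alpha)$ precisely when its backward downward trajectory converges to $F_\alpha$, i.e.\ when $\lim_{\lambda\to\infty}\lambda\cdot m\in F_\alpha$; this is exactly the repelling set $\Repell(F_\alpha)$, so $\bigcup_\alpha W^u(F_\alpha)=\Core(X)$, and the deformation retraction above proves that the inclusion is a homotopy equivalence. I expect the main obstacle to be the second step: the naive assignment $m\mapsto \lim_{\lambda\to 0}\lambda\cdot m$ is discontinuous, as it jumps across the attracting cells of the Bialynicki--Birula decomposition, so one cannot simply flow to the limit. The properness and boundedness below of $\mu$, which come precisely from semiprojectivity, are exactly what is needed to run the Morse-theoretic handle argument on the noncompact space $X$ and to rule out critical values escaping to infinity. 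A secondary technical point is to match the positive/negative weight decomposition of the normal bundle to each $F_\alpha$ with the ascending/descending directions of $\mu$, which is what makes the unstable manifolds coincide with the algebraically defined repelling sets $\Repell(F_\alpha)$.
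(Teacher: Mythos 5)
The central gap is that your argument only works when $X$ is smooth, whereas the proposition is stated---and used in the paper---for arbitrary semiprojective varieties, which here are typically singular: \cref{prop:purityandother} applies it to $X=\MDol$, whose singular locus is precisely the object of study in this paper. Every tool you invoke belongs to Morse--Bott theory on manifolds: an $S^1$-invariant K\"ahler metric, Frankel's theorem that the moment map $\mu$ is Morse--Bott with critical locus $\Fix(X)$, the gradient flow with its unstable manifolds $W^u(F_\alpha)$, and the handle-by-handle deformation retraction onto their union. At a singular point of $X$ (in particular at singular fixed points, where the core meets the singularities) there is no Morse--Bott local normal form, the repelling sets $\Repell(F_\alpha)$ need not be manifolds, and the retraction built from handle attachments does not exist as stated; repairing this would require stratified or Kirwan-style degenerate Morse theory, a genuinely more delicate argument that you do not sketch. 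This is exactly the difficulty the paper's proof sidesteps: the flow $\RR^+\times X \to X$ is induced by the algebraic $\Gm$-action, hence is defined on $X$ singular or not, and one retracts $X$ onto a \emph{neighbourhood} of $\Core(X)$, which is in turn homotopy equivalent to $\Core(X)$ because the core is a compact algebraic (hence triangulable, ANR) subset; no local normal form near the fixed locus is ever needed.

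A secondary issue: even granting smoothness, the compactness of the sublevel sets $\{\mu\le c\}$ (equivalently, properness of $\mu$) does not follow, as you assert, merely from the convergence of all downward trajectories together with properness of $\Fix(X)$; points with bounded $\mu$ could a priori escape to infinity transversally to the flow. The standard route to properness is to use that a semiprojective variety is proper over its affinization, an affine cone, and to compare $\mu$ with the pullback of a proper exhaustion of that cone---a nontrivial input that your sketch treats as automatic. With smoothness and properness of $\mu$ in hand, the remainder of your argument (gradient lines are $\RR_{>0}$-orbits, so $W^u(F_\alpha)=\Repell(F_\alpha)$ and the union of unstable manifolds is $\Core(X)$) is the classical Frankel--Kirwan picture and would be a correct, if longer, alternative to the paper's one-line flow-plus-neighbourhood argument for smooth $X$; but as written it does not prove the proposition in the generality in which the paper needs it.
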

\begin{proof}
The flow $\RR^+ \times X \to X$, induced by the $\Gm$-action, defines a deformation retract of $X$ onto a neighbourhood of $\Core(X)$, which in turn is homotopy equivalent to $\Core(X)$.
\end{proof}

\begin{prop}\label{prop:purityandother}
If $X$ is a semiprojective variety, then 
\begin{enumerate}
    \item the mixed Hodge structure on $IH^*(X)$ is pure;
    \item $W_{d-1}H^d(X) = \ker \{ H^{d}(X) \to IH^{d}(X)\}$, and $W_d H^d(X) = H^d(X)$.
\end{enumerate}
\end{prop}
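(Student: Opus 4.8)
The plan is to deduce both statements from the purity of the cohomology of \emph{smooth} semiprojective varieties, established by Hausel--Villegas \cite{HauselVillegas15}, by passing to a $\Gm$-equivariant resolution and invoking the decomposition theorem in mixed Hodge modules.

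For (1), I would first choose a $\Gm$-equivariant resolution of singularities $f\colon Y \to X$; such a resolution exists by functoriality of resolution, and $Y$ is again semiprojective, because $f$ is proper and equivariant, so the limits $\lim_{\lambda \to 0}\lambda\cdot y$ exist in $Y$ by the valuative criterion applied to the (existing) limits in $X$, and $\Fix(Y)\subseteq f^{-1}(\Fix(X))$ is proper. By the theorem of Hausel--Villegas, $H^d(Y)$ is then pure of weight $d$. Since $f$ is proper and birational and $Y$ is smooth, the decomposition theorem in $D^bMHM_{\mathrm{alg}}(X)$ realises $IC_X$ as a direct summand of $Rf_*\QQ_Y[\dim Y]$, so that $IH^d(X)$ is a direct summand, as a mixed Hodge structure, of $H^d(Y)$. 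A sub-Hodge structure of a pure Hodge structure is pure, whence $IH^d(X)$ is pure of weight $d$.

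For the second identity in (2), $W_d H^d(X) = H^d(X)$: by \cref{prop:retracttothecore} the closed embedding $\Core(X)\hookrightarrow X$ is a homotopy equivalence, so restriction induces an isomorphism $H^*(X)\xrightarrow{\sim} H^*(\Core(X))$; being a bijective morphism of mixed Hodge structures, it is an isomorphism of mixed Hodge structures. Since $\Core(X)$ is proper, Deligne's weight bound gives $W_d H^d(\Core(X)) = H^d(\Core(X))$, and the claim follows. For the first identity, the inclusion $W_{d-1}H^d(X)\subseteq \ker\{H^d(X)\to IH^d(X)\}$ is immediate: the natural map is a morphism of mixed Hodge structures whose target is pure of weight $d$ by part (1), so by strictness it annihilates everything of weight $<d$.

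The reverse inclusion $\ker\{H^d(X)\to IH^d(X)\}\subseteq W_{d-1}H^d(X)$ is the crux, and is equivalent to injectivity of the induced map $\Gr^W_d H^d(X)\to IH^d(X)$ on the top weight graded piece. I would prove this by factoring the natural map as $H^d(X)\xrightarrow{f^*}H^d(Y)\to IH^d(X)$, where the second arrow is the projection onto the $IC_X$-summand supplied by the decomposition theorem. As $H^d(Y)$ is pure of weight $d$, the pullback $f^*$ kills $W_{d-1}H^d(X)$ and factors through $\Gr^W_d H^d(X)\to H^d(Y)$; the point is to show that this map is injective and that its image meets the complementary summands trivially, so that the projection to $IH^d(X)$ stays injective. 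This is the exact analogue of the classical statement that for a complete variety the natural map induces an injection $\Gr^W_d H^d \hookrightarrow IH^d$, and I expect it to be the main obstacle, since it requires distinguishing the weight-$d$ classes on $Y$ that descend from $X$ from those created by the exceptional locus. I would settle it by reduction to the proper case through the core, using the isomorphism $\Gr^W_d H^d(X)\cong\Gr^W_d H^d(\Core(X))$ together with the known injectivity over the complete variety $\Core(X)$, matched to $IH^d(X)$ via the $\Gm$-flow.
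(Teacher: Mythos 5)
Your part (1) and the identity $W_dH^d(X)=H^d(X)$ are proved exactly as the paper proves them (a $\Gm$-equivariant resolution plus Hausel--Villegas purity plus the decomposition theorem; retraction to the core plus Deligne's weight bound for compact varieties), and your easy inclusion $W_{d-1}H^d(X)\subseteq\ker\{H^d(X)\to IH^d(X)\}$ via strictness is fine. The gap is in the reverse inclusion, precisely the step you flag as the crux. Your plan is to deduce injectivity of $\Gr^W_dH^d(X)\to IH^d(X)$ from ``the known injectivity over the complete variety $\Core(X)$, matched to $IH^d(X)$ via the $\Gm$-flow''. That matching does not exist: intersection cohomology is not a homotopy invariant, the closed embedding $\Core(X)\hookrightarrow X$ induces no natural map between $IH^*(X)$ and $IH^*(\Core(X))$, and these groups genuinely differ --- for an affine cone the core is the vertex, so $IH^*(\Core(X))=\QQ$, while $IH^*(X)$ in low degrees is the primitive cohomology of the projective base (\cref{prop: interseccone}). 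So the compact-case statement $\Gr^W_dH^d(\Core(X))\hookrightarrow IH^d(\Core(X))$ cannot be transported to $X$ along the flow, and your argument stops exactly where the real work begins.

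The repair, which is the paper's route, is to perform the reduction to the compact case entirely in \emph{ordinary} cohomology, where homotopy invariance is available. The equivariant resolution restricts to a proper surjection $\Core(\widetilde{X})\to\Core(X)$ of compact varieties (equivariance and the valuative criterion give $f^{-1}(\Core(X))=\Core(\widetilde{X})$). Deligne's kernel theorem \cite[Corollary 5.43]{PetersSteenbrink2008} applied to this map yields $W_{d-1}H^d(\Core(X))=\ker\{H^d(\Core(X))\to H^d(\Core(\widetilde{X}))\}$ (strictly, apply it after precomposing with a resolution of $\Core(\widetilde{X})$, using that $H^*(\Core(\widetilde{X}))\simeq H^*(\widetilde{X})$ is pure), and the two retractions of \cref{prop:retracttothecore} convert this into $W_{d-1}H^d(X)=\ker\{f^*\colon H^d(X)\to H^d(\widetilde{X})\}$, a statement with no intersection cohomology in it. Finally, replace your projection factorization $H^d(X)\to H^d(\widetilde{X})\to IH^d(X)$ --- which is what forces you to worry that the image of $f^*$ might meet the complementary summands --- by the injection factorization: $f^*$ factors as $H^d(X)\to IH^d(X)\hookrightarrow H^d(\widetilde{X})$, the second arrow being the split injection furnished by the decomposition theorem. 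Injectivity of that arrow gives $\ker\{H^d(X)\to IH^d(X)\}=\ker f^*=W_{d-1}H^d(X)$ at once, and your condition ``(ii)'' becomes vacuous.
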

\begin{proof}
Let $f\colon\widetilde{X} \to X$ be a $\Gm$-equivariant resolution of singularities of $X$. Then $\widetilde{X}$ is smooth and semiprojective, and it has pure cohomology by \cite[Corollary 1.3.2]{HauselVillegas15}. Via the decomposition theorem, the mixed Hodge structure on $IH^*(X) \subset H^*(\tilde{X})$ is pure too. 

As $X$ retracts onto the proper algebraic variety $\Core(X)$, the weight filtration on $H^*(X)\simeq H^*(\Core(X))$ is concentrated in degree $[0, 2d]$ by \cite[Theorem 5.39]{PetersSteenbrink2008},  i.e.\ $W_d H^d(X) = H^d(X)$. 

The resolution $f$ induces a surjective morphism $\Core(\widetilde{X}) \to \Core(X)$ of proper algebraic varieties. Hence, by \cite[Corollary 5.43]{PetersSteenbrink2008} we have
\[W_{d-1}H^d(\Core(X)) = \ker  \{f^*\colon H^d(\Core(X)) \to H^d(\Core(\tilde{X}))\},\]
and so  $W_{d-1}H^d(X) = \ker \{ f^*\colon H^{d}(X) \to H^d(\widetilde{X})\}$ by \cref{prop:retracttothecore}. Finally, since $f^*$ factors as $H^d(X) \to IH^d(X) \hookrightarrow H^d(\widetilde{X})$, we conclude that \[W_{d-1}H^d(X) = \ker \{ H^{d}(X) \to IH^{d}(X)\}.\]
\end{proof}

The multiplicative group $\Gm$ acts on $\MDol(C, \Sl_n)$ by $\lambda \cdot (E, \phi)=(E, \lambda \phi)$.
The \textbf{Hitchin fibration} 
\begin{equation}\label{Hitchinfibration}
    \chi\colon  \MDolSl \to \bigoplus^{n}_{i=2} H^0(C, K_C^{\otimes i})
\end{equation}
assigns to $(E, \phi)$ the characteristic polynomial of the Higgs field $\phi$. By \cite[Theorem 6.11]{Simpson1994} the map $\chi$ is a proper $\Gm$-equivariant map, where $\Gm$ acts linearly on $H^0(X, K_X^{\otimes i})$ with weight $i$. In particular, $\Fix(\MDolSl)$ is contained in the nilpotent cone $\chi^{-1}(0)$. Therefore, $\MDolSl$ is semiprojective.

The same argument works for $G=\Gl_n, \PGl_n$ as well.

\subsection{Stable isosingularity principle}\label{sec:stableisosingularityprinciple}

Let $S$ be a smooth projective K3 surface or an abelian surface. In this section we establish a stable isosingularity principle for the (non-proper) Dolbeault moduli spaces $M(C, \Gl_n)$ and $M(C, \Sl_n)$ and the (proper) Mukai moduli spaces $\Mukai$ and $\Kumm$. This means that these moduli spaces have the same analytic singularities, up to multiplication by a polydisk. The upshot is that:
\begin{itemize}
    \item the description of the local model of the singularities of $M(S,v)$ in \cite{OGrady99} or \cite{ChoyKiem07} holds for $M(C, \Gl_2)$ and $M(C, \Sl_2)$ mutatis mutandis;
    \item the same sequence of blow-ups which desingularises $M(S,v)$ in \cite{OGrady99} resolves the singularities of $M(C, \Gl_2)$ and $M(C, \Sl_2)$ mutatis mutandis;
    \item the description of the summands of the decomposition theorem in \cref{introthm:decompositiontheorempiT} holds for $M(C, \Gl_2)$, $M(C, \Sl_2)$, $\Mukai$ and $\Kumm$ with Mukai vector $v = 2 w \in H^*_{\mathrm{alg}}(S, \ZZ)$, where $w$ is primitive and $w^2=2(g-1)$.
\end{itemize}

 We briefly recall the definition of Mukai moduli space. Fix an effective Mukai vector\footnote{i.e.\ there exists a coherent sheaf $\mathcal{F}$ on $S$ such that $v = (rk(\mathcal{F}), c_1(\mathcal{F}), \chi(\mathcal{F})-\epsilon(S)rk(\mathcal{F}))$, with $\epsilon(S)\coloneqq 1$ if $S$ is K3, and  $0$ if $S$ is abelian.} $v \in H^{*}_{\mathrm{alg}}(S, \ZZ)$. 
  Define $\Mukai$ the moduli space of Gieseker $H$-semistable sheaves on $S$ with Mukai vector $v$ 
for a sufficiently general polarization $H$ (which we will typically omit in the notation); see \cite[\S 1]{Simpson1994I}.  Further, if $S$ is an abelian variety with dual $\hat{S}$, and $\dim M(S,v)\geq 6$, 
then the Albanese morphism 
$\alb\colon \Mukai \to S \times \hat{S}$
is isotrivial, and we set
$\Kumm \coloneqq \alb^{-1}(0_S, \mathcal{O}_S)$.

\begin{rmk}[Donagi--Ein--Lazarsfeld degeneration]
Mukai moduli spaces should be thought as locally trivial deformations of Dolbeault moduli spaces as follows. Fix an ample curve $C \subset S$ of genus $g \geq 2$. Donagi, Ein and Lazarsfeld shows in \cite{DEL97} that there exists a flat family 
$\pi_{\mathcal{W}}\colon \mathcal{W} \to \PP^1$ such that
\begin{enumerate}
    \item $\pi_{\mathcal{W}}^{-1}(\PP^1 \setminus \{0\}) \simeq M(S,(0, nC, -n C^2/2) \times \mathbb{A}^1$;
    \item $\pi_{\mathcal{W}}^{-1}(0)\simeq \MDol(C, \Gl_n)$.
\end{enumerate}
Unless $g=2$ and $n=2$, $M(S,(0, nC, -n C^2/2))$ and $\MDol(C, \Gl_n)$ have $\QQ$-factorial terminal symplectic singularities; see \cite[Theorem B]{KaledinLehnSorger2006} and \cite[Theorem 1.2]{BellamySchedler2019}, together with Simpson's isosingularity principle \cite[Theorem 10.6]{Simpson1994}. Hence, by \cite[Theorem 17]{Namikawa08} the morphism $\pi_{\mathcal{W}}$ is locally analytically trivial\footnote{The local triviality of $\pi_{\mathcal{W}}$ holds for $g=2$ and $n=2$ too by \cite[Proposition 2.16]{PeregoRapagnetta2013}.}. Roughly this means that the two moduli spaces have the same singularities. We make this statement precise in \cref{prop:isosingularity}.
\end{rmk}

We start by stating the notion of stable isosingularity. 

\begin{defn}\label{defn:equisingularity}
The varieties $X$ and $Y$ are \textbf{stably isosingular} if there exist complex Whitney stratifications\footnote{See \cite[Chapter 1]{GoreskyMacPherson88} for a definition.} by (smooth non-necessarily connected) locally Zariski-closed subsets $X_i$ and $Y_i$ 
such that 
\begin{enumerate}
\item $X = \bigsqcup_i X_i$ and $Y = \bigsqcup_i Y_i$; 
\item \label{item:posets} the posets of closed subsets $\{\overline{X}_i\}$ and $\{\overline{Y}_i\}$ ordered by inclusion are equal;
\item \label{equising} the normal slices through $X_i$ and $Y_i$ are locally analytically isomorphic.
\end{enumerate} 
If $\dim X= \dim Y$, then we say that $X$ and $Y$ are \textbf{isosingular}.
\end{defn}
It is implicit in \emph{\ref{equising}.} that the stratifications above are \textbf{analytically equisingular} along each stratum, i.e.\ the analytic type of the normal slices through $x \in X_i$ (resp. $y \in Y_i$) is independent of $x$ (resp. $y$). Not all algebraic variety admits such a stratification; see \cite[Example 13.1]{Whitney65}. However, the moduli spaces considered below will satisfy the following stronger condition of analytic normal triviality.
\begin{defn}
A Whitney stratification $X = \bigsqcup_i X_i$ is \textbf{analytically trivial in the normal direction} to each strata, if for any $x \in X_i$ there exists a normal slice $N_x$ through $X_i$ at $x$, and a neighbourhood of $x$ in $X$ which is locally analytically isomorphic to $N_x \times T_x X_i$ at $(x, 0)$.
\end{defn}
Note that if $X$ and $Y$ are stably isosingular via Whitney stratifications which are analytically trivial in the normal direction, and a sequence of blow-ups along (the strict transforms of) some $X_i$ gives a desingularization of $X$, then the same sequence of blow-ups along the corresponding strata $Y_i$ gives a desingularization of $Y$. In addition, if $X$ and $Y$ are isosingular, then an analytic neighbourhood of any point of $X$ is isomorphic to an analytic neighbourhood of some point in $Y$. 

\begin{exa}[Analytically trivial fibrations]\label{ex:analtrivfibration}
Let $f\colon X \to Y$ be an analytic locally trivial fibration, and suppose that $F \coloneqq f^{-1}(y)$, with $y \in Y$, admits an analytically equisingular Whitney stratification. Then $X$ and $F$ are stably isosingular. Indeed, by the local triviality, any Whitney stratification of $f^{-1}(y)$ can be lifted to a Whitney stratification on $X$ with the same normal slices. In particular, if $W$ is a smooth algebraic variety, $F$ and $F \times W$ are stably isosingular.
\end{exa}

\begin{lem}[Quadraticity of deformation spaces]\label{lem:Quadraticityofdefomrationspaces}
Let $[F] \in \MB(C, \Gl_n)$ or $M(S,v)$ be a singular point corresponding to the polystable representation or polystable sheaf $F$. Then the representation space $\Hom(\pi_1(C), \Gl_n)$ at $F$ or the deformation space $\Def_F$ (cf \emph{\cite[\S 2.A.6]{HuybrechtsLehn2010}}) is quadratic, i.e.\ it is locally isomorphic to a (reduced) complete intersection of homogeneous quadrics.
\end{lem}
\begin{proof}
This follows from the Goldaman-Millson theory \cite{GoldmanMillson1988} if $[F] \in \MB(C, \Gl_n)$, or \cite[Theorem 1.2]{ArbarelloSacca2018} if $[F] \in M(S,v)$, with $S$ K3 surface. Looking into the proof of \cite[Theorem 3.7 and 3.8]{ArbarelloSacca2018} and \cite{Zhangformal2012}, one can see that the same proof holds for $[F] \in M(S,v)$ with $S$ abelian surface.
\end{proof}

\begin{prop}\label{prop:isosingularity}
Let $S$ be a K3 or an abelian surface with $\Pic(S)\simeq \Z$ generated by the class of a curve $C$ of genus $g \geq 2$. Then $\MB(C, \Gl_n)$ and $M(S,(0, nC, -nC^2/2))$ are isosingular.
\end{prop}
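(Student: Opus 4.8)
The plan is to factor the comparison through the Dolbeault moduli space, exploiting the two bridges already assembled above: the non-abelian Hodge correspondence relating the Betti and Dolbeault sides, and the Donagi--Ein--Lazarsfeld degeneration relating the Dolbeault and Mukai sides. Throughout I would work with the canonical stratification of each moduli space by the \emph{polystable type} of the parametrised object, i.e.\ the datum recording the stable summands of a polystable representation or sheaf together with their multiplicities; equivalently this is the stratification by stabiliser type, which for these symplectic varieties coincides with the stratification by symplectic leaves and is therefore Whitney and analytically trivial in the normal direction. Since every morphism invoked below respects the stable/polystable decomposition, the posets of closures in condition~\ref{item:posets} of \cref{defn:equisingularity} match automatically, and the whole task reduces to identifying the normal slices stratum by stratum.

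First I would treat the step comparing $\MDol(C,\Gl_n)$ with $M(S,(0,nC,-nC^2/2))$. Applying the Donagi--Ein--Lazarsfeld family $\pi_{\mathcal{W}}\colon \mathcal{W}\to\PP^1$ from the preceding remark, whose central fibre is $\MDol(C,\Gl_n)$ and whose fibre over any other point is $M(S,(0,nC,-nC^2/2))$, I would invoke the local analytic triviality of $\pi_{\mathcal{W}}$: away from $g=n=2$ this follows from Namikawa's theorem \cite{Namikawa08}, because both fibres carry $\QQ$-factorial terminal symplectic singularities by \cite{KaledinLehnSorger2006} and \cite{BellamySchedler2019}, while the remaining case is covered by \cite[Proposition 2.16]{PeregoRapagnetta2013}. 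Local triviality means that the germ of $\mathcal{W}$ at any point of a fibre splits analytically as a product of the germ of that fibre with a disc in $\PP^1$; comparing the central fibre with a general fibre then yields, at corresponding points, an analytic isomorphism of germs carrying normal slices to normal slices. A dimension count confirms that the two spaces have equal dimension: by adjunction $C^2 = 2g-2$, so the Mukai self-intersection is $v^2 = n^2 C^2 = 2n^2(g-1)$ and $\dim M(S,v) = v^2 + 2 = 2n^2(g-1)+2 = \dim \MDol(C,\Gl_n)$.

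Next I would treat the step comparing $\MB(C,\Gl_n)$ with $\MDol(C,\Gl_n)$ via Simpson's isosingularity principle \cite[Theorem 10.6]{Simpson1994}: composing the Riemann--Hilbert biholomorphism $\MB \cong M_{\mathrm{DR}}$ with the $\Gm$-flow on the Hodge moduli space identifies the analytic germ of $\MB(C,\Gl_n)$ at a point with that of $\MDol(C,\Gl_n)$ at the corresponding point, again preserving the polystable stratification and hence the normal slices. Since both spaces have dimension $2n^2(g-1)+2$, they are isosingular. Composing the two steps --- legitimate because all three spaces are stratified by the same poset of polystable types and the germ isomorphisms are compatible with it --- produces the Whitney stratifications and slice isomorphisms required by \cref{defn:equisingularity}, so $\MB(C,\Gl_n)$ and $M(S,(0,nC,-nC^2/2))$ are isosingular.

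The delicate point, and the one I would spend the most care on, is the verification of condition~\ref{equising} of \cref{defn:equisingularity}: that the local analytic isomorphisms furnished by Namikawa and Simpson genuinely carry normal slices to normal slices and are compatible with a single common stratification of all three spaces. This is where the quadraticity of the deformation spaces (\cref{lem:Quadraticityofdefomrationspaces}) enters: it pins down each normal slice as the same explicit cone, namely the reduced complete intersection of quadrics cut out by the Yoneda square on the $\operatorname{Ext}$-algebra of the stable factors, whose shape depends only on the combinatorial type of the point and on the numerical datum $C^2 = 2g-2 = v^2/n^2$ shared by the two sides. Checking that these explicit slices agree across the Betti, Dolbeault and Mukai descriptions, uniformly in the stratum and including the exceptional case $g=n=2$, is the real content behind the formal transitivity of isosingularity.
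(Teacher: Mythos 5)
Your route is the one the paper itself relegates to a sketch (the ``degeneration argument'' following \cref{prop:isosingularity}); the paper's actual proof is different, matching strata and normal slices directly through Nakajima quiver local models furnished by quadraticity (\cite{BellamySchedler2019}, \cite{ArbarelloSacca2018}) and Mayrand's analytic normal triviality theorem \cite{Mayrand2018}, and closing with the dimension count you also give. As written, your degeneration step has a genuine gap. Local analytic triviality of $\pi_{\mathcal{W}}$ identifies, for $x$ in the central fibre, a neighbourhood of $x$ in $\MDol(C,\Gl_n)$ with an open subset of a nearby fibre $M(S,v)$; but since $\pi_{\mathcal{W}}$ is \emph{not} proper, the union of these open subsets over all $x$ and all small $t$ does not exhaust the nearby fibres --- it only covers an analytic neighbourhood of the nilpotent cone of $M(S,v)$ in the sense of \cite[\S 2]{DEL97}. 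So ``comparing the central fibre with a general fibre at corresponding points'' is not available at an arbitrary point of the Mukai space: the degeneration proves isosingularity of $\MDol(C,\Gl_n)$ with a neighbourhood of the nilpotent cone, and nothing more until you (a) show that this neighbourhood meets \emph{every} orbit-type stratum of $M(S,v)$, and (b) propagate the slice identification along each stratum using analytic triviality in the normal direction. You gesture at (b) through \cref{lem:Quadraticityofdefomrationspaces}, but (a) never appears in your argument.

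The second problem is your claim that the posets of strata (condition \emph{\ref{item:posets}} of \cref{defn:equisingularity}) ``match automatically'' because everything respects polystable type. There is no morphism between $\MB(C,\Gl_n)$ and $M(S,v)$, so nothing is automatic: the correspondence of strata must be built from the combinatorics of decompositions, and this is precisely where the hypothesis $\Pic(S)\simeq\ZZ$ generated by $[C]$ does its work. If $[C]$ decomposed into effective classes, the Mukai space would contain polystable sheaves whose summands have Mukai vectors with no counterpart among sub-representations, hence strictly \emph{more} orbit-type strata than the Betti space (this is the content of the paper's footnote in its proof, and it is also what makes (a) above true). Your only appeal to Picard rank one is the numerical identity $C^2=v^2/n^2$, which is not where the hypothesis is needed. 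Finally, the assertion that the polystable stratification is Whitney and analytically trivial in the normal direction is itself a theorem, not a formality: the paper derives it from the quiver local models plus \cite{Mayrand2018}; Kaledin-type product decompositions along symplectic leaves are only formal. Note that your last paragraph, pushed to completion (normal slices pinned down as the quadratic cone on the $\operatorname{Ext}$-data, depending only on the stabiliser type), would reconstruct exactly the paper's proof --- at which point the Donagi--Ein--Lazarsfeld and Simpson steps in your argument become superfluous.
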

\begin{proof}
Given a (quasi-projective) variety $X$ equipped with the action of a reductive group $G$, let $\xi\colon X \to Y \coloneqq X \sslash G$ be the quotient map. Any fibre $\xi^{-1}(y)$, with $y \in Y$, contains a closed $G$-orbit $T(y)$. Denote the conjugacy class of a closed subgroup $H$ of $G$ by $(H)$. Then $Y_{(H)}$ is the set of points $y \in Y$ such that the stabiliser of $x \in T(y)$ is in $(H)$. The loci $Y_{(H)}$ are the strata of the \textbf{stratification by orbit type} of $Y$.

If $Y$ is a Nakajima quiver variety\footnote{We use Nakajima quiver varieties only tangentially in this place. For brevity we omit the definition, and we refer the reader for instance to \cite{Ginzburg2012}.}, then the stratification by orbit type is a complex Whitney stratification, which is analytically trivial in the normal direction to each stratum, due to \cite[Proposition 4.2]{Mayrand2018}.

$\MB(C, \Gl_n)$ and $M(S,(0, nC, -nC^2/2))$ are $\PGl_N$-quotients, and the quadraticity of the deformation spaces imply that they are locally modelled on Nakajima quiver varieties; see \cite[Theorem 2.5]{BellamySchedler2019} and \cite[Proposition 6.1]{ArbarelloSacca2018}. 
By construction, the stratifications by orbit type of $\MB(C, \Gl_n)$ and $M(S,(0, nC, -nC^2/2))$ are locally isomorphic to stratification by orbit type of quiver varieties, and so they are complex Whitney stratifications, analytically trivial in the normal direction to each stratum.

A singular point of either moduli space is a polystable objects
\[F = F^{l_1}_1\oplus \ldots \oplus F^{l_s}_s,\]
where $F_i$ are distinct stable factors. The automorphism group of $F$ is  
\[\prod^s_{i=1} \Gl_{l_i} \subset \Gl_n,\]
which can be identified up to constants with the stabiliser of a point in $T(F)$ under the $\PGl_N$-action; see for instance \cite[\S 2.5]{KaledinLehnSorger2006}\footnote{Note that this is the only place where we use the assumption that $S$ has Picard number one. Otherwise, if $[C]$ can be decomposed in the sum of effective classes, then the rank of the automorphism group of $F\in M(S,(0, nC, -nC^2/2))$ may be greater than $n$, and the stratification by orbit type of $M(S,(0, nC, -nC^2/2))$ would have more strata than that of $\MB(C, \Gl_n)$. Observe however that a general result is achieved in \cref{Stableisosingularityprinciple}.}. 

The poset of inclusions of the orbit type strata for both the Dolbeault and Mukai moduli spaces is isomorphic to the poset of inclusion of the stabilisers of $T(F)$, and the analytic type of the normal slice through an orbit type strata is prescribed by the (abstract) isomorphism class of the stabiliser; see again \cite[Theorem 2.5]{BellamySchedler2019} and \cite[\S 2.7]{KaledinLehnSorger2006}. This gives \emph{\ref{item:posets}}. and \emph{\ref{equising}}. of \cref{defn:equisingularity}. 
The isosingularity follows from
\[\dim \MB(C, \Gl_n)=2(g-1)n^2+2=v^2+2= \dim M(S,(0, nC, -nC^2/2)).\]
\end{proof}
There exists a clear geometric argument for \cref{prop:isosingularity}, sketched below. 
\begin{proof}[Sketch of the proof of \cref{prop:isosingularity} via a degeneration argument]
Via the \\Donagi--Ein--Lazarsfeld degeneration one can actually prove that $\MDol(C, \Gl_n)$ is isosingular to a neighbourhood of a nilpotent cone of  $M(S,(0, nC, -nC^2/2))$ as defined in \cite[\S 2]{DEL97}. This is an analytic open set of the Mukai moduli space that intersects all the orbit type strata, if $C$ generates the Picard group of $S$. In order to extend the result to the whole Mukai moduli space, it is sufficient to invoke the analytic triviality in the normal direction of the stratification by orbit type of $M(S,(0, nC, -nC^2/2))$, which follows from the quadraticity of the deformation spaces \cref{lem:Quadraticityofdefomrationspaces}.
\end{proof}

\begin{thm}[Stable isosingularity principle] \label{Stableisosingularityprinciple} Let $C$ be a curve of genus $g \geq 2$, and let $S$ be a K3 or an abelian surface. Fix a Mukai vector $v = n w \in H^*_{\mathrm{alg}}(S, \ZZ)$, where $w$ is primitive and $w^2=2(g-1)$. 

Then $M(C, \Gl_n)$, $M(C, \Sl_n)$, $\Mukai$ and $\Kumm$ are stably isosingular.
\end{thm}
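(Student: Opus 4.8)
The plan is to exhibit a chain of stable isosingularities linking the four spaces and to conclude by transitivity of the relation of \cref{defn:equisingularity} (which holds after passing to common refinements of the Whitney stratifications involved). First I would fix an auxiliary surface $S_0$ of the same type as $S$---K3 if $S$ is K3, abelian if $S$ is abelian---with $\Pic(S_0) \simeq \Z$ generated by the class of a curve $C_0$ of genus $g$; such an $S_0$ exists for every $g \geq 2$, and $w_0 = (0, C_0, -C_0^2/2)$ is then a primitive Mukai vector with $w_0^2 = C_0^2 = 2(g-1)$. By \cref{prop:isosingularity}, $\MB(C, \Gl_n)$ is isosingular to $M(S_0, n w_0)$, while Simpson's isosingularity principle \cite[Theorem 10.6]{Simpson1994} matches the analytic singularities of $\MB(C, \Gl_n)$ and $\MDol(C, \Gl_n)$; hence either incarnation of $M(C, \Gl_n)$ is isosingular to $M(S_0, n w_0)$.

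Next I would connect the given Mukai space $\Mukai = M(S, v)$ to $M(S_0, n w_0)$. The idea is to form the relative moduli space of Gieseker-semistable sheaves with Mukai vector $n w$ over the moduli of pairs (surface of the prescribed type, primitive class $w$ with $w^2 = 2(g-1)$), a base which is irreducible and in particular connected. Away from the single exceptional case $g = n = 2$, every fibre has $\QQ$-factorial terminal symplectic singularities by \cite[Theorem B]{KaledinLehnSorger2006}, so Namikawa's theorem \cite[Theorem 17]{Namikawa08} applies fibrewise and the family is locally analytically trivial; the excluded case is covered by \cite[Proposition 2.16]{PeregoRapagnetta2013}. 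Local triviality over a connected base forces all fibres to be isosingular, so $M(S, v)$ is isosingular to $M(S_0, n w_0)$ and thereby to $M(C, \Gl_n)$.

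It remains to descend along two smooth fibrations. The Albanese morphism $\alb\colon M(C, \Gl_n) \to M(C, \Gl_1)$ of \eqref{albmap} is étale, hence analytically, locally trivial with smooth base $M(C, \Gl_1)$ and fibre $M(C, \Sl_n)$, so \cref{ex:analtrivfibration} shows that $M(C, \Gl_n)$ and $M(C, \Sl_n)$ are stably isosingular. When $S$ is abelian, the isotrivial Albanese morphism $\alb\colon \Mukai \to S \times \hat{S}$ is likewise analytically locally trivial with smooth base and fibre $\Kumm$, and \cref{ex:analtrivfibration} gives that $\Mukai$ and $\Kumm$ are stably isosingular. Combining the three tasks and invoking transitivity yields the stable isosingularity of $M(C, \Gl_n)$, $M(C, \Sl_n)$, $\Mukai$ and $\Kumm$.

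The main obstacle is the central deformation step, where one must move off the Picard-number-one locus on which \cref{prop:isosingularity} is available. On a general $S$ the class $w$ may split into a sum of effective classes, so the orbit-type stratification of $M(S, v)$ can be strictly finer than that of $M(C, \Gl_n)$ (see the footnote to \cref{prop:isosingularity}), and a direct quiver-variety comparison of strata fails. The resolution is not to match orbit-type strata by hand but to let the locally trivial family furnish, fibrewise-compatibly, Whitney stratifications by analytic singularity type whose posets and normal slices automatically agree---precisely the data demanded by \cref{defn:equisingularity}. The substantive work is therefore the verification of Namikawa's hypotheses uniformly over the base, namely flatness of the relative moduli space and the $\QQ$-factorial terminal symplectic nature of all its fibres, together with the irreducibility of the base.
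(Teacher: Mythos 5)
Your skeleton (auxiliary Picard-rank-one surface $S_0$, \cref{prop:isosingularity}, Simpson's isosingularity principle, and the Albanese fibrations fed into \cref{ex:analtrivfibration}) is the same as the paper's, but your central deformation step has a genuine gap. The base you propose --- pairs $(S, w)$ with $w$ primitive and $w^2 = 2(g-1)$ --- is not connected, and cannot be: writing $w = (r, c, \chi) \in H^*_{\mathrm{alg}}(S, \ZZ)$, the components $r$ and $\chi$ are topological, hence constant in any family of surfaces; only the middle component $c$ can move (subject to remaining algebraic). So your relative moduli space splits into components indexed by $(r,\chi)$, the claim that the base is ``irreducible and in particular connected'' fails, and no locally trivial family over it can connect a general $\Mukai$ --- for instance one with $w$ of positive rank --- to $M(S_0, n w_0)$ with $w_0 = (0, C_0, -C_0^2/2)$. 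Changing the rank or the Euler-characteristic component of the Mukai vector requires Fourier--Mukai equivalences (twists by line bundles, spherical twists, the Poincar\'{e} equivalence with the dual abelian surface) interleaved with deformations; this is exactly what the result the paper invokes at this point, \cite[Theorem 1.17]{PeregoRapagnetta18}, supplies, and it is why that citation cannot be replaced by a purely deformation-theoretic family argument. Your fibrewise Kaledin--Lehn--Sorger/Namikawa argument is sound on each connected component --- it is the paper's Donagi--Ein--Lazarsfeld remark in relative form --- but it can never perform the rank-changing step, so the chain of isosingularities you build does not reach $\Mukai$ for general $v$.

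Two smaller points. The Albanese morphisms are not \'{e}tale (their fibres are positive-dimensional); they are \'{e}tale-locally trivial fibrations, which is what makes them analytically locally trivial. Moreover, \cref{ex:analtrivfibration} requires the fibre to admit an analytically equisingular Whitney stratification; this is not automatic, and it is precisely where the paper uses the orbit-type stratification together with the quadraticity of deformation spaces (\cref{lem:Quadraticityofdefomrationspaces}), so it should not be waved through.
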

\begin{proof}
\begin{itemize}
     \item $\MDolG$ and $\MBG$ are isosingular by \cite[Theorem 10.6]{Simpson1994}, independently on the complex structure of $C$.
     \item Now let $S'$ be a K3 or an abelian surface such that $C$ embeds in $S'$ and generates its Picard group. Then $M(S',(0, nC, -nC^2/2))$ and $M(S,v)$ (resp. $K(S',(0, nC, -nC^2/2))$ and $K(S,v)$) are isosingular by \cite[Theorem 1.17]{PeregoRapagnetta18}, independently on the complex structure of $S$.
    \item $\MB(C, \Gl_n)$ and $M(S',(0, nC, -nC^2/2))$ are isosingular by \cref{prop:isosingularity}.
    \item Let $S$ be an abelian surface. The morphisms $\alb\colon \Mukai \to S \times \hat{S}$ and $\alb\colon \MDol(C, \Gl_n) \to M(C, \Gl_1)$, given by $\alb((E, \phi))=(\det E, \text{tr} \phi)$, are \'{e}tale locally  trivial fibrations with fibers $\MDol(C, \Sl_n)$ and $\Kumm$ respectively. The restriction of $\alb$ to the orbit type strata are \'{e}tale locally trivial too. This means that there exists a neighbourhood of $[F] \in M(S,v)_{(H)}$ locally analytically isomorphic to 
   \begin{equation}\label{eq:etalelocalorbittype}
       N_{[F]} \times T_{[F]}M(S,v)_{(H)} \simeq N_{[F]} \times T_{[F]}\Kumm_{(H)} \times T_{\alb([F])}(S \times \hat{S})
   \end{equation}
    at $([F], 0)$, where $N_{[F]}$ is a normal slice through  $\Kumm_{(H)}$ at $[F]$. Further, the morphism $\alb$ is locally given by the linear projection onto the last factor of \eqref{eq:etalelocalorbittype} by \cref{lem:Quadraticityofdefomrationspaces}. 
    The same argument works for $\MDol(C, \Sl_n)$ too. As in \cref{ex:analtrivfibration}, we conclude that $\MDol(C, \Gl_n)$ and $\MDol(C, \Sl_n)$ (resp. $\Mukai$ and $\Kumm$) are stably isosingular.
\end{itemize}
\end{proof}
\section{Kirwan--O'Grady desingularization}\label{sec:Kirwan--O'Grady desingularization}
\subsection{Singularities of $M$}
Recall that $M$ denotes indifferently the moduli spaces $\MBG$ or $\MDolG$ with $G=\Gl_2$ or $\Sl_2$. The stratification by orbit type of $M$ (cf \S \ref{sec:stableisosingularityprinciple}) determines a filtration by closed subsets
\[M \supset \Sigma \coloneqq \Sing M \supset \Omega \coloneqq \Sing \Sigma.\]
In this section we characterise $\Sigma$, $\Omega$ and their normal slices, maily appealing to \cite{OGrady99}. 

\begin{prop}\label{prop:sing generalities}
\begin{enumerate}
    \item $M(C, G)$ is an algebraic variety of dimension $6g-6$, if $G=\Sl_2$, or $8g-6$ if $G=\Gl_2$. 
    \item \label{item:singularlocusstrictlysemistable}The singular locus of $M$ is the subvariety of strictly semi-simple Higgs bundles or representations.
    \item If $g \geq 3$, $M$ is factorial with terminal symplectic singularities. If $g=2$, $M$ admits a symplectic resolution. 
\end{enumerate} 
\end{prop}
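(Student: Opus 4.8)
The plan is to handle the three items in turn, pushing each genuinely local question to the well-understood Mukai moduli spaces through the stable isosingularity principle (\cref{Stableisosingularityprinciple}).

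For \emph{(1)} I would record the dimension count already performed inside the proof of \cref{prop:isosingularity}, where $\dim \MB(C,\Gl_n) = 2(g-1)n^2+2$; specialising to $n=2$ gives $8g-6$. To pass to $\Sl_2$ I would use the Albanese fibration \eqref{albmap}: since $\alb\colon M(C,\Gl_2)\to M(C,\Gl_1)$ is étale locally trivial with fibre $M(C,\Sl_2)$ and base $M(C,\Gl_1)$ of dimension $2g$, one gets $\dim M(C,\Sl_2) = (8g-6)-2g = 6g-6$. By Simpson's homeomorphism \cite{Simpson1994} these dimensions agree on the Betti and Dolbeault sides.

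For \emph{(2)} I would prove the two inclusions separately. That the stable (equivalently irreducible) locus is contained in the smooth locus is classical: at such a point the automorphisms reduce to scalars, the deformation theory is unobstructed, and the quotient is smooth there \cite{Simpson1994}. For the reverse inclusion, let $[F]$ with $F = F_1\oplus F_2$ be a strictly semisimple point; its stabiliser $\Aut(F)/\Gm$ is positive-dimensional. By the quadraticity of the deformation space (\cref{lem:Quadraticityofdefomrationspaces}) and the Luna slice theorem, a neighbourhood of $[F]$ is analytically isomorphic to a Nakajima quiver variety $\mu^{-1}(0)\sslash H$ with $H$ positive-dimensional. The remaining point is to check that this local model is genuinely singular; I would do so by exhibiting it, following \cite{OGrady99}, as a nontrivial affine cone, whose vertex is singular, or equivalently by verifying that its Zariski tangent space strictly exceeds $\dim M$. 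This identifies $\Sigma = \Sing M$ with the strictly semisimple locus, and its two orbit-type strata are precisely the $\Sigma$ and $\Omega$ analysed in \S\ref{sec:intersectioncohomologyoflocalmodels}.

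For \emph{(3)} I would invoke \cref{Stableisosingularityprinciple} to replace $M(C,\Gl_2)$ and $M(C,\Sl_2)$ by the Mukai moduli space $M(S,v)$ with $v=2w$, $w^2=2(g-1)$, which carries the same local analytic singularities up to a smooth factor; factoriality, terminality and the symplectic condition are all preserved under such stable isosingularity. When $g\geq 3$ (i.e.\ away from the exceptional case $g=2$, $n=2$), Kaledin--Lehn--Sorger \cite[Theorem B]{KaledinLehnSorger2006} and Bellamy--Schedler \cite[Theorem 1.2]{BellamySchedler2019} yield $\QQ$-factorial terminal symplectic singularities; I would upgrade $\QQ$-factoriality to factoriality using the description of the local class group in loc.\ cit., and transport the result from $\Gl_2$ to $\Sl_2$ via the étale local triviality of \eqref{albmap}. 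For $g=2$ the Kirwan--O'Grady desingularization $\pi_T\colon T\to M$ specialises to O'Grady's resolution of his ten-dimensional example, which is crepant and hence symplectic. The main obstacle is the second inclusion in \emph{(2)}: a positive-dimensional stabiliser does not \emph{a priori} force a singular point, so one genuinely needs the explicit cone model rather than a bare dimension estimate, and the cleanest remedy is to defer to O'Grady's local descriptions and to the slice computations of \S\ref{sec:intersectioncohomologyoflocalmodels}.
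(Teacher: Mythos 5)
Your overall architecture---establish the statements in one model case and move them to the other moduli spaces with the stable isosingularity principle (\cref{Stableisosingularityprinciple})---is the paper's architecture as well: the paper cites \cite[Theorem 1.1, 1.2, 1.5, Lemma 2.8]{BellamySchedler2019} for $\MB(C,\Gl_2)$ and transfers, and your more hands-on treatments of items (1) and (2) (dimension count plus the Albanese fibration \eqref{albmap}; the two inclusions via deformation theory and the explicit cone models of \cref{prop:normalslices}) are sound. The genuine gap is in item (3), where you assert that ``factoriality, terminality and the symplectic condition are all preserved under such stable isosingularity.'' This is false for factoriality, and already for $\QQ$-factoriality. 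Stable isosingularity only identifies analytic germs up to smooth factors, and ($\QQ$-)factoriality is not an analytic-local invariant: it is a statement about Zariski-local rings, equivalently about global Weil divisors being ($\QQ$-)Cartier, and a factorial variety can be locally analytically isomorphic to a non-factorial one. The standard example is an ordinary double point $\{xy=zw\}$: its analytic germ has class group $\ZZ$, generated by a plane which is not $\QQ$-Cartier, yet a projective threefold with one such node (e.g.\ a generic one-nodal quintic) is factorial. Consequently you cannot carry factoriality from the Mukai side (Kaledin--Lehn--Sorger) back to $M(C,\Gl_2)$, nor from $\Gl_2$ to $\Sl_2$, by isosingularity or by \'{e}tale local triviality of \eqref{albmap}; and your proposed repair---upgrading $\QQ$-factoriality to factoriality via local class groups---presupposes $\QQ$-factoriality of the target space, which suffers from exactly the same non-transferability. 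This is precisely the point the paper flags: it transfers everything \emph{except} factoriality, and re-proves factoriality by repeating the argument of \cite[Theorem 1.2]{BellamySchedler2019} word for word (a global argument, via the complete-intersection property of the representation scheme, parafactoriality, and descent to the GIT quotient); compare the paper's use of the complete-intersection property of $\Hom(\pi_1(C),\Sl_2)$ in \cref{lem:simplelocus}.

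A smaller inaccuracy, worth correcting: for $g=2$ the Kirwan--O'Grady desingularization $\pi_T\colon T \to M$ is \emph{not} crepant, hence not symplectic; as the paper's footnote records, the symplectic resolution is obtained from the intermediate space $S$ by contracting a $\PP^2$-bundle (equivalently, it is the blow-up of the reduced singular locus, after Lehn--Sorger). So the conclusion that $M$ admits a symplectic resolution in genus $2$ is correct, but not for the reason you give.
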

\begin{proof}
The statements have been proved for $\MB(C, \Gl_2)$ in \cite[Theorem 1.1, 1.2, 1.5, Lemma 2.8]{BellamySchedler2019}. The same holds for $M$ by the stable isosingularity principle (\cref{Stableisosingularityprinciple}), possibly with the exception of the factoriality. However, to show that $M$ is factorial, one can repeat the argument of \cite[Theorem 1.2]{BellamySchedler2019} words by words.
\end{proof}

\begin{prop}[Singularities of $M$]\label{prop:singulMB} 
\begin{enumerate}
     \item The singular locus of $\MB (C, \Sl_2)$, denoted $\Sigma_{\mathrm{B}} (C, \Sl_2)$, is  
     \[\left\lbrace (A_1, B_1, \ldots, A_g, B_g) \in (\CC^*)^{2g} \subset \Sl^{2g}_2\right \rbrace\sslash \Sl_2 \simeq (\CC^*)^{2g}/(\ZZ/2\ZZ), \]
 where $\ZZ/2\ZZ$ acts on $(\CC^*)^{2g}$ by $v \mapsto -v$. Set $\Sigma_{\iota, \mathrm{B}} (C, \Sl_2) \coloneqq (\CC^*)^{2g}$.
 \item The singular locus of $\MB (C, \Gl_2)$, denoted $\Sigma_{\mathrm{B}} (C, \Gl_2)$, is  
     \[\left\lbrace (A_1, B_1, \ldots, A_g, B_g) \in (\CC^*)^{2g} \times (\CC^*)^{2g} \subset \Gl^{2g}_2\right \rbrace\sslash \Gl_2,\] 
     which is isomorphic to the second symmetric product of $(\CC^*)^{2g}$. \newline Set $\Sigma_{\iota, B} (C, \Gl_2) \coloneqq (\CC^*)^{2g} \times (\CC^*)^{2g}$.
     \item The singular locus of $\MDol (C, \Sl_2)$, denoted $\Sigma_{\mathrm{Dol}} (C, \Sl_2)$, is  
     \[\lbrace (E, \Phi) | \, (E, \Phi)  \simeq (L, \phi) \oplus (L^{-1}, -\phi), \, L \in \operatorname{Jac}(C), \, \phi \in H^0(C, K_C)  \rbrace,\]
     which is isomorphic to \[(\operatorname{Jac}(C) \times H^0(C, K_C))/(\ZZ/2\ZZ) \simeq \TJac/(\ZZ/2\ZZ),\] where $\ZZ/2\ZZ$ acts on $\operatorname{Jac}(C)$ by $L \mapsto L^{-1}$ and on $H^0(C, K_C)$ by $\phi \mapsto - \phi$. Set $\Sigma_{\iota, Dol} (C, \Sl_2) \coloneqq \TJac$.
     \item The singular locus of $\MDol (C, \Gl_2)$, denoted $\Sigma_{\mathrm{Dol}} (C, \Gl_2)$, is  
     \[\lbrace (E, \Phi) | \, (E, \Phi)  \simeq (L, \phi) \oplus (L', \phi'), \, L, L' \in \operatorname{Jac}(C), \, \phi, \phi' \in H^0(C, K_C)  \rbrace,\]
     which is isomorphic to the second symmetric product of $\TJac$. Set $\Sigma_{\iota, Dol} (C, \Gl_2) \coloneqq \TJac \times \TJac$.
     \item The singular locus of $\Sigma (C, \Sl_2)$, denoted $\Omega (C, \Sl_2)$, is a set of $2^{2g}$ points.
     \item The singular locus of $\Sigma (C, \Gl_2)$, denoted $\Omega (C, \Gl_2)$, is isomorphic to $M(C, \Gl_1)$.
 \end{enumerate}
 \end{prop}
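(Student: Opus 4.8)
The plan is to combine the identification of $\Sing M$ with the strictly semisimple locus from \cref{prop:sing generalities}.\ref{item:singularlocusstrictlysemistable} with an explicit parametrisation of the reducible polystable rank-two objects, and then to read off both $\Sigma$ and its own singular locus $\Omega$ as quotients of smooth varieties by a Weyl-group action. All six parts follow the same template, the only difference being the ambient smooth variety $Y$ and the involution.

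First I would describe the reducible locus concretely. By \cref{prop:sing generalities}.\ref{item:singularlocusstrictlysemistable} every point of $\Sigma$ is a polystable object splitting as a direct sum of two rank-one objects of slope zero. A rank-one Betti object is a character in $\Hom(\pi_1(C), \CC^*) \simeq (\CC^*)^{2g}$, and a rank-one Dolbeault object is a pair $(L, \phi) \in \Jac \times H^0(C, K_C) \simeq \TJac$, using the Serre-duality identification of each cotangent space of $\Jac$ with $H^0(C, K_C)$. For $G = \Gl_2$ the two summands are arbitrary, whereas for $G = \Sl_2$ the conditions $\det = \mathcal{O}$ and $\operatorname{tr} = 0$ force the second summand to be the inverse of the first, namely $\chi \oplus \chi^{-1}$ in the Betti case and $(L, \phi) \oplus (L^{-1}, -\phi)$ in the Dolbeault case.

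Second I would make the quotient structure precise through GIT. Writing $T \subset G$ for the maximal torus and $W = N_G(T)/T$ for the Weyl group, the closed $G$-orbits of reducible objects are exactly the images of $\Hom(\pi_1(C), T)$ (respectively of the analogous torus-valued Higgs locus), and two such objects coincide in $M$ if and only if they differ by the residual $W$-action. This realises $\Sigma$ as $\Hom(\pi_1(C), T)/W$ (respectively its Dolbeault analogue). For $G = \Gl_2$ one has $T = (\CC^*)^2$ and $W = S_2$ swapping the two factors, giving $\operatorname{Sym}^2((\CC^*)^{2g})$ and $\operatorname{Sym}^2(\TJac)$; for $G = \Sl_2$ one has $T \simeq \CC^*$ and $W = \ZZ/2\ZZ$ acting by inversion, i.e.\ by $v \mapsto -v$ on $(\CC^*)^{2g}$ and by $(L, \phi) \mapsto (L^{-1}, -\phi)$ on $\TJac$. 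This establishes parts 1--4.

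Finally I would compute $\Omega = \Sing \Sigma$ as the singular locus of each quotient. In the $\Sl_2$ case $\Sigma = Y/(\ZZ/2\ZZ)$ with $Y$ smooth of dimension $2g \geq 4$ and the involution acting as $-1$ on the tangent space at each of its fixed points, so $\Sigma$ is singular precisely at the images of $\Fix$; these fixed points are the $2$-torsion characters of $(\CC^*)^{2g}$ and the pairs $(L_0, 0)$ with $L_0^2 = \mathcal{O}$ respectively, in both cases $2^{2g}$ points, giving part 5. In the $\Gl_2$ case $\Sigma = \operatorname{Sym}^2(Y) = (Y \times Y)/S_2$; off the diagonal the action is free, while transverse to the diagonal the swap acts as $-1$ on $T_y Y \simeq \CC^{2g}$, producing a $\CC^{2g}/\{\pm 1\}$ singularity, so $\Sing \Sigma$ is the diagonal $\Delta \simeq Y$, which is exactly $M(C, \Gl_1)$ (namely $(\CC^*)^{2g}$ on the Betti side and $\TJac$ on the Dolbeault side), yielding part 6. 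The main obstacle will be the second step: upgrading the local Luna-slice/orbit-type picture to the assertion that $\Sigma$ is globally, and as a reduced scheme, the Weyl quotient $\Hom(\pi_1(C), T)/W$ with the stated complex structure. For this I would use the local normal form of the GIT quotient near a polystable point, bijectivity up to $W$ of the representatives, and the stable isosingularity principle (\cref{Stableisosingularityprinciple}) to align the Betti, Dolbeault and O'Grady local models.
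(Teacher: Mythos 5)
Your proposal is correct and takes essentially the same route as the paper: the paper's entire proof is the one-line observation that the statement follows from \cref{prop:sing generalities}.\eqref{item:singularlocusstrictlysemistable}, i.e.\ from the identification of $\Sing M$ with the strictly semisimple locus. Your write-up simply makes explicit the steps the paper leaves to the reader, namely the parametrisation of reducible polystable objects as Weyl-group quotients of torus-valued data and the fixed-point/diagonal analysis identifying $\Omega$.
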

 \begin{proof}
The results follows easily from \cref{prop:sing generalities}.\eqref{item:singularlocusstrictlysemistable}.
 \end{proof}

 \begin{defn}\label{def:involution}
 \begin{enumerate}
    \item By \cref{prop:singulMB}, there exists a double cover $q\colon \Sigma_{\iota} \to \Sigma$ branched along $\Omega$.
     \item The involution $\iota\colon \Sigma_{\iota} \to \Sigma_{\iota}$ is the deck transformation of $q$.
     \item The largest open subset of $\Sigma_{\iota}$ where $q$ is \'{e}tale is denoted $\Sigma^{\circ}_{\iota} \coloneqq q^{-1}(\Sigma \setminus \Omega)$. 
     \item There exists a rank-one local system $\mathscr{L}$ on $\Sigma^{\circ} \coloneqq \Sigma \setminus \Omega$ such that
 \[q_* \mathbb{Q}_{\Sigma^{\circ}_{\iota}} = \mathbb{Q}_{\Sigma^{\circ}} \oplus \mathscr{L}.\]
 \end{enumerate} 
 \end{defn}
 \begin{prop}[Normal slices]\label{prop:normalslices}
 \begin{enumerate}
 \item\label{item:localmodelSigma} A slice $N_{\Sigma}$ normal to $\Sigma$ at a point in $\Sigma \setminus \Omega$ is locally analytically isomorphic to an affine cone over the incidence variety
 \[I_{2g-3} \coloneqq \bigg\lbrace ([x_i], [y_j]) \in \PP^{2g-3} \times \PP^{2g-3} \, \big| \, \sum^{2g-3}_{k=0} x_k y_k =0\bigg\rbrace\]
 with conormal bundle $\mathcal{O}(1,1) \coloneqq (\mathcal{O}_{\PP^{2g-3}}(1) \boxtimes \mathcal{O}_{\PP^{2g-3}}(1))|_{I_{2g-3}}$. 
 \item \label{item:localmodelOmega}  Let $(W, q)$ be a vector space of dimension $3$ endowed with a quadratic form $q$ of maximal rank, and $(V, \omega)$ be a symplectic vector space of dimension $2g$. Let $\Hom^{\omega}(W, V)$ be the cone of linear maps from $W$ to $V$ whose image is isotropic. Note that the group $SO(W)$ acts on $\Hom^{\omega}(W, V)$ by pre-composition. 
 
 Then a normal slice $N_{\Omega}$ through $\Omega$ is isomorphic to an affine cone over $\PP\Hom^{\omega}(W, V) \sslash SO(W)$.
\end{enumerate}
\end{prop}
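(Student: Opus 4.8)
The plan is to deduce both local models from O'Grady's analysis of the Mukai moduli space $\Mukai$ and the Luna-type slice description of a GIT quotient at a polystable point. By the stable isosingularity principle (\cref{Stableisosingularityprinciple}), the normal slice of $M$ along either orbit-type stratum agrees with the corresponding normal slice of $\Mukai$ for $v=2w$, $w^2=2(g-1)$, so it is enough to carry out the computation on any one of these spaces. At a polystable point $[F]$ the étale slice theorem, together with the quadraticity of the deformation space (\cref{lem:Quadraticityofdefomrationspaces}), identifies an analytic neighbourhood of $[F]$ with the GIT quotient $\mu^{-1}(0)\sslash\Aut(F)$, where $\mu\colon\operatorname{Ext}^1(F,F)\to\operatorname{Ext}^2(F,F)$ is the homogeneous quadratic obstruction (equivalently, the moment map for $\Aut(F)$); the directions tangent to the stratum through $[F]$ split off as a smooth factor, and the normal slice is the quotient of the transverse part of this quadratic cone. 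I would phrase everything so that the same computation applies uniformly to $\Sl_2$ and $\Gl_2$ and to both surface types once the stable isosingularity principle is invoked.

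For the first part I would take a point of $\Sigma\setminus\Omega$, namely $F=F_1\oplus F_2$ with $F_1\not\simeq F_2$ distinct stable factors of Mukai vector $w$ (for $\Sl_2$, $F=(L,\phi)\oplus(L^{-1},-\phi)$ with non-isomorphic summands). Here $\Aut(F)/(\text{centre})\simeq\Gm$, and $\operatorname{Ext}^1(F,F)$ decomposes into the invariant part $\operatorname{Ext}^1(F_1,F_1)\oplus\operatorname{Ext}^1(F_2,F_2)$, tangent to $\Sigma$ (these are the deformations of $L$ and $\phi$ realising $\Sigma_{\iota}\simeq\TJac$), and the cross terms $A\coloneqq\operatorname{Ext}^1(F_1,F_2)$, $B\coloneqq\operatorname{Ext}^1(F_2,F_1)$, on which $\Gm$ acts with opposite nonzero weight. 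A Riemann--Roch (or Mukai-pairing) computation gives $\dim A=\dim B=2g-2$, and Serre duality yields $B\simeq A^{*}$. The transverse obstruction is then a single quadric $\langle\,\cdot\,,\,\cdot\,\rangle\colon A\oplus A^{*}\to\CC$, so the normal slice is $\{(a,b)\in A\oplus A^{*}\mid\langle a,b\rangle=0\}\sslash\Gm$. Because the invariant ring of $A\oplus A^{*}$ is $\bigoplus_m\operatorname{Sym}^m A\otimes\operatorname{Sym}^m A^{*}$ cut by the ideal generated by $\langle a,b\rangle$, this quotient is exactly the affine cone over $I_{2g-3}\subset\PP(A)\times\PP(A^{*})=\PP^{2g-3}\times\PP^{2g-3}$ with conormal bundle $\mathcal{O}(1,1)$.

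For the second part I would take a point of $\Omega$, namely $F=F_0^{\oplus2}$ with $F_0$ stable of Mukai vector $w$ (for $\Sl_2$, $F_0=(L,0)$ with $L$ two-torsion). Then $\operatorname{Ext}^1(F,F)=V\otimes\operatorname{End}(\CC^2)$ with $V\coloneqq\operatorname{Ext}^1(F_0,F_0)$ symplectic of dimension $2g$; in the trace-zero setting the relevant deformations lie in $V\otimes\mathfrak{sl}_2$, on which $\PGl_2=\Aut(F)/(\text{centre})$ acts through its adjoint action on $W\coloneqq\mathfrak{sl}_2\simeq\CC^3$. The Killing form makes $W$ a quadratic space and identifies $\PGl_2\simeq SO(W)$; under $W\simeq W^{*}$ one reads $V\otimes\mathfrak{sl}_2=\Hom(W,V)$, and the obstruction $\mu$ becomes the $SO(W)$-moment map, whose vanishing is precisely the isotropy condition $\omega(\phi(w),\phi(w'))=0$, i.e.\ $\Hom^{\omega}(W,V)$. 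Since $\Omega$ is zero-dimensional in the $\Sl_2$ case there are no tangent-to-stratum directions to discard, so the normal slice is $\Hom^{\omega}(W,V)\sslash SO(W)$, which is the affine cone over $\PP\Hom^{\omega}(W,V)\sslash SO(W)$ for the residual $\Gm$ scaling $\phi$.

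The hard part will be the passage from the abstract quadratic-cone/quiver description to these two explicit varieties: one must pin down the $\Aut(F)$-module structure of $\operatorname{Ext}^1(F,F)$, verify $\dim\operatorname{Ext}^1(F_1,F_2)=2g-2$ and the self-duality $B\simeq A^{*}$ via Riemann--Roch and Serre duality, and match the obstruction map with the right moment-map quadrics, cross-checking against O'Grady's explicit slices in \cite{OGrady99} and \cite{KaledinLehnSorger2006}. A secondary subtlety is the $\Gm$-weight bookkeeping: the torus acts with weight $2$ on the cross terms, so the coordinate ring of the cone is generated in degree $2$ rather than degree $1$, which is exactly why the non-degree-one version of the cone computation is needed downstream.
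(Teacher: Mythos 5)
Your proposal is correct and takes essentially the same route as the paper: both reduce to the Mukai moduli space via the stable isosingularity principle (\cref{Stableisosingularityprinciple}) and then invoke the local description of $M(S,v)$ at a polystable point. The only difference is expository: the paper disposes of the local computation by citing \cite[(3.3.2), (1.5.1)]{OGrady99} (together with \cite[Proposition 3.2.(2)]{ChoyKiem07} and \cref{lem:Quadraticityofdefomrationspaces}), whereas you unpack those citations by re-deriving the two slice models from the Luna slice and the quadratic obstruction cone $\mu^{-1}(0)\sslash \Aut(F)$ --- a correct derivation which is precisely the content of the results the paper cites.
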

\begin{proof}
The local models have been described in \cite[(3.3.2)]{OGrady99} (see also \cite[Proposition 3.2.(2)]{ChoyKiem07}) and in \cite[(1.5.1)]{OGrady99} (together with \cref{lem:Quadraticityofdefomrationspaces}) for $M(S,v)$ with $v=(2,0,-2c)$. 
The description holds for $M$ too by the stable isosingularity principle (\cref{Stableisosingularityprinciple}).
\end{proof}

\subsection{Geometry of the desingularization}\label{sec:desingularization}
Inspired by \cite{Kirwan85}, O'Grady exhibits a desingularization of the Mukai moduli spaces $M(S,v)$ of semistable sheaves on a projective K3 surface $S$ with Mukai vector $v=(2,0,-2c) \in H^*_{\mathrm{alg}}(S, \ZZ)$. By the stable isosingularity principle (cf \S \ref{sec:stableisosingularityprinciple}) the same sequence of blow-ups gives a desingularization of $M$. In this section, we recall the geometry of the exceptional locus, and we compute the E-polynomials of its strata.
\begin{prop}[Kirwan--O'Grady desingularization]
Let 
\begin{itemize}
    \item $\pi_R \colon R \to M$ be the blow-up of $M$ along $\Omega$;
    \item $\pi_S \colon S \to R$ be the blow-up of $R$ along $\Sigma_R \coloneqq \pi^{-1}_{R, *} \Sigma$;
    \item $\pi_T \colon T \to S$ be the blow-up of $S$ along its singular locus.
\end{itemize}
Then the composition $\pi \coloneqq \pi_T \circ \pi_S \circ \pi_R\colon T \to M$ is a log resolution of $M$\footnote{In genus 2 the (unique) symplectic resolution of $M$ can be obtained by contracting a $\PP^2$-bundle in $S$. See \cite[Proposition 8.6]{FelisettiMauri2020} where $\pi_R$ and $\pi_S$ are denoted $\eta$ and $\zeta$ respectively. In particular, the resolution $\pi$ is not symplectic.}.
\end{prop}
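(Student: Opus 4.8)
The plan is to check the statement analytic-locally on the normal slices of \cref{prop:normalslices}, and then to import O'Grady's resolution of the Mukai moduli space via stable isosingularity. Smoothness of $T$ and the simple normal crossing property of the exceptional locus are analytic-local conditions, and by \cref{Stableisosingularityprinciple} (more precisely, by the analytic triviality in the normal direction of the orbit-type stratification established in its proof) a neighbourhood of a point of a stratum splits analytically as $N \times T_x(\text{stratum})$, where $N$ is the corresponding normal slice. Since $\Omega$ and the strict transform of $\Sigma$ correspond under this splitting to the vertex of $N$ times the stratum, each of the three blow-ups is locally the product of the blow-up of the vertex of a cone with a smooth factor. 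Thus it suffices to resolve the two cones $N_\Sigma$ and $N_\Omega$ and to track the exceptional divisors there.

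First I would treat the open stratum $\Sigma^\circ = \Sigma\setminus\Omega$, where $N_\Sigma$ is the affine cone over the incidence variety $I_{2g-3}$ with conormal bundle $\mathcal{O}(1,1)$. The variety $I_{2g-3}$ is smooth, since the partials of $\sum_k x_k y_k$ are $(y_k, x_k)$ and have no common zero on $\PP^{2g-3}\times\PP^{2g-3}$; hence $N_\Sigma$ has an isolated singularity at its vertex, and blowing it up yields the total space of a line bundle over $I_{2g-3}$, which is smooth with smooth exceptional divisor $I_{2g-3}$. Near $\Sigma^\circ$ the map $\pi_R$ is an isomorphism, $\pi_S$ realises exactly this blow-up of the vertex, and $\pi_T$ is an isomorphism; so $\pi$ is already a log resolution away from $\Omega$.

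The essential case is the deepest stratum $\Omega$, where $N_\Omega$ is the affine cone over the GIT quotient $\PP\Hom^{\omega}(W,V)\sslash SO(W)$ and all three blow-ups contribute. Here one runs Kirwan's partial desingularization procedure, refined by O'Grady: $\pi_R$ blows up the vertex, $\pi_S$ removes the singularities lying along the strict transform of $\Sigma$, and $\pi_T$ resolves the residual singular locus. Because the local models and the poset of strata of $M$ agree with those of $M(S,v)$ with $v=(2,0,-2c)$, the verification that each successive centre is smooth and that $S$ and then $T$ become smooth is precisely the computation of \cite{OGrady99} (reviewed in \cite{KiemYoo08}), which transfers unchanged to $M$. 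It then remains to check that the total transform $\mathcal{D}_\Omega\cup\mathcal{D}_\Sigma\cup\mathcal{D}_T$ of the three exceptional divisors is simple normal crossing on this local model, which gives the log resolution property.

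The hard part is exactly this analysis over $\Omega$: one must confirm that, after blowing up the vertex of the cone over $\PP\Hom^{\omega}(W,V)\sslash SO(W)$, the strict transform of $\Sigma$ and the third centre are smooth, and that the three exceptional divisors meet transversally. This is the delicate content of O'Grady's construction, and the role of the stable isosingularity principle is precisely to let us invoke it verbatim for the Betti and Dolbeault moduli spaces rather than re-deriving the local geometry from scratch.
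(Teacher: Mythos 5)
Your proposal is correct and takes essentially the same approach as the paper: the paper's proof consists precisely of citing \cite[1.8.3]{OGrady99} together with the stable isosingularity principle (\cref{Stableisosingularityprinciple}), whose accompanying observation on analytically normally trivial stratifications is exactly the transfer mechanism you describe. Your explicit treatment of the slice over $\Sigma^{\circ}$ and of the local product structure of the blow-ups just spells out what those two citations leave implicit.
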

\begin{proof}
It follows from \cite[1.8.3]{OGrady99} and \cref{Stableisosingularityprinciple}.
\end{proof}

\begin{notation}
\begin{itemize}
    \item $D_1$, $D_2$ and $D_3$ are (the strict transform of) the exceptional divisors in $T$ of the blow-ups $\pi_R$, $\pi_S$ and $\pi_T$ respectively. 
    \item $D_{ij} \coloneqq D_i \cap D_j$ and $D_{123} \coloneqq D_1 \cap D_2 \cap D_3$ are (smooth closed) strata of the exceptional locus of $\pi$. 
\item $I_{k} \coloneqq \{ ([x_0: \ldots: x_k],[y_0:\ldots:y_k])\in \PP^k \times \PP^k | \sum^k_{i=0} x_iy_i=0\}$.
    \item $\Hom^{\omega}_k(W, V)$ is the subspace of linear maps in $\Hom^{\omega}(W, V)$ of rank $\leq k$.
    \item $\Gr^{\omega}(k,V)$ is the Grassmanian of $k$-dimensional linear subspaces of $V$, isotropic with respect to the symplectic form $\omega$.
    \item $\hat{\PP}^5$ is the blow-up of $\PP^5 \simeq \PP(S^2(W))$ (space of quadratic forms on $W$) along $\PP^2$ (locus of quadratic form of rank $1$).
    \item $\hat{Q}$ is the blow-up of $Q \subset \PP(S^2(W))$ (space of degenerate quadratic forms on $W$) along $\PP^2$ (locus of quadratic form of rank $1$).
\end{itemize}
\end{notation}
\begin{prop}[Geometry of the blow-ups $\pi_R$, $\pi_S$ and $\pi_T$]\label{prop:geometryblowup} \begin{enumerate} 
\item\label{item:exclocusoverOmega} The preimages $\pi^{-1}_R(\Omega)$, $\pi^{-1}_S(\Omega)$ and $\pi^{-1}_T(\Omega)$ are trivial fibrations over $\Omega$;
    \item \label{item:OmegaR}The exceptional locus $\Omega_R$ of $\pi_R$ is isomorphic to $(\PP\Hom^{\omega}(W, V)\sslash SO(W))\times \Omega$.
    \item \label{item:transversesliceSigmaROmegaR}Let $I'_{2g-3}$ be the quotient of $I_{2g-3}$ by the involution which exchanges the coordinates $x_i$ and $y_i$. A slice normal to $\Sigma_R \cap \Omega_R \simeq (\PP\Hom^{\omega}_1(W, V)\sslash SO(W))\times \Omega \simeq \PP^{2g-1}\times \Omega$ in $\Omega_R$ is locally analytically isomorphic to an affine cone over $I'_{2g-3}$.
    \item \label{item:DeltaS} The singular locus $\Delta_S$ of $S$ is the strict transform of $(\PP\Hom^{\omega}_2(W, V)\sslash SO(W))\times \Omega \subseteq \Omega_R$ via $\pi_R$, which is isomorphic to a $\PP^2$-bundle over $\Gr^{\omega}(2,V)\times \Omega$.
    \item\label{item:normalsliceDeltaS} A slice normal to $\Delta_S$ in $S$ is locally analytically isomorphic to the quotient $\CC^{2g-3}/\pm 1$.
\end{enumerate}
\end{prop}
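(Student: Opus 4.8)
The plan is to deduce every assertion from O'Grady's explicit analysis of the desingularization of the Mukai moduli space $M(S,v)$ with $v=(2,0,-2c)$ in \cite{OGrady99}, and to transport it to $M$ via the stable isosingularity principle (\cref{Stableisosingularityprinciple}). Since $M$ and $M(S,v)$ are stably isosingular through Whitney stratifications that are analytically trivial in the normal direction, and since $\pi_R$, $\pi_S$, $\pi_T$ are the blow-ups along $\Omega$, $\Sigma_R$ and $\Sing S$ --- exactly the strict transforms of the loci blown up by O'Grady --- every statement below is either local along, or a normal-slice statement transverse to, a stratum, and is therefore invariant under stable isosingularity. Thus the proposition reduces to verifying the corresponding assertions for the local models of \cref{prop:normalslices}: the affine cone over $\PP\Hom^{\omega}(W,V)\sslash SO(W)$ transverse to $\Omega$, and the affine cone over $I_{2g-3}$ transverse to $\Sigma$.

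For \ref{item:exclocusoverOmega} and \ref{item:OmegaR} I would use that the stratification is analytically trivial in the normal direction along $\Omega$ (\cref{prop:isosingularity}, via \cite{Mayrand2018}): a neighbourhood of $\Omega$ is locally analytically isomorphic to $N_\Omega\times\Omega$, with $N_\Omega$ the affine cone over $\PP\Hom^{\omega}(W,V)\sslash SO(W)$. Each of $\pi_R,\pi_S,\pi_T$ is a blow-up along a locus compatible with this product, so the preimages $\pi_R^{-1}(\Omega)$, $\pi_S^{-1}(\Omega)$, $\pi_T^{-1}(\Omega)$ are products with $\Omega$, giving \ref{item:exclocusoverOmega}. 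In particular $\pi_R$ blows up the vertex of the cone $N_\Omega$, whose exceptional fibre is the base $\PP\Hom^{\omega}(W,V)\sslash SO(W)$; multiplying by $\Omega$ yields $\Omega_R\cong(\PP\Hom^{\omega}(W,V)\sslash SO(W))\times\Omega$, which is \ref{item:OmegaR}.

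Statements \ref{item:transversesliceSigmaROmegaR} and \ref{item:DeltaS} amount to unwinding the GIT quotient along the rank stratification of $\Hom^{\omega}(W,V)$. The quotient is governed by the $SO(W)$-invariant quadratic map $A\mapsto\mu(A)\in S^2V$ obtained by contracting the two $W$-indices of $A\otimes A$ against the form on $W$; its image consists of symmetric forms supported on isotropic subspaces of $V$, and $\rank\mu(A)=\rank A$. A rank-one $A$ gives $\mu(A)=v\otimes v$, so $\PP\Hom^{\omega}_1(W,V)\sslash SO(W)\cong\PP(V)\cong\PP^{2g-1}$; after taking the product with $\Omega$ this is the stratum $\Sigma_R\cap\Omega_R$. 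A rank-two $A$ gives a rank-two form supported on an isotropic plane $U\in\Gr^{\omega}(2,V)$, and the fibre over $U$ is $\PP(S^2U)\cong\PP^2$; hence $\PP\Hom^{\omega}_2(W,V)\sslash SO(W)$ is a $\PP^2$-bundle over $\Gr^{\omega}(2,V)$, and $\Delta_S$ is the strict transform of its product with $\Omega$, which is \ref{item:DeltaS}. For the transverse slice in \ref{item:transversesliceSigmaROmegaR}, I would combine the cone-over-$I_{2g-3}$ model of the slice normal to $\Sigma$ (\cref{prop:normalslices}.\ref{item:localmodelSigma}) with the observation that the passage from $\Sigma$ to the stratum $\Sigma_R\cap\Omega_R$ inside $\Omega_R$ introduces the residual $\ZZ/2$ coming from the Weyl group of $SO(W)$, i.e.\ the nontrivial component of the normaliser of a maximal torus $SO(2)\subset SO(W)$, which inverts it and so exchanges the two rulings $[x_i]\leftrightarrow[y_i]$ of $I_{2g-3}$; the quotient is the cone over $I'_{2g-3}$.

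Finally, \ref{item:normalsliceDeltaS} asserts that after the blow-ups $\pi_R$ and $\pi_S$ the surviving singularity transverse to $\Delta_S$ is the quotient $\CC^{2g-3}/\pm1$, resolved by the last blow-up $\pi_T$; this follows from the same local computation in \cite{OGrady99}, transported by \cref{Stableisosingularityprinciple}. The main obstacle is precisely the normal-slice bookkeeping in \ref{item:transversesliceSigmaROmegaR} and \ref{item:normalsliceDeltaS}: one must control how the transverse cone singularities of $\Sigma$ and $\Omega$ interact under the successive blow-ups, and verify that the residual $\ZZ/2$-actions --- all descending from Weyl-type symmetries of the stabiliser $SO(W)$ --- produce exactly the quotient $I'_{2g-3}$ and the slice $\CC^{2g-3}/\pm1$. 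Pinning down these identifications, rather than merely invoking O'Grady, is the delicate point, and it is where the analytic normal triviality of the stratification is used in an essential way.
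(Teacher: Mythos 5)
Your treatment of parts \ref{item:OmegaR}--\ref{item:normalsliceDeltaS} is essentially the paper's: both reduce to O'Grady's explicit local computations ((1.7.12), (1.7.16) and (3.5.1) of \cite{OGrady99}, or equivalently \cite[Proposition 3.2]{ChoyKiem07}), transported to $M$ by the stable isosingularity principle (\cref{Stableisosingularityprinciple}) together with the local models of \cref{prop:normalslices}; your sketch of the rank stratification of $\PP\Hom^{\omega}(W,V)\sslash SO(W)$ and of the residual involution coming from the component group $O(2)/SO(2)$ of the stabiliser is a correct account of what those references contain.

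The gap is in part \ref{item:exclocusoverOmega} (and it propagates to the global product decomposition asserted in part \ref{item:OmegaR}) for $G=\Gl_2$. Analytic triviality of the stratification in the normal direction is a \emph{local} statement: it yields only that $\pi_R^{-1}(\Omega)$, $\pi_S^{-1}(\Omega)$ and $\pi_T^{-1}(\Omega)$ are \emph{locally} trivial fibrations over $\Omega$, not trivial ones. For $G=\Sl_2$ this costs nothing, since $\Omega(C,\Sl_2)$ is a finite set of $2^{2g}$ points. But for $G=\Gl_2$ the base is $\Omega(C,\Gl_2)\simeq M(C,\Gl_1)$ (\cref{prop:singulMB}), which is connected and not simply connected, so a locally trivial fibration over it could a priori have nontrivial monodromy; nothing in your argument excludes this. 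The paper closes exactly this point by a global argument: pulling back along the \'{e}tale cover $\tau\colon M(C,\Sl_2)\times M(C,\Gl_1)\to M(C,\Gl_2)$ trivialising \eqref{albmap}, the blow-ups commute with \'{e}tale base change, and the preimage of $\Omega(C,\Gl_2)$ becomes $\bigsqcup_{x\in\Omega(C,\Sl_2)}\pi_{T(C,\Sl_2)}^{-1}(x)\times M(C,\Gl_1)$; since the deck group $\Gamma$ acts simply transitively on the $2^{2g}$ points of $\Omega(C,\Sl_2)$, each component maps isomorphically onto the quotient, whence $\pi_{T(C,\Gl_2)}^{-1}(\Omega(C,\Gl_2))\simeq \pi_{T(C,\Sl_2)}^{-1}(x)\times M(C,\Gl_1)$ is genuinely a product. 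You need this (or an equivalent global input, e.g.\ triviality of the monodromy) both to assert triviality in \ref{item:exclocusoverOmega} and to write $\Omega_R\simeq(\PP\Hom^{\omega}(W,V)\sslash SO(W))\times\Omega$ in \ref{item:OmegaR} rather than merely a locally trivial bundle over $\Omega$.
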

\begin{proof}
Since $\Omega(C, \Sl_2)$ is a collection of $2^{2g}$ points, \eqref{item:exclocusoverOmega} obviously holds. Consider now the \'{e}tale cover $\tau\colon M(C, \Sl_n) \times M(C, \Gl_1) \to M(C, \Gl_n)$ trivialising \eqref{albmap}. For some (or any) $x \in \Omega(C, \Sl_n)$ we have
\[\pi_{T(C, \Sl_n)}^{-1}(x) \times \Omega(C, \Gl_n) \simeq \pi_{T(C, \Sl_n)}^{-1}(x) \times M(C, \Gl_1) \simeq \pi_{T(C, \Gl_n)}^{-1}(\Omega(C, \Gl_n)),\]
since $\tau$ is \'{e}tale. Thus, \eqref{item:exclocusoverOmega} holds for $G=\Gl_2$ too.

\eqref{item:OmegaR}, \eqref{item:transversesliceSigmaROmegaR}, \eqref{item:DeltaS}, \eqref{item:normalsliceDeltaS} follow instead from \cref{prop:normalslices}.\eqref{item:localmodelOmega}, \cite[(1.7.12) and (1.7.16)]{OGrady99}, \cite[(3.5.1)]{OGrady99} and \cite[(3.5.1)]{OGrady99} respectively; see alternatively the proof of \cite[Proposition 3.2]{ChoyKiem07}.
\end{proof}


\begin{prop}[The exceptional divisors of $\pi_T$]\label{prop:exceptionaldivisors}
\begin{enumerate}
\item \label{item:D1} $D_1$ is a $\hat{\PP}^5$-bundle over $\Gr^{\omega}(3,V) \times \Omega$.
    \item \label{item:D20} 
    Let $\mathbb{I}_{2g-3}$ be the (Zariski locally trivial) $I_{2g-3}$-bundle in $\PP(\mathcal{E}) \times \PP(\mathcal{E}^*)$ over $\Sigma^{\circ}$. Then $D^{\circ}_2$ is the quotient of $\mathbb{I}_{2g-3}$ by the involution $\iota'$
  \begin{align*}
        \iota'\colon \PP(\mathcal{E}) \times \PP(\mathcal{E}^*) & \to \PP(\mathcal{E}) \times \PP(\mathcal{E}^*),\\
        (v, [x_i], [y_j]) & \mapsto (\iota(v), [y_j], [x_i]),
  \end{align*}
  extending the involution $\iota$ on $\Sigma^{\circ}$ defined in \cref{def:involution}.
   \item \label{item:D3} $D_3$ is a $\PP^{2g-4}$-bundle over a (Zariski locally trivial) $\PP^2$-bundle over $\Gr^{\omega}(2,V) \times \Omega$.
   \item \label{item:D13} $D_{13}$ is a $\hat{Q}$-bundle over $\Gr^{\omega}(3,V) \times \Omega$.
\end{enumerate}
\end{prop}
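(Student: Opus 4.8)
The plan is to reduce every assertion to O'Grady's local computations for the Mukai moduli space $M(S,v)$ with $v=(2,0,-2c)$ and to transport them to $M$ by the stable isosingularity principle (\cref{Stableisosingularityprinciple}). Since $M$ and $M(S,v)$ are stably isosingular through stratifications that are analytically trivial in the normal direction, the three blow-ups $\pi_R,\pi_S,\pi_T$ produce exceptional divisors with the same fibre type, while the bases of the resulting fibrations are the global strata $\Gr^\omega(k,V)\times\Omega$ and $\Sigma^\circ$ already identified in \cref{prop:geometryblowup}. Concretely, I would treat one blow-up at a time, compute the projectivised normal cone of its centre from the normal-slice descriptions in \cref{prop:normalslices} and \cref{prop:geometryblowup}, and then globalise; the four parts split naturally according to whether the divisor lies over $\Omega$ (parts \eqref{item:D1} and \eqref{item:D13}), over $\Sigma^\circ$ (part \eqref{item:D20}), or over $\Delta_S$ (part \eqref{item:D3}).

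For \eqref{item:D1} the input is \cref{prop:normalslices}.\eqref{item:localmodelOmega}: a slice normal to $\Omega$ is an affine cone over $\PP\Hom^\omega(W,V)\sslash SO(W)$, where $\dim W=3$. Sending a full-rank $A\colon W\to V$ to its isotropic three-dimensional image $\operatorname{im}A$ fibres $\PP\Hom^\omega(W,V)\sslash SO(W)$ over $\Gr^\omega(3,V)$, and O'Grady's Kirwan desingularization \cite{OGrady99} identifies the fibre of the resolved quotient with the blow-up $\hat\PP^5=\Bl_{\PP^2}\PP^5$ of the space $\PP(S^2W)$ of quadratic forms on $W$ along the rank-one Veronese $\PP^2$; I would then check Zariski local triviality over $\Gr^\omega(3,V)\times\Omega$ and that this coincides with the strict transform $D_1$. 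For \eqref{item:D13}, the last exceptional divisor $D_3$ meets $D_1$ exactly over the degenerate quadratic forms, i.e. the image of the cubic $Q\subset\PP(S^2W)$ cut out by the discriminant, whose strict transform in $\hat\PP^5$ is $\hat Q$; fibring over $\Gr^\omega(3,V)\times\Omega$ yields the $\hat Q$-bundle. The dimension count $\dim\Gr^\omega(3,V)=6g-12$, giving $\dim D_1=6g-7$ and $\dim D_{13}=6g-8$, is a useful consistency check.

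For \eqref{item:D20} I would work over $\Sigma^\circ$. By \cref{prop:normalslices}.\eqref{item:localmodelSigma}, $M$ is locally an affine cone over $I_{2g-3}$ with conormal $\mathcal O(1,1)$, so $\pi_S$ replaces the cone vertex by a copy of $I_{2g-3}$, producing an $I_{2g-3}$-fibration over $\Sigma^\circ$. Globalising, the deformation theory of the polystable object $(L,\phi)\oplus(L^{-1},-\phi)$ identifies the two projective factors with $\PP(\mathcal E)$ and $\PP(\mathcal E^*)$, where $\mathcal E$ is the rank-$(2g-2)$ bundle on the étale cover $\Sigma_\iota^\circ$ with fibre $\operatorname{Ext}^1\!\big((L,\phi),(L^{-1},-\phi)\big)$; the duality of the factors is Serre duality and the incidence relation $\sum x_iy_i=0$ is the induced pairing. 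The deck involution $\iota$ of \cref{def:involution} exchanges the two stable summands, hence swaps $\mathcal E\leftrightarrow\mathcal E^*$ and the two factors, which is precisely $\iota'$; descending gives $D_2^\circ=\mathbb I_{2g-3}/\iota'$. For \eqref{item:D3}, \cref{prop:geometryblowup}.\eqref{item:DeltaS} and \eqref{item:normalsliceDeltaS} say that the centre $\Delta_S$ is the smooth $\PP^2$-bundle over $\Gr^\omega(2,V)\times\Omega$ and that $S$ has transverse $\CC^{2g-3}/\pm1$ singularities along it. Since $\CC^{2g-3}/\pm1$ is the affine cone over the second Veronese $\PP^{2g-4}\hookrightarrow\PP(S^2\CC^{2g-3})$, blowing up its vertex has exceptional fibre $\PP^{2g-4}$ and resolves the singularity, so $D_3$ is a $\PP^{2g-4}$-bundle over $\Delta_S$, which is the asserted iterated bundle.

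The main obstacle I expect is the globalisation in \eqref{item:D20}: the local cone-over-$I_{2g-3}$ picture only furnishes an $I_{2g-3}$-fibration abstractly, and the real work is to pin down the bundles $\mathcal E,\mathcal E^*$, to verify that the incidence and duality structure globalises Zariski-locally over $\Sigma_\iota^\circ$ rather than merely analytically, and above all to track the involution $\iota'$ so that the quotient $\mathbb I_{2g-3}/\iota'$ is well defined and smooth. A secondary difficulty is the bookkeeping across the three successive blow-ups: each $D_i$ is a strict transform, so I must ensure that passing from the exceptional divisor of an individual blow-up to its proper transform in $T$ preserves the stated fibre type, and that $D_{13}$ is computed inside the resolved fibre $\hat\PP^5$ rather than the singular $\PP(S^2W)$. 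Both points are ultimately governed by O'Grady's local analysis in \cite{OGrady99}, but assembling them into the global bundle statements is the substantive step.
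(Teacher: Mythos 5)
Your proposal is correct and takes essentially the same route as the paper: both transport O'Grady's and Choy--Kiem's analysis of the Mukai moduli space to $M$ via the stable isosingularity principle (\cref{Stableisosingularityprinciple}) and then globalise the fibrewise identifications (the $\hat{\PP}^5$ and $\hat{Q}$ of quadrics on $W$, the Veronese $\PP^{2g-4}$ coming from the transverse $\CC^{2g-3}/\pm 1$ singularity, and the $\operatorname{Ext}^1$-incidence picture with the swap involution for $D^{\circ}_2$) over $\Gr^{\omega}(m,V)\times\Omega$, $\Delta_S$ and $\Sigma^{\circ}$. The paper merely packages this globalisation into universal-bundle diagrams over $\Gr^{\omega}(m,V)$ and cites \cite[(3.1.1), (3.5.1)]{OGrady99} and \cite[Proposition 3.2]{ChoyKiem07} for the isomorphisms whose content your sketch reconstructs.
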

\begin{proof}
Let $\mathcal{U}_m$ be the universal bundle over $\Gr^{\omega}(m, V)$, with $m=2,3$, and $\Hom_k(W, \mathcal{U}_m)$ be the subbundle of $\Hom(W, \mathcal{U}_m)$ of rank $\leq k$. The quotient space $\PP \Hom_k(W, \mathcal{U}_m)\sslash SO(W)$ is isomorphic to the space of quadrics $\PP(S^2_k \mathcal{U}_m)$ of rank $\leq k$. There are obvious forgetful maps
\begin{align*}
    f_3\colon & \PP\Hom(W, \mathcal{U}_3) \to \PP\Hom^{\omega}(W,V)\\
    f_2 \colon & \PP\Hom_2(W, \mathcal{U}_2) \to \PP\Hom^{\omega}_2(W,V),\\
\end{align*}
 which induces the following diagrams
\[
\begin{tikzpicture}[baseline= (a).base]
\node[scale=0.9] (a) at (0,0){
\begin{tikzcd}
   \Bl_{\PP(S^2_1 \mathcal{U}_3)}\PP(S^2_2 \mathcal{U}_3)\times \Omega \ar[r, hook]  \arrow[rrr, bend left=15, "\simeq"]
   \arrow[ddr, "\hat{Q}-\text{bundle}"'] & \Bl_{\PP(S^2_1 \mathcal{U}_3)}\PP(S^2 \mathcal{U}_3)\times \Omega\ar[r, "\simeq"] \ar[d, "birat."'] &
    D_1 \ar[d, "birat."]  & D_{13} = \mathrm{Exc}(\pi_T|_{D_1}) \ar[l, hook'] \ar[d, "\PP^{2g-5}- \text{bundle}"]\\
    & \PP(S^2 \mathcal{U}_3) \times \Omega \ar[r, "f_3\sslash SO(W)"]\ar[d, "\PP^5-\text{bundle}"] &
    \Omega_S \ar[d, "birat."] & \Delta_S \ar[l, hook'] \\
    & \Gr^{\omega}(3, V) \times \Omega & \Omega_R. &
\end{tikzcd}
};
\end{tikzpicture}
\]
\[
\begin{tikzpicture}[baseline= (a).base]
\node[scale=0.9] (a) at (0,0){
\begin{tikzcd}
  & \quad \quad \quad & D_3 \ar[d, "\PP^{2g-4}- \text{bundle}"]\\
   \PP(S^2 \mathcal{U}_2)\times \Omega \ar[rr, "\simeq"] \ar[d, "\simeq"] &&
    \Delta_S \ar[d, "birat."]\\
     (\PP\Hom(W, \mathcal{U}_2)\sslash SO(W))  \times \Omega \ar[rr, "f_2\sslash SO(W)"]\ar[d, "\PP^2-\text{bundle}"] & &
    (\PP\Hom^{\omega}_2(W,V)\sslash SO(W)) \times \Omega \ar[d, "birat."]\\
     \Gr^{\omega}(2, V) \times \Omega & & \Omega_R. &
\end{tikzcd}
};
\end{tikzpicture}
\]
A proof of the isomorphisms above is provided in \cite[(3.1.1) and (3.5.1)]{OGrady99}; see alternatively \cite[Proposition 3.2]{ChoyKiem07}. This shows \eqref{item:D1}, \eqref{item:D3}, \eqref{item:D13}. To show \eqref{item:D20}, one can repeat the argument of \cite[Proposition 3.2.(2)]{ChoyKiem07} verbatim.
\end{proof}

\begin{prop}\label{prop;HodgeTateI} $\Gr^{\omega}(m,V)$ and the fibres of $\Delta_S$, $D_1$, $D_3$, $D_{13}$ and $\Omega_S$ over $\Omega$ have pure cohomology of Hodge-Tate type. In particular, they do not have odd cohomology. 
Their E-polynomials are
\begin{align*}
    E(\Gr^{\omega}(2,V)) & = \frac{(1-q^{2g-2})(1-q^{2g})}{(1-q)(1-q^2)}\\
    E(\Gr^{\omega}(3,V)) & = \frac{(1-q^{2g-4})(1-q^{2g-2})(1-q^{2g})}{(1-q)(1-q^2)(1-q^3)}\\
    E(\Delta_S) & = \frac{(1-q^3)(1-q^{2g-2})(1-q^{2g})}{(1-q)^2(1-q^2)} \cdot E(\Omega)\\
    E(D_1) & = \frac{(1-q^4)(1-q^{2g-4})(1-q^{2g-2})(1-q^{2g})}{(1-q)^3(1-q^2)} \cdot E(\Omega)\\
    E(D_3) & = \frac{(1-q^{3})(1-q^{2g-3})(1-q^{2g-2})(1-q^{2g})}{(1-q)^3 (1-q^2)} \cdot E(\Omega)\\
    E(D_{13}) & = \frac{(1-q^{3})(1-q^{2g-4})(1-q^{2g-2})(1-q^{2g})}{(1-q)^3 (1-q^2)} \cdot E(\Omega)\\
    E(\Omega_S) & =  E(D_1)-E(\Delta_S) \cdot \sum^{2g-6}_{i=0} q^{i+1} \\
    & = \frac{(1-q^{2g-2})(1-q^{2g-1})(1-q^{2g})}{(1-q)^2(1-q^2)} \cdot E(\Omega). 
\end{align*}
\end{prop}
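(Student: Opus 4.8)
The plan is to reduce every space in the statement to an iterated construction of \emph{projective bundles} and \emph{blow-ups along smooth centres}, starting from the isotropic Grassmannians $\Gr^{\omega}(m,V)$, and then to exploit that all these operations preserve the class of varieties admitting a decomposition into affine cells. Concretely, \cref{prop:geometryblowup} and \cref{prop:exceptionaldivisors} present the fibre over $\Omega$ of each of $\Delta_S$, $D_1$, $D_3$, $D_{13}$ as a tower of Zariski-locally trivial projective bundles and blow-ups over $\Gr^{\omega}(2,V)$ or $\Gr^{\omega}(3,V)$. Since $\Gr^{\omega}(m,V)=\mathrm{Sp}(V)/P$ is a projective homogeneous space, it carries a Bruhat (Schubert) decomposition into cells isomorphic to affine spaces; hence its cohomology is pure, of Hodge--Tate type, concentrated in even degrees, and free. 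Projective bundles (Zariski-locally trivial, as asserted in the two propositions) and blow-ups along smooth centres with this same property again admit affine cell decompositions, so purity, Hodge--Tate type and the vanishing of odd cohomology propagate to all the fibres in question. In particular the E-polynomials are computed by additivity over cells and multiplicativity over Zariski-locally trivial bundles, together with the blow-up formula $E(\Bl_Z X)=E(X)+E(Z)\,(E(\PP^{c-1})-1)$ for a smooth centre $Z$ of codimension $c$.

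First I would record $E(\Gr^{\omega}(m,V))$. The incidence variety $\{(\ell,U)\mid \ell\subset U\}$ of an isotropic line inside an isotropic $m$-plane maps to $\Gr^{\omega}(m,V)$ with fibre $\PP^{m-1}$, and to $\PP^{2g-1}=\Gr^{\omega}(1,V)$ with fibre $\Gr^{\omega}(m-1,V')$, where $V'=\ell^{\perp}/\ell$ is symplectic of dimension $2g-2$ (every line is automatically isotropic). Comparing the two projections gives the recursion $E(\Gr^{\omega}(m,V))=\tfrac{1-q^{2g}}{1-q^{m}}\,E(\Gr^{\omega}(m-1,V'))$ with base case $E(\Gr^{\omega}(1,V))=\tfrac{1-q^{2g}}{1-q}$. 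Iterating reproduces the two closed expressions for $m=2,3$, whose degrees $4g-5$ and $6g-12$ match $\dim\Gr^{\omega}(m,V)$.

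Next I would assemble the four bundle E-polynomials. As $\Delta_S$ is a $\PP^2$-bundle over $\Gr^{\omega}(2,V)\times\Omega$ and $D_3$ a $\PP^{2g-4}$-bundle over $\Delta_S$, their E-polynomials are the stated products. For $D_1$ and $D_{13}$ I need the fibre classes $E(\hat{\PP}^5)$ and $E(\hat{Q})$. The blow-up formula with $Z=\PP^2\subset\PP^5$ of codimension $3$ gives $E(\hat{\PP}^5)=E(\PP^5)+E(\PP^2)(E(\PP^2)-1)=\frac{(1-q^3)(1-q^4)}{(1-q)^2}$. For $\hat{Q}$ I would use that the blow-up of the determinantal cubic $Q\subset\PP(S^2W)$ along its rank-one Veronese $\PP^2$ is the incidence variety $\{([\phi],\ell)\mid \ell\subseteq\ker\phi\}$, which projects to $\PP(W)=\PP^2$ as a $\PP^2$-bundle (the fibre over $\ell$ being the forms descending to $W/\ell$), whence $E(\hat{Q})=(1+q+q^2)^2=\frac{(1-q^3)^2}{(1-q)^2}$. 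Multiplying $E(\hat{\PP}^5)$ and $E(\hat{Q})$ by $E(\Gr^{\omega}(3,V))\,E(\Omega)$ reproduces $E(D_1)$ and $E(D_{13})$.

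Finally $E(\Omega_S)$ follows from the blow-up relation read off the first diagram in the proof of \cref{prop:exceptionaldivisors}: $D_1=\Bl_{\Delta_S}\Omega_S$, whose exceptional divisor $D_{13}$ is a $\PP^{2g-5}$-bundle over $\Delta_S$, so $\Delta_S$ has codimension $2g-4$ in $\Omega_S$ and $E(\Omega_S)=E(D_1)-E(\Delta_S)\big(E(\PP^{2g-5})-1\big)=E(D_1)-E(\Delta_S)\sum_{i=0}^{2g-6}q^{i+1}$. Substituting the formulas above and simplifying the bracket $(1-q^4)(1-q^{2g-4})-q(1-q^3)(1-q^{2g-5})=(1-q)(1-q^{2g-1})$ yields the asserted closed form. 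The main obstacle is not any single computation but the bookkeeping: one must pin down, from O'Grady's resolution, the precise bundle/blow-up structure of $\Omega_S$ and confirm that every bundle appearing is Zariski-locally trivial, so that E-polynomials genuinely multiply. Given \cref{prop:geometryblowup} and \cref{prop:exceptionaldivisors} this is in hand, and the remaining determinantal computations for $E(\hat{\PP}^5)$ and $E(\hat{Q})$ are routine.
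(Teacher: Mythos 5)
Your E-polynomial computations are all correct, and for the smooth fibres your route is essentially the paper's, just more self-contained: where the paper quotes \cite[Lemma 3.1]{ChoyKiem07} for the Hodge--Tate property and the E-polynomials of $\Gr^{\omega}(m,V)$, $\hat{\PP}^5$ and $\hat{Q}$, you rederive them (the incidence recursion for $\Gr^{\omega}(m,V)$, the blow-up formula for $\hat{\PP}^5$, and the identification of $\hat{Q}$ with the incidence variety $\{([\phi],\ell) \mid \ell \subseteq \ker\phi\}$ are all valid, and your simplification $(1-q^4)(1-q^{2g-4})-q(1-q^3)(1-q^{2g-5})=(1-q)(1-q^{2g-1})$ checks out). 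Your derivation of $E(\Omega_S)=E(D_1)-E(\Delta_S)\sum_{i=0}^{2g-6}q^{i+1}$ from additivity along the birational map $\pi_T|_{D_1}\colon D_1 \to \Omega_S$, whose exceptional divisor $D_{13}$ is a $\PP^{2g-5}$-bundle over $\Delta_S$, produces the same numerical identity that the paper extracts from the decomposition theorem for this map.

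There is, however, a genuine gap: the proposition also asserts that the fibres of $\Omega_S$ over $\Omega$ have \emph{pure} cohomology of Hodge--Tate type with no odd part, and your argument does not prove this. $\Omega_S$ is not one of your towers of Zariski-locally trivial projective bundles and blow-ups; it is the \emph{target} of the contraction $\pi_T|_{D_1}$, which collapses the divisor $D_{13}$ onto $\Delta_S$, of codimension $2g-4$ in $\Omega_S$, so the fibres of $\Omega_S$ over $\Omega$ are singular projective varieties. Neither the cell-decomposition argument nor additivity of E-polynomials applies to them: additivity computes $E(\Omega_S)$ but is blind to purity, since for a singular projective variety the weight filtration on $H^*$ may be non-trivial and the E-polynomial only records an alternating sum. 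This purity claim is not decorative; it is exactly what is invoked downstream, via \cref{prop:IH(OmegaR)}, to conclude in \cref{prop:IH(U)Omega} that $IH^*(N_{\Omega})$ is pure of Hodge--Tate type. The paper closes this step with one observation your proposal is missing: $\Omega_S$ has only finite quotient singularities, hence $H^*(\Omega_S)\simeq IH^*(\Omega_S)$, and the decomposition theorem for the resolution $\pi_T|_{D_1}\colon D_1 \to \Omega_S$ (here just the blow-up formula) realises $IH^*(\Omega_S)$ as a direct summand of $H^*(D_1)$, which you have already shown to be pure of Hodge--Tate type and even. Adding this observation (and, as a minor point, replacing your appeal to literal affine cell decompositions of blow-ups by the projective-bundle and blow-up formulas for cohomology, which manifestly preserve purity, Hodge--Tate type and evenness) makes your proof complete.
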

\begin{proof}
Note that $\Gr^{\omega}(m,V)$ is a smooth projective variety whose cohomology is of Hodge-Tate type; see for instance \cite[Lemma 3.1]{ChoyKiem07}. Hence, the fibres of $\Delta_S$, $D_1$, $D_3$ and $D_1 \cap D_3$ over $\Omega$ have pure cohomology of Hodge-Tate type by \cref{prop:geometryblowup} and \ref{prop:exceptionaldivisors}. Since $\Omega_S$ has only quotient singularities, the natural inclusion $H^*(\Omega_S) \simeq IH^*(\Omega_S) \hookrightarrow H^*(D_1)$ implies that the fibres of $\Omega_S$ over $\Omega$ have pure cohomology of Hodge-Tate type. The computation of the E-polynomials follows immediately from \cite[Lemma 3.1]{ChoyKiem07}, \cref{prop:geometryblowup} and \ref{prop:exceptionaldivisors}, except maybe for $\Omega_S$. In that case, we use the decomposition theorem for the blow-up map $\pi_T|_{D_1}$, which actually reduces to \cite[Theorem 7.31]{Voisin07I}.
\end{proof}

\subsection{The incidence variety $I_{2g-3}$}\label{sec:incidencevariety}

The incidence variety $I_{2g-3} \subset \PP^{2g-3} \times \PP^{2g-3}$ is the projectivization of the vector bundle $\Omega^1_{\PP^{2g-3}}(1)$ over $\PP^{2g-3}$. Hence, we can write
\begin{equation}\label{cohomI2g-3}
    H^*(I_{2g-3}) = \QQ[a,b]/(a^{2g-2}, b^{2g-2}, b^{2g-3} + a b^{2g-4} + \ldots + a^{2g-4} b + a^{2g-3}),
\end{equation}
where $a$ and $b$ have degree $2$, and they are pullback of the first Chern classes of the tautological line bundle of $\PP^{2g-3}$ via the two projections $I_{2g-3} \subset \PP^{2g-3} \times \PP^{2g-3} \to \PP^{2g-3}$. Note that $I_{2g-3}$ has no odd cohomology.

The involution which exchanges the factors of the product $\PP^{2g-3} \times \PP^{2g-3}$ leaves $I_{2g-3}$ invariant, and in cohomology exchanges the classes $a$ and $b$. Consider the decomposition into eigenspaces for the involution (relative to eigenvalues $\pm 1$ respectively)
\[H^*(I_{2g-3}) = H^*(I_{2g-3})^+ \oplus H^*(I_{2g-3})^-.\]
For $d=2k<4g-7$, we have
\begin{align*}
 H^d(I_{2g-3})^+ = H^d(\PP^{2g-3}\times \PP^{2g-3})^+ = \langle a^ib^j | i+j=d\rangle^+ = \langle a^ib^j + a^j b^i| i+j=d\rangle.
 \end{align*}
Therefore, we obtain that
\[
\dim H^{2k}(I_{2g-3})=k+1, \,\, \dim H^{2k}(I_{2g-3})^+=\left\lceil \frac{k+1}{2} \right\rceil, \,\, \dim H^{2k}(I_{2g-3})^-=\left\lceil \frac{k}{2} \right\rceil.
\]
\begin{prop} Setting $q \coloneqq t^2=uv$, the Poincar\'{e} polynomials (equivalently E-polynomials) of $I_{2g-3}$, of the invariant and variant part of its cohomology are
\begin{align}
    \label{item:PPI2g-3}P_t(I_{2g-3}) & = E(I_{2g-3})  = \frac{(1-q^{2g-2})(1-q^{2g-3})}{(1-q)^2}\\
   \label{item:PPI2g-3+} P_t(I_{2g-3})^+ & = E(I_{2g-3})^+  
   = \frac{(1-q^{2g-2})^2}{(1-q^2)(1-q)}\\
    \label{item:PPI2g-3-} P_t(I_{2g-3})^- & = E(I_{2g-3})^-  
    = q \frac{(1-q^{2g-2})(1-q^{2g-4})}{(1-q^2)(1-q)}
\end{align}
\end{prop}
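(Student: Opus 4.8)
The plan is to read off every Betti number of $I_{2g-3}$, together with the action of the factor-swap involution $\sigma$, from two inputs only: the ring presentation \eqref{cohomI2g-3} (equivalently the projective bundle structure $I_{2g-3}=\PP(\Omega^1_{\PP^{2g-3}}(1))$) and Poincar\'e duality. Since the generators $a,b$ have Hodge type $(1,1)$ and there is no odd cohomology, the mixed Hodge structure is pure of Hodge--Tate type, so $E(I_{2g-3})=P_t(I_{2g-3})$ under $q=t^2=uv$, and the same identification holds on the $\pm$ eigenspaces of $\sigma$. Thus it suffices to compute the three Poincar\'e polynomials. For \eqref{item:PPI2g-3} I would simply use that $I_{2g-3}$ is a $\PP^{2g-4}$-bundle over $\PP^{2g-3}$, so $P_t(I_{2g-3})=P_t(\PP^{2g-3})\,P_t(\PP^{2g-4})=\frac{(1-q^{2g-2})(1-q^{2g-3})}{(1-q)^2}$.

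For the eigenspaces the key structural input is that $\sigma$ is a holomorphic involution, hence orientation preserving, so it acts as $+1$ on $H^{8g-14}(I_{2g-3})\cong\QQ$, where $\dim_\CC I_{2g-3}=4g-7$. Poincar\'e duality then supplies a $\sigma$-equivariant perfect pairing $H^{2k}\times H^{8g-14-2k}\to\QQ$, whence $\dim H^{2k}(I_{2g-3})^{\pm}=\dim H^{8g-14-2k}(I_{2g-3})^{\pm}$. In other words both $P_t(I_{2g-3})^{+}$ and $P_t(I_{2g-3})^{-}$ are palindromic of degree $4g-7$. Because $4g-7$ is odd while the cohomology is concentrated in even degrees, palindromicity pairs each degree $2k$ with $k\le 2g-4$ to a distinct degree with $k\ge 2g-3$, leaving no middle term unaccounted for. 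Consequently the count already recorded for $2k<4g-7$ — namely $\dim H^{2k}(I_{2g-3})^{+}=\lceil (k+1)/2\rceil$ and $\dim H^{2k}(I_{2g-3})^{-}=\lceil k/2\rceil$, obtained from the restriction isomorphism $H^{2k}(I_{2g-3})\cong H^{2k}(\PP^{2g-3}\times\PP^{2g-3})$ in that range — already determines every Betti number of each eigenspace.

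It then remains to assemble the generating functions and match the closed forms \eqref{item:PPI2g-3+} and \eqref{item:PPI2g-3-}. Using $\frac{1}{(1-q)(1-q^2)}=\sum_{k\ge 0}\lceil (k+1)/2\rceil\,q^k$, I would check that in the lower range $k\le 2g-4$ — where the binomial numerators contribute only their constant term — the expression $\frac{(1-q^{2g-2})^2}{(1-q^2)(1-q)}$ has $q^k$-coefficient $\lceil(k+1)/2\rceil$ and $q\,\frac{(1-q^{2g-2})(1-q^{2g-4})}{(1-q^2)(1-q)}$ has $q^k$-coefficient $\lceil k/2\rceil$. Since both candidate expressions are palindromic of degree $4g-7$ (a direct $q\mapsto q^{-1}$ computation), agreement in the lower half forces agreement in every degree, giving \eqref{item:PPI2g-3+} and \eqref{item:PPI2g-3-}. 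As a final consistency check I would verify $\frac{(1-q^{2g-2})^2+q(1-q^{2g-2})(1-q^{2g-4})}{(1-q)(1-q^2)}=\frac{(1-q^{2g-2})(1-q^{2g-3})}{(1-q)^2}$, which confirms $P_t(I_{2g-3})^{+}+P_t(I_{2g-3})^{-}=P_t(I_{2g-3})$.

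The main obstacle is the palindromicity step: establishing that the $\sigma$-invariant and $\sigma$-variant Betti numbers are each symmetric under $k\mapsto 4g-7-k$. This is precisely what lets us sidestep the delicate high-degree part of the ring \eqref{cohomI2g-3}, where the incidence relation interacts with the two power relations $a^{2g-2}$ and $b^{2g-2}$, and reduce the whole computation to the elementary lower-half count furnished by the Lefschetz-type restriction isomorphism.
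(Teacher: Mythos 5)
Your proof is correct, and it follows the same overall skeleton as the paper's: the total Poincar\'{e} polynomial from the $\PP^{2g-4}$-bundle structure, the lower-half eigenspace dimensions $\lceil (k+1)/2\rceil$ and $\lceil k/2\rceil$ from the Lefschetz-type restriction isomorphism with $\PP^{2g-3}\times\PP^{2g-3}$ in degrees $2k<4g-7$, and a symmetry statement to propagate these counts to the upper half. The one genuine difference is the key lemma used for that symmetry. The paper invokes the hard Lefschetz theorem with the $\iota$-invariant polarization $\mathcal{O}(1,1)$: cup product with powers of its first Chern class commutes with the involution, so it restricts to isomorphisms on each eigenspace, giving $\dim H^{d}(I_{2g-3})^{\pm}=\dim H^{8g-14-d}(I_{2g-3})^{\pm}$. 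You obtain exactly the same palindromicity from $\sigma$-equivariant Poincar\'{e} duality, using that a holomorphic involution preserves the complex orientation, hence fixes the fundamental class, so the $+$ and $-$ eigenspaces are mutually orthogonal under the duality pairing and each pairs perfectly with itself in complementary degree. Your route is marginally more elementary (no hard Lefschetz input), while the paper's choice matches the mechanism it reuses in \cref{prop:NSIGMA}, where the $\iota$-invariance of $\mathcal{O}(1,1)$ is again what controls the eigenspace behaviour of the primitive cohomology. The final assembly also differs in presentation but not in substance: the paper computes the difference $P_t(I_{2g-3})^{+}-P_t(I_{2g-3})^{-}$ in closed form (its \eqref{eq:P+-P-}) and solves the resulting linear system against \eqref{item:PPI2g-3}, whereas you verify that the candidate expressions \eqref{item:PPI2g-3+} and \eqref{item:PPI2g-3-} are themselves palindromic with respect to $k\mapsto 4g-7-k$ and have the correct coefficients for $k\le 2g-4$, which pins them down by the same symmetry; your concluding identity $P_t(I_{2g-3})^{+}+P_t(I_{2g-3})^{-}=P_t(I_{2g-3})$ (which factors via $1+q-q^{2g-3}-q^{2g-2}=(1+q)(1-q^{2g-3})$) is the consistency check that closes the loop. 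Both arguments are complete.
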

\begin{proof}
$I_{2g-3}$ is a $\PP^{2g-4}$-bundle over $\PP^{2g-3}$, and this gives \eqref{item:PPI2g-3}. We now estimate $P_t(I_{2g-3})^+ - P_t(I_{2g-3})^-$. For $d=2k<4g-7$, we have
\[\dim H^{2k}(I_{2g-3})^+ - \dim H^{2k}(I_{2g-3})^- = \left\lceil \frac{k+1}{2} \right\rceil- \left\lceil \frac{k}{2} \right\rceil = \begin{cases}
1 & \text{ for } k=2l,\\ \label{equation:computation primitivepart}
0  & \text{ for } k=2l+1.
\end{cases}\]
Since the polarization $\mathcal{O}(1,1)$ is $\iota$-invariant, the hard Lefschetz theorem gives $\dim H^{d}(I_{2g-3})^{\pm} = \dim H^{8g-14 - d}(I_{2g-3})^{\pm}$. Thus, we can write
\begin{equation}\label{eq:P+-P-}
 P_t(I_{2g-3})^+ - P_t(I_{2g-3})^- = \sum^{g-2}_{l=0}q^{2l} + q^{2g-3} \sum^{g-2}_{l=0}q^{2l} = \frac{(1-q^{2g-2})(1+q^{2g-3})}{(1-q^2)}.   
\end{equation}
Finally, substituting \eqref{item:PPI2g-3} and \eqref{eq:P+-P-} in
\begin{align*}
    P_t(I_{2g-3})^+ & = \frac{1}{2}(P_t(I_{2g-3}) + ( P_t(I_{2g-3})^+ - P_t(I_{2g-3})^- ))\\
    P_t(I_{2g-3})^- & =  P_t(I_{2g-3}) - P_t(I_{2g-3})^+,
\end{align*}
we obtain \eqref{item:PPI2g-3+} and \eqref{item:PPI2g-3-}.
\end{proof}

\subsection{Intersection cohomology of local models}\label{sec:intersectioncohomologyoflocalmodels}
The goal of this section is to compute the intersection cohomology of the normal slices $N_{\Sigma}$ and $N_{\Omega}$. This is an important step to determine the summands of the decomposition theorem in \cref{introthm:decompositiontheorempiT}.
\begin{prop}\label{prop:NSIGMA}
Let $N_{\Sigma}$ be a slice normal to $\Sigma$ at a point in $\Sigma \setminus \Omega$. Then
\begin{equation}\label{intersection cohomology NSigma}
IH^d(N_{\Sigma})  \simeq 
\begin{cases}
H^{d}_{\mathrm{prim}}(I_{2g-3}) \simeq \QQ & \text{ for } d=2k<4g-6,\\
0 & \text{ otherwise.}\\
\end{cases}
\end{equation}
\begin{equation}\label{intersection cohomology NSigma: var and invar}
IH^{2k}(N_{\Sigma})  \subseteq 
\begin{cases}
H^{2k}(I_{2g-3})^+ & \text{ for } k=2l,\\
H^{2k}(I_{2g-3})^- & \text{ for } k=2l+1.
\end{cases}
\end{equation}
\end{prop}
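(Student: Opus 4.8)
The plan is to reduce the statement to an explicit Lefschetz-decomposition computation on the smooth variety $I_{2g-3}$, whose cohomology ring and swap-involution action were recorded in \S\ref{sec:incidencevariety}. First I would invoke the local description: by \cref{prop:normalslices}.\eqref{item:localmodelSigma}, the slice $N_\Sigma$ is locally analytically isomorphic to the affine cone $C(I_{2g-3},\mathcal{O}(1,1))$. Since $\dim I_{2g-3}=4g-7$, the cone has dimension $n=4g-6$. As $I_{2g-3}$ is smooth we have $IH^\ast=H^\ast$, so \cref{prop: interseccone} yields $IH^d(N_\Sigma)\simeq H^d_{\mathrm{prim}}(I_{2g-3})$ for $d<4g-6$ and $0$ for $d\ge 4g-6$, where $H^d_{\mathrm{prim}}=\ker\!\big(c_1(\mathcal{O}(1,1))^{\,n-d}\big)$. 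A small index check shows this is the usual primitive cohomology: writing $L\coloneqq c_1(\mathcal{O}(1,1))=a+b$ and recalling that hard Lefschetz on $I_{2g-3}$ uses the power $(4g-7)-d$, one sees $n-d=(4g-7)-d+1$, so $\ker(L^{n-d})=P^d$ is exactly the Lefschetz-primitive subspace for the ample class $L$.

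Next I would carry out the dimension count. Because $I_{2g-3}$ has no odd cohomology, $H^d_{\mathrm{prim}}=0$ for odd $d$. For $d=2k<4g-6$, i.e. $k\le 2g-4<\dim I_{2g-3}$, the $\PP^{2g-4}$-bundle structure over $\PP^{2g-3}$ gives $b_{2k}=k+1$, and hard Lefschetz (valid since $2k\le\dim I_{2g-3}$) yields $\dim P^{2k}=b_{2k}-b_{2k-2}=(k+1)-k=1$. This establishes the first displayed formula, $IH^{2k}(N_\Sigma)\simeq H^{2k}_{\mathrm{prim}}(I_{2g-3})\simeq\QQ$.

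For the parity statement I would exploit that the swap involution $a\leftrightarrow b$ fixes $L=a+b$, hence commutes with cup product by $L$; therefore the Lefschetz decomposition is compatible with the $\pm$-eigenspace splitting, and $L$ injects each eigenspace of $H^{2k-2}$ into the corresponding eigenspace of $H^{2k}$. Feeding the dimensions $\dim H^{2k,+}=\lceil (k+1)/2\rceil$ and $\dim H^{2k,-}=\lceil k/2\rceil$ from \S\ref{sec:incidencevariety} into $\dim P^{2k,+}=\dim H^{2k,+}-\dim H^{2k-2,+}=\lceil (k+1)/2\rceil-\lceil k/2\rceil$, this equals $1$ when $k$ is even and $0$ when $k$ is odd; since $\dim P^{2k}=1$, the unique primitive line is $+$-invariant for $k=2l$ and $-$-invariant for $k=2l+1$. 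Under the identification of \cref{prop: interseccone}, $IH^{2k}(N_\Sigma)$ is precisely this primitive line inside $H^{2k}(I_{2g-3})$, which gives the second formula.

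The main point requiring care is the compatibility of the involution: one must confirm that the involution defining the $\pm$-splitting of $H^\ast(I_{2g-3})$, namely the exchange of the two $\PP^{2g-3}$ factors, is the one induced by the deck transformation $\iota$ of \cref{def:involution} under the local isomorphism $N_\Sigma\simeq C(I_{2g-3},\mathcal{O}(1,1))$. Geometrically, near a point of $\Sigma\setminus\Omega$ the two sheets of $\Sigma_\iota\to\Sigma$ correspond to the two stable summands, whose exchange matches the swap of the two rulings of the incidence variety, and the polarization $\mathcal{O}(1,1)$ is itself swap-invariant and hence descends. Apart from this identification and the index bookkeeping above, every step is a routine consequence of hard Lefschetz and the cohomology ring \eqref{cohomI2g-3}.
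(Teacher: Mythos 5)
Your proof is correct and takes essentially the same route as the paper's: both reduce via \cref{prop:normalslices} and \cref{prop: interseccone} to the primitive cohomology of $I_{2g-3}$, get $\dim H^{2k}_{\mathrm{prim}}(I_{2g-3})=1$ from the $\PP^{2g-4}$-bundle structure, and settle the parity statement by the identical count $\lceil (k+1)/2\rceil-\lceil k/2\rceil$, using that $\mathcal{O}(1,1)$ is fixed by the swap involution so that the Lefschetz/cokernel description is equivariant. Your additional care in identifying the swap involution with the deck transformation of \cref{def:involution} is a point the paper leaves implicit (and only really needed when the proposition is later combined with \cref{prop:exceptionaldivisors}), but it does not alter the argument.
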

\begin{proof}
 \eqref{intersection cohomology NSigma} follows from \cref{prop: interseccone}, since $N_{\Sigma}$ is locally isomorphic to an affine cone over the smooth variety $I_{2g-3}$ (with $\iota$-invariant conormal bundle $\mathcal{O}(1,1)$) by \cref{prop:normalslices}.\eqref{item:localmodelSigma}. 

Since $\mathcal{O}(1,1)$ is $\iota$-invariant, $H^d_{\mathrm{prim}}(I_{2g-3})$ is $\iota$-invariant too. Hence, \eqref{intersection cohomology NSigma: var and invar} follows from the following dimensional argument
\begin{align*}
1 \geq \dim IH^{2k}(N_\Sigma)  \geq \dim IH^{2k}(N_\Sigma)^+ & =  \dim H^{2k}(I_{2g-3})^+ - \dim H^{2k-2}(I_{2g-3})^+ \nonumber \\ 
& = \left\lceil \frac{k+1}{2} \right\rceil- \left\lceil \frac{k}{2} \right\rceil = \begin{cases}
1 & \text{ for } k=2l,\\ \label{equation:computation primitivepart}
0  & \text{ for } k=2l+1.
\end{cases}
\end{align*}
\end{proof}

\begin{prop}\label{prop:NSIGMA'}
Let $N_{\Sigma_R \cap \Omega_R}$ be a slice normal to $\Sigma_R \cap \Omega_R$ in $\Omega_R$. Then
\begin{equation}\label{intersection cohomology NSigma'}
IH^d(N_{\Sigma_R \cap \Omega_R})  \simeq 
\begin{cases}
H^d_{\mathrm{prim}}(I'_{2g-3}) \simeq \QQ & \text{ for } d=4k<4g-6,\\
0 & \text{ otherwise}.
\end{cases}
\end{equation}
In particular,
\begin{equation}\label{dimintersectioncohomologyNSigma'}
\dim H^{4g-6+2i}(I'_{2g-3}) - \dim IH^{4g-6+2i}(N_{\Sigma_R \cap \Omega_R})  = \left\lceil \frac{2g-3-|i|}{2}\right\rceil.
\end{equation}
\end{prop}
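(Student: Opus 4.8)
The plan is to realize $N_{\Sigma_R \cap \Omega_R}$ as an affine cone and reduce everything to the cohomology of $I_{2g-3}$ computed in \S\ref{sec:incidencevariety}. By \cref{prop:geometryblowup}.\eqref{item:transversesliceSigmaROmegaR} the slice is analytically an affine cone over $I'_{2g-3} = I_{2g-3}/\iota$, with conormal bundle the descent of the $\iota$-invariant polarization $\mathcal{O}(1,1)$. Since $I'_{2g-3}$ is the quotient of the smooth projective variety $I_{2g-3}$ by the finite group $\langle \iota\rangle$, it has only quotient singularities, hence is a $\QQ$-homology manifold and $IH^*(I'_{2g-3}) = H^*(I'_{2g-3}) = H^*(I_{2g-3})^+$; moreover the Lefschetz operator $c_1(L')\cup$ is identified with $\cup(a+b)$ on $H^*(I_{2g-3})^+$. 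As $\dim I'_{2g-3} = 4g-7$, the cone has $n = 4g-6$, and \cref{prop: interseccone} yields
\[
IH^d(N_{\Sigma_R \cap \Omega_R}) \simeq
\begin{cases}
H^d_{\mathrm{prim}}(I'_{2g-3}) & d < 4g-6,\\
0 & d \geq 4g-6.
\end{cases}
\]

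I would then compute the primitive part. Because $a+b$ is $\iota$-invariant, the hard Lefschetz operator commutes with $\iota$ and so preserves the eigenspace decomposition; hard Lefschetz therefore holds on $H^*(I_{2g-3})^+$ with operator $a+b$, and for $2k \leq \dim I'_{2g-3}$ we get $\dim H^{2k}_{\mathrm{prim}}(I'_{2g-3}) = \dim H^{2k}(I_{2g-3})^+ - \dim H^{2k-2}(I_{2g-3})^+$. Plugging in $\dim H^{2k}(I_{2g-3})^+ = \lceil (k+1)/2\rceil$ from \S\ref{sec:incidencevariety} gives $\lceil (k+1)/2\rceil - \lceil k/2\rceil$, which equals $1$ for $k$ even and $0$ for $k$ odd. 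This establishes the first display: $IH^d(N_{\Sigma_R \cap \Omega_R}) \simeq \QQ$ exactly when $d = 4k < 4g-6$, and vanishes otherwise. Equivalently, one may note that $N_{\Sigma_R \cap \Omega_R} = N_\Sigma/\iota$ (the involution respects the $\iota$-invariant cone structure), so $IH^*(N_{\Sigma_R \cap \Omega_R}) = IH^*(N_\Sigma)^+$, and read the answer directly off \cref{prop:NSIGMA} together with \eqref{intersection cohomology NSigma: var and invar}.

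For the ``In particular'' identity, set $d = 4g-6+2i = 2(2g-3+i)$. When $i \geq 0$ we have $d \geq 4g-6$, so $IH^d(N_{\Sigma_R \cap \Omega_R}) = 0$ and the left-hand side is $\dim H^d(I'_{2g-3}) = \dim H^d(I_{2g-3})^+$. Since the holomorphic involution $\iota$ preserves the complex orientation, Poincar\'{e} duality on $I_{2g-3}$ is $\iota$-equivariant and identifies this with $\dim H^{2(2g-4-i)}(I_{2g-3})^+ = \lceil (2g-3-i)/2\rceil$. When $i < 0$ we have $d < 4g-6$, so the left-hand side is $\dim H^d(I'_{2g-3}) - \dim H^d_{\mathrm{prim}}(I'_{2g-3}) = \dim H^{d-2}(I_{2g-3})^+ = \lceil (2g-3+i)/2\rceil$ by the Lefschetz computation above. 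In both cases the value is $\lceil (2g-3-|i|)/2\rceil$, as claimed.

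The computation is elementary once these reductions are in place; the two points requiring care are (i) checking that the polarization of the cone is the $\iota$-invariant class $a+b$, so that the primitive cohomology appearing in \cref{prop: interseccone} is precisely the one governed by hard Lefschetz on the invariant part $H^*(I_{2g-3})^+$, and (ii) the bookkeeping in the ``In particular'' identity, where the ordinary cohomology of $I'_{2g-3}$ in degrees $\geq 4g-6$ sits above the truncation range of the cone and must be transported, via Poincar\'{e} duality on $I_{2g-3}$, back to the low-degree invariant Betti numbers whose values are already known.
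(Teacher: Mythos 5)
Your argument is correct and is essentially the paper's own proof: the paper proves the proposition in one line using exactly the same two ingredients, namely the cone description of the slice from \cref{prop:geometryblowup}.\eqref{item:transversesliceSigmaROmegaR} combined with \cref{prop: interseccone}, together with the identification $H^{2k}(I'_{2g-3})=H^{2k}(I_{2g-3})^+$. Your hard Lefschetz computation on the $\iota$-invariant part and the duality bookkeeping for $i\geq 0$ simply make explicit the details the paper leaves to the reader.
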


\begin{proof}
\cref{prop: interseccone} and \ref{prop:geometryblowup}.\eqref{item:transversesliceSigmaROmegaR} gives \eqref{intersection cohomology NSigma'}, while
\eqref{dimintersectioncohomologyNSigma'} follows immediately from $ H^{2k}(I'_{2g-3}) = H^{2k}(I_{2g-3})^+ $. 
\end{proof}

\begin{prop}\label{prop:IH(OmegaR)}
The intersection E-polynomial of $\Omega_R$ is 
\[IE(\Omega_R) = \frac{(1-q^{4g-4})(1-q^{2g})}{(1-q)(1-q^2)}\cdot E(\Omega).\]
\end{prop}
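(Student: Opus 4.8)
The plan is to realise $\Omega_R$ as the target of a resolution and run the decomposition theorem, so that everything reduces to the normal-slice computation of \cref{prop:NSIGMA'} and the E-polynomials recorded in \cref{prop;HodgeTateI}. By \cref{prop:geometryblowup}.\eqref{item:OmegaR} we have a product $\Omega_R \cong (\PP\Hom^{\omega}(W,V)\sslash SO(W))\times\Omega$ with $\Omega$ smooth, and all the strata and maps below are products with (or pulled back from) the first factor; this lets me treat every local system as trivial and every cohomology via K\"unneth. The morphism I use is the restriction $\rho \coloneqq \pi_S|_{\Omega_S}\colon \Omega_S \to \Omega_R$, which is proper and birational. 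By \cref{prop:geometryblowup}.\eqref{item:transversesliceSigmaROmegaR} it is an isomorphism away from $\Sigma_R\cap\Omega_R \cong \PP^{2g-1}\times\Omega$, the fibre over a point of $\Sigma_R\cap\Omega_R$ is the projectivised normal cone $I'_{2g-3}$, and $\Omega_S$ has only quotient singularities, hence is a $\QQ$-homology manifold with $IC_{\Omega_S}=\QQ_{\Omega_S}[\dim\Omega_S]$ and pure Hodge--Tate cohomology of the known E-polynomial $E(\Omega_S)$ of \cref{prop;HodgeTateI}.

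First I would apply the decomposition theorem to $\rho$. Since $\Omega_S$ is a $\QQ$-homology manifold and $\rho$ is an isomorphism outside $\Sigma_R\cap\Omega_R$, it gives
\[R\rho_*\QQ_{\Omega_S}[\dim\Omega_S] = IC_{\Omega_R}\oplus\mathcal{E},\]
with $\mathcal{E}$ a direct sum of shifted simple summands supported on $\Sigma_R\cap\Omega_R$. As $\Sigma_R\cap\Omega_R\cong\PP^{2g-1}\times\Omega$ is smooth and, by \cref{prop;HodgeTateI}, all cohomology in sight is pure of Hodge--Tate type with no odd part, each summand of $\mathcal{E}$ is a Tate-twisted shift of $\QQ_{\Sigma_R\cap\Omega_R}[\dim(\Sigma_R\cap\Omega_R)]$. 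Consequently the whole statement is equivalent to a single identity of E-polynomials
\[E(\Omega_S) = IE(\Omega_R) + E(\Sigma_R\cap\Omega_R)\cdot m(q),\]
where $m(q)\in\ZZ[q]$ encodes the multiplicities and Tate twists of $\mathcal{E}$ and $E(\Sigma_R\cap\Omega_R)=\tfrac{1-q^{2g}}{1-q}E(\Omega)$.

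Next I would determine $m(q)$ by comparing stalks at a point $y\in\Sigma_R\cap\Omega_R$. The stalk of $R\rho_*\QQ_{\Omega_S}[\dim\Omega_S]$ is $H^*(\rho^{-1}(y))=H^*(I'_{2g-3})$, while the stalk of $IC_{\Omega_R}$ is the intersection cohomology $IH^*(N_{\Sigma_R\cap\Omega_R})$ of the cone over $I'_{2g-3}$ computed in \cref{prop:NSIGMA'}. Their difference is exactly \eqref{dimintersectioncohomologyNSigma'}, i.e.\ $\dim H^{4g-6+2i}(I'_{2g-3})-\dim IH^{4g-6+2i}(N_{\Sigma_R\cap\Omega_R})=\lceil\tfrac{2g-3-|i|}{2}\rceil$; these numbers are the multiplicities of $\mathcal{E}$, placed in the Tate degrees $2g-3+i$ dictated by the hard Lefschetz symmetry of the $\iota$-invariant polarisation $\mathcal{O}(1,1)$ on $I'_{2g-3}$ (as in \cref{prop:NSIGMA}). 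Summing,
\[m(q)=\sum_{i=-(2g-4)}^{2g-4}\left\lceil\tfrac{2g-3-|i|}{2}\right\rceil q^{2g-3+i}=q\,\frac{(1-q^{2g-2})(1-q^{2g-3})}{(1-q)(1-q^2)}.\]
Substituting this and $E(\Omega_S)=\tfrac{(1-q^{2g-2})(1-q^{2g-1})(1-q^{2g})}{(1-q)^2(1-q^2)}E(\Omega)$ into the E-polynomial identity and simplifying (using $(1-q^{2g-1})-q(1-q^{2g-3})=(1-q)(1+q^{2g-2})$ and $(1-q^{2g-2})(1+q^{2g-2})=1-q^{4g-4}$) yields the claimed $IE(\Omega_R)=\tfrac{(1-q^{4g-4})(1-q^{2g})}{(1-q)(1-q^2)}E(\Omega)$. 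As a sanity check, $m(q)$ is precisely the term subtracted in the formula for $b(j)$ in \cref{introthm:decompositiontheorempiT}.

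I expect the main obstacle to be this last identification of $\mathcal{E}$: extracting its multiplicities and, above all, the correct Tate twists from the single stalk identity \eqref{dimintersectioncohomologyNSigma'}. The point is that $IH^*(N_{\Sigma_R\cap\Omega_R})$ only accounts for the ``lower half'' of $H^*(I'_{2g-3})$, and the complementary ``upper half'' must be redistributed as shifted constant sheaves on $\Sigma_R\cap\Omega_R$ in the hard-Lefschetz-symmetric pattern forced by the $\iota$-invariant polarisation; fixing the degrees $2g-3+i$ correctly is what makes the closed form for $m(q)$ and the final cancellation come out. Once the pattern is pinned down, the remaining manipulation of rational functions in $q$ is routine.
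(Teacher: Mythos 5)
Your proposal is correct and follows essentially the same route as the paper: both apply the decomposition theorem to $\pi_S|_{\Omega_S}\colon \Omega_S \to \Omega_R$, identify the summands supported on $\Sigma_R\cap\Omega_R \simeq \PP^{2g-1}\times\Omega$ as Tate-twisted shifted constant sheaves (the paper via triviality of the fibration over $\Omega$ and simple connectedness of $\PP^{2g-1}$, you via the product structure and Hodge--Tate purity), read off the multiplicities from the stalk identity \eqref{dimintersectioncohomologyNSigma'} of \cref{prop:NSIGMA'}, and conclude by the same E-polynomial manipulation using $E(\Omega_S)$ from \cref{prop;HodgeTateI}. The only cosmetic difference is that the paper records the defect of semismallness explicitly, while you organize the correction term into the single polynomial $m(q)$; the computations are identical.
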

\begin{proof}
We apply the decomposition theorem to the restriction of $\pi_S$ to the strict transform $\Omega_S \coloneqq \pi^{-1}_{S, *} \Omega_R$. 

By \cref{prop:geometryblowup}.\eqref{item:transversesliceSigmaROmegaR} the defect of semismallness of $\pi_S|_{\Omega_S}$ is 
\begin{align*}
 r(\pi_S|_{\Omega_S}) & \coloneqq \dim \Omega_S \times_{\Omega_R} \Omega_S - \dim \Omega_R \\
 & = 2 \dim I'_{2g-3} + \dim \Sigma_ R \cap \Omega_R - \dim \Omega_R = 4g-8,
\end{align*}
and $\Sigma_ R \cap \Omega_R$ is the only support of the decomposition theorem for $\pi_S|_{\Omega_S}$. Note that $R^i \pi_{S, *} \QQ_{\pi^{-1}_S(\Sigma_ R \cap \Omega_R)}$ are trivial local systems over $\Sigma_ R \cap \Omega_R \simeq \PP^{2g-1} \times \Omega$, because of \cref{prop:geometryblowup}.\eqref{item:exclocusoverOmega} and the simple connectedness of $\PP^{2g-1}$. Hence, there exist integers $a(i)$ such that
\begin{align*}
  R(\pi_{S}|_{\Omega_S})_* \QQ [\dim \Omega_S] & = \bigoplus^{4g-8}_{i=-4g+8} {^{\mathfrak{p}}}\mathcal{H}^i((\pi_{S}|_{\Omega_S})_* \QQ[\dim \Omega_S])[-i]\\  
  & = IC_{\Omega_R} \oplus \bigoplus^{2g-4}_{i=-2g+4} \QQ^{a(i)}_{\Sigma_ R \cap \Omega_R}[\dim\Sigma_ R \cap \Omega_R-2i](-2g+3-i).
\end{align*}
At the stalk level, at $x \in \Sigma_R \cap \Omega_R$, we obtain by \eqref{dimintersectioncohomologyNSigma'}
\[
a(i) = \dim H^{4g-6+2i}(I'_{2g-3})-\dim IH^{4g-6+2i}(N_{\Sigma_R \cap \Omega_R}) = \left\lceil \frac{2g-3-|i|}{2}\right\rceil.
\]
Together with \cref{prop;HodgeTateI}, we get
\begin{align*}
 & IE(\Omega_R) = E(\Omega_S) - E(\Omega) \cdot E(\PP^{2g-1}) \cdot \sum^{2g-4}_{i=-2g+4}q^{2g-3+i} \left\lceil \frac{2g-3-|i|}{2}\right\rceil\\
& = \left( \frac{(1-q^{2g-2})(1-q^{2g-1})(1-q^{2g})}{(1-q)^2(1-q^2)} - \frac{q (1 - q^{2 g - 3}) (1 - q^{2 g - 2})(1-q^{2g})}{(1-q)^2 (1-q^2)} \right) \cdot E(\Omega)\\
& = \frac{(1-q^{4g-4})(1-q^{2g})}{(1-q)(1-q^2)} \cdot E(\Omega).
\end{align*}

\end{proof}
\begin{prop}\label{prop:IH(U)Omega} Let $N_{\Omega}$ be a slice normal to $\Omega$. Then $IH^*(N_{\Omega})$ is pure of Hodge-Tate type with intersection Poincar\'{e} polynomial (equivalently intersection E-polynomials)
\[IP_t(N_{\Omega}) = IE(N_{\Omega}) = \frac{1-q^{2g}}{1-q^2}.\] 
\end{prop}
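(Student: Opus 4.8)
The plan is to exploit the local description of $N_{\Omega}$ as an affine cone and reduce to the primitive intersection cohomology of its projective base, which has effectively already been computed. By \cref{prop:normalslices}.\eqref{item:localmodelOmega}, $N_{\Omega}$ is an affine cone $C(P,L)$ over the projective GIT quotient $P \coloneqq \PP\Hom^{\omega}(W,V)\sslash SO(W)$, with $L$ its (ample) conormal bundle. Since $\dim \Gr^{\omega}(3,V)=6g-12$ (read off from \cref{prop;HodgeTateI}) and $P$ fibres over it with five-dimensional fibres, $\dim P = 6g-7$ and $\dim N_{\Omega}=6g-6=:n$, consistent with $\dim M = 6g-6$ and $\dim \Omega = 0$. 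Applying \cref{prop: interseccone} to $C(P,L)$ then gives
\[ IH^d(N_{\Omega}) \simeq IH^d_{\mathrm{prim}}(P) \ \text{ for } d<n, \qquad IH^d(N_{\Omega})=0 \ \text{ for } d\geq n, \]
so the whole problem reduces to understanding $IH^*(P)$ together with cup product by $c_1(L)$.

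Next I would read off $IH^*(P)$ from \cref{prop:IH(OmegaR)}. The base $P$ depends only on $(W,\omega)$ and $(V,\omega)$, hence only on $g$, so I may compute in the case $G=\Sl_2$, where $\Omega$ is a finite set of reduced points. By \cref{prop:geometryblowup}.\eqref{item:OmegaR} we have $\Omega_R \simeq P \times \Omega$, so Künneth gives $IE(\Omega_R)=IE(P)\cdot E(\Omega)$ (with $IE(\Omega)=E(\Omega)$ since $\Omega$ is smooth); comparing with \cref{prop:IH(OmegaR)} yields
\[ IE(P)=\frac{(1-q^{4g-4})(1-q^{2g})}{(1-q)(1-q^2)}. \]
In particular $IH^*(P)$ is concentrated in even degrees. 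For purity and Hodge–Tate type I would use that, since $\Omega$ is a union of points, $\Omega_R$ is a disjoint union of copies of $P$, and the decomposition in the proof of \cref{prop:IH(OmegaR)} exhibits $IC_{\Omega_R}$ as a direct summand of $R(\pi_S|_{\Omega_S})_*\QQ[\dim\Omega_S]$. Hence $IH^*(\Omega_R)$, and therefore $IH^*(P)$, is a summand of $H^*(\Omega_S)$, which is pure of Hodge–Tate type by \cref{prop;HodgeTateI}. As $IH^*(N_{\Omega})$ is a sub-Hodge-structure of $IH^*(P)$, it too is pure of Hodge–Tate type, whence $IP_t(N_{\Omega})$ and $IE(N_{\Omega})$ coincide after the substitution $q=uv=t^2$.

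It remains to extract the primitive Betti numbers. Because $P$ is projective of dimension $N=n-1$ and $L$ is ample, the hard Lefschetz theorem for intersection cohomology gives $\dim IH^{2k}_{\mathrm{prim}}(P)=\dim IH^{2k}(P)-\dim IH^{2k-2}(P)$ for $2k\leq N$, i.e.\ the coefficient of $q^k$ in
\[ (1-q)\,IE(P)=\frac{(1-q^{4g-4})(1-q^{2g})}{1-q^2}=\sum_{j=0}^{g-1}q^{2j}-\sum_{j=0}^{g-1}q^{4g-4+2j}. \]
For the cone I retain only degrees $2k<n=6g-6$, i.e.\ $k\leq 3g-4$ (all of which satisfy $2k\leq 6g-8<N$, so the Lefschetz formula applies). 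Every monomial of the subtracted sum has $q$-degree $\geq 4g-4>3g-4$, so it drops out, while the monomials $q^{0},q^{2},\dots,q^{2g-2}$ of the first sum all have $2j\leq 2g-2\leq 3g-4$ for $g\geq 2$ and survive. Thus $IH^{2k}_{\mathrm{prim}}(P)$ is one-dimensional exactly for even $k$ with $0\leq k\leq 2g-2$, so $IH^d(N_{\Omega})\simeq\QQ$ for $d\in\{0,4,8,\dots,4g-4\}$ and vanishes otherwise. Collecting the contributions yields
\[ IE(N_{\Omega})=IP_t(N_{\Omega})=\sum_{j=0}^{g-1}q^{2j}=\frac{1-q^{2g}}{1-q^2}. \]

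The genuinely delicate points are the truncation bookkeeping — checking that the negative contributions in $(1-q)IE(P)$ lie beyond the cohomological range $d<\dim N_{\Omega}$ of the cone, which is precisely what makes the answer collapse to a clean geometric series — and the transfer of purity and Hodge–Tate type along the chain $H^*(\Omega_S)\supset IH^*(\Omega_R)\supset IH^*(P)\supset IH^*_{\mathrm{prim}}(P)=IH^*(N_{\Omega})$, which rests on \cref{prop: interseccone} being compatible with the Hodge structures and on the decomposition-theorem splitting of \cref{prop:IH(OmegaR)}. Everything else is a routine dimension count and generating-function manipulation.
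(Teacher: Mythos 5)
Your proof is correct and is essentially the paper's own argument: the paper likewise views $N_{\Omega}$ as an affine cone over the fibre $\pi_R^{-1}(x)$ of $\Omega_R$, reads off $IE(\pi_R^{-1}(x)) = IE(\Omega_R)/E(\Omega)$ from \cref{prop:IH(OmegaR)}, computes $IE(N_{\Omega}) = \big[(1-q)\,IE(\pi_R^{-1}(x))\big]_{\leq 3g-4}$ via \cref{prop: interseccone} together with hard Lefschetz, and gets purity of Hodge--Tate type from the embedding $IH^*(N_{\Omega}) \hookrightarrow IH^*(\pi_R^{-1}(x))$ and \cref{prop;HodgeTateI}. Your additional details (Künneth over the finite set $\Omega$, the decomposition-theorem summand justifying Hodge--Tate purity, and the explicit truncation bookkeeping) simply make explicit what the paper's condensed computation leaves implicit.
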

\begin{proof} Since $IH^*(N_{\Omega}) \hookrightarrow IH^*(\pi_R ^{-1}(x))$ for some $x \in \Omega$, $IH^*(N_{\Omega})$ is pure of Hodge-Tate type by \cref{prop;HodgeTateI}. 

Recall now that $N_{\Omega}$ is an affine cone over $\Omega_R$ by \cref{prop:singulMB}.\eqref{item:localmodelOmega}.
Hence, \cref{prop: interseccone} implies that the intersection Poincar\'{e} polynomial $IP_t(N_{\Omega})$ is a polynomial in the variable $q = t^2$ of degree at most $3g-4$, given by
\begin{align*}
IE(N_{\Omega})&=[IE(\pi_R ^{-1}(x))-q IE(\pi_R ^{-1}(x))]_{\leq 3g-4}= \frac{1}{E(\Omega)}[IE(\Omega_R)-q IE(\Omega_R)]_{\leq 3g-4} \\
& = \bigg[ \frac{(1-q^{4g-4})(1-q^{2g})}{(1-q^2)}\bigg]_{\leq 3g-4} = \frac{1-q^{2g}}{1-q^2}.
\end{align*}
\end{proof}

\section{Decomposition theorem}\label{sec:decompositionthm}
\begin{proof}[Proof of \cref{introthm:decompositiontheorempiT}]
Let $\pi^{\circ}_T$ 
be the restriction of $\pi_T$ over $M^{\circ} \coloneqq M \setminus \Omega$.
By \cref{prop:singulMB}.\eqref{item:localmodelSigma} the defect of semismallness of $\pi^{\circ}_T$ is 
\begin{align*}
 r(\pi^{\circ}_T) & \coloneqq \dim \pi^{-1}_T(M^{\circ}) \times_{M^{\circ}} \pi^{-1}_T(M^{\circ}) - \dim M \\
 & = 2 \dim I_{2g-3} + \dim \Sigma - \dim M = 4g-8,
\end{align*}
and $\Sigma^\circ$ is the only support of the decomposition theorem for $\pi^{\circ}_T$. Hence, there exists a splitting
\[
R \pi^{\circ}_{T, *} \QQ[\dim T]= IC_{M^\circ} \oplus \bigoplus^{2g-4}_{i=-2g+4}\mathscr{L}_i[\dim \Sigma -2i]
\]
for some semisimple local systems $\mathscr{L}_i$ supported on $\Sigma^\circ$. Restricting to $D^\circ_2 = \pi^{-1}_T(\Sigma^{\circ})$, we obtain 
\[
R (\pi^{\circ}_{T|_{D^\circ_2}})_* \QQ[\dim T]= IC_{M^\circ}|_{D^{\circ}_2} \oplus \bigoplus^{2g-4}_{i=-2g+4}\mathscr{L}_i[\dim \Sigma -2i].
\]
By \cref{prop:exceptionaldivisors}.\eqref{item:D20} there exists a commutative square
\[
\begin{tikzpicture}[baseline= (a).base]
\node[scale=1] (a) at (0,0){
\begin{tikzcd}
   \mathbb{I}_{2g-3}\ar[r, "q'"] \ar[d, "p_2"'] & D^\circ_2 \ar[d, "\pi^{\circ}_{T|D^\circ_2}"] \\
  (\CC^*)^{2g, \circ} \ar[r, "q"] &
    \Sigma^{\circ}, 
\end{tikzcd}
};
\end{tikzpicture}
\]
where the horizontal arrows are \'{e}tale double covers, $p_2$ is a Zariski locally trivial fibration with fibre $I_{2g-3}$, and  $(\CC^*)^{2g, \circ}$ is the complement in $(\CC^*)^{2g}$ of the locus fixed by the involution $v \mapsto -v$. Taking cohomology, we write
\begin{align*}
  R (\pi^{\circ}_{T|_{D^\circ_2}})_* \QQ  & = R (\pi^{\circ}_{T|_{D^\circ_2}})_* \circ (Rq'_* \QQ)^+  = (R(\pi^{\circ}_{T|_{D^\circ_2}} \circ q')_* \QQ)^+ = (R(q \circ p_2)_* \QQ)^+\\
  & = \left(q_* \bigoplus^{4g-7}_{i=0} (\QQ_{(\CC^*)^{2g, \circ}} \otimes H^{2i}(I_{2g-3}))[-2i]\right)^+\\
& = \bigoplus^{2g-4}_{i=0} \QQ_{\Sigma^{\circ}}^{\lceil \frac{i+1}{2}\rceil}[-2i](-i) \oplus \bigoplus^{2g-4}_{i=0} \mathscr{L}^{\lceil \frac{i}{2}\rceil}[- 2  i](-i) \\
& \quad \oplus \bigoplus^{4g-7}_{i=2g-3} \QQ_{\Sigma^{\circ}}^{\lceil 2g-3 - \frac{i}{2}\rceil}[-2i](-i) \oplus \bigoplus^{4g-7}_{i=2g-3} \mathscr{L}^{\lceil 2g-3 - \frac{i+1}{2}\rceil}[-2i](-i),
\end{align*}
where $\mathscr{L}$ is the rank-one local system defined in \cref{def:involution}.
Together with \cref{prop:NSIGMA}, we obtain 
\[
R \pi^{\circ}_{T, *} \QQ[\dim T]= IC_{M^\circ} \oplus \bigoplus^{2g-4}_{i=-2g+4}(\QQ_{\Sigma^\circ}^{\lceil \frac{2g-3-|i|}{2} \rceil}\oplus \mathscr{L}^{\lfloor \frac{2g-3-|i|}{2} \rfloor})[\dim \Sigma -2i](-2g+3-i).
\]

This splitting holds on $M^{\circ}$, and we now extend it through $\Omega$. Note that the defect of semismallness of $\pi_T$ is 
\begin{align*}
 r(\pi_T) & = \dim \pi^{-1}_T(M) \times_{M} \pi^{-1}_T(M) - \dim M \\
 & = 2 \dim D_1 - \dim \Omega - \dim M = 6g-8.
\end{align*}
Since $\Sigma$ is a rational homology manifold, $IC_{\Sigma}(\QQ_{\Sigma^\circ}) \simeq \QQ_{\Sigma}[\dim \Sigma]$. Further, the definition of $\mathscr{L}$ yields $IC_{\Sigma}(\mathscr{L}) = i_{\Sigma^\circ,*}\mathscr{L}[\dim \Sigma]$. Therefore, there exists integers $b(j)$ such that
\begin{align*}
R\pi_{T, *}\QQ[\dim T]= IC_{M} & \oplus  \bigoplus^{2g-4}_{i=-2g+4}\QQ_{\Sigma}^{\lceil \frac{2g-3-|i|}{2} \rceil}[\dim \Sigma -2i](-2g+3-i) \\
& \oplus \bigoplus^{2g-4}_{i=-2g+4} i_{\Sigma^\circ,*}\mathscr{L}^{\lfloor \frac{2g-3-|i|}{2} \rfloor}[\dim \Sigma-2i](-2g+3-i)\\
& \oplus \bigoplus^{3g-4}_{j=-3g+4} \QQ^{b(j)}_{\Omega}[\dim \Omega -2j](-3g+3-j),    
\end{align*}
Localising at $x \in \Omega$, we obtain
\begin{small}
\[
\dim H^{2d}(\pi^{-1}_T(x))  = \begin{cases}
\dim IH^{2d}(N_{\Omega}) + \left\lceil \frac{2g-3-|d-2g+3|}{2} \right\rceil + b(d-3g+3) & \text{ for } 0 \leq d < 4g-6,\\
\dim IH^{2d}(N_{\Omega}) + b(d-3g+3)& \text{ otherwise}.\\
\end{cases}
\]
\end{small}
Therefore, $b(j)$ is the coefficient of $q^{d-3g+3}$ of the polynomial
\begin{align*}
 E & (\pi^{-1}_T(x))  - IE(N_{\Omega}) - \sum^{2g-3}_{i=-2g+3}\left\lceil \frac{2g-3-|i|}{2} \right\rceil q^{2g-3+i} = \\
 & = \frac{1}{E(\Omega)}(E(D_1)+E(D_3)-E(D_{13})) - IE(N_{\Omega}) - \frac{q (1 - q^{2 g - 3}) (1 - q^{2 g - 2})}{(1-q) (1-q^2)} \\
 & = \frac{\left(1-q^{2g-2}\right) \left(1-q^{2g}\right) \left(1-q^4 -q^{2
 g-3}-q^{2g-1}+2q^{2g}\right)}{\left(1-q\right)^3 
 \left(1-q^2\right)}-\frac{1-q^{2g}}{1-q^2}\\
 & -\frac{q (1 - q^{2 g - 3}) (1 - q^{2 g - 2})}{(1-q) (1-q^2)}.
\end{align*}
\end{proof}

\subsection{Applications of the decomposition theorem}\label{sec:applicationdthm}
\label{sec:proofs}
\begin{proof}[Proof of \cref{thm:IEmainformula}]
Taking cohomology with compact support, \cref{introthm:decompositiontheorempiT} gives
\begin{align*}
IE(M)  & = E(T)- E(\Sigma_{\iota})^+ \cdot \left( \sum^{2g-3}_{i=-2g+3} \left\lceil \frac{2g-3-|i|}{2} \right\rceil q^{2g-3+i}\right)\\ 
  & - E(\Sigma_{\iota})^- \cdot \left(\sum^{2g-3}_{i=-2g+3} \left\lfloor \frac{2g-3-|i|}{2} \right\rfloor q^{2g-3+i} \right)\\
  & - E(\pi^{-1}_T(\Omega)) + E(\Omega) \cdot IE(N_{\Omega}) + E(\Omega) \cdot \sum^{2g-3}_{i=-2g+3}\left\lceil \frac{2g-3-|i|}{2} \right\rceil q^{2g-3+i}\\
   & = E(M) - E(\Sigma) +E(D^{\circ}_{2})\\
  & - (E(\Sigma_{\iota})^+-E(\Omega))\cdot q \frac{(1 - q^{2 g - 3}) (1 - q^{2 g - 2})}{(1-q) (1-q^2)} \\
  & - E(\Sigma_{\iota})^- \cdot q^2 \frac{(1 - q^{2 g - 4}) (1 - q^{2 g - 3})}{(1-q) (1-q^2)} +E(\Omega) \cdot IE(N_{\Omega}).
  \end{align*}
  Now \cref{prop:exceptionaldivisors}.\eqref{item:D20} gives
 \begin{equation}\label{eq:Ecirc}
    E(D^{\circ}_{2}) = E(I_{2g-3})^+ \cdot (E(\Sigma_{\iota})^+-E(\Omega)) +  E(I_{2g-3})^- \cdot E(\Sigma_{\iota})^-. 
 \end{equation}
    Therefore, we obtain
  \begin{align*}
  IE(M)& = E(M) - E(\Sigma) + (E(\Sigma_{\iota})^+-E(\Omega))\cdot \frac{1 - q^{2 g - 2}}{ 1-q^2} \\
  & + E(\Sigma_{\iota})^- \cdot q\frac{1 - q^{2 g - 4} }{1-q^2}+ E(\Omega) \cdot \frac{1-q^{2g}}{1-q^2}\\
  & = E(M)+ (q^2E(\Sigma_{\iota})^+ + qE(\Sigma_{\iota})^-) \cdot \frac{1 - q^{2 g - 4}}{ 1-q^2} +E(\Omega) \cdot q^{2g-2}.
\end{align*}
\end{proof}
The variant and anti-invariant E-polynomials of $(\CC^*)^{2g}$ and $\TJac$ with respect to the involution $\iota$ defined in \cref{def:involution} are
\begin{align}
\label{eq:C+}E((\CC^*)^{2g})^+ & =  \sum^{g}_{d=0} \dim (\Lambda^{2d}V) q^{2d}= \frac{1}{2}((1-q)^{2g}+(1+q)^{2g}),\\ 
\label{eq:C-}E((\CC^*)^{2g})^- & =  -\sum^{g-1}_{d=0} \dim (\Lambda^{2d+1}V) q^{2d+1}= \frac{1}{2}((1-q)^{2g}-(1+q)^{2g}),\\
\label{eq:ETJAC+}E(\TJac)^+  & = \frac{1}{2}(uv)^g \left((1-u)^g(1-v)^g+(1+u)^g(1+v)^g\right),\\ 
\label{eq:ETJAC-}E(\TJac)^- & = \frac{1}{2}(uv)^g \left((1-u)^g(1-v)^g-(1+u)^g(1+v)^g\right).   
\end{align}

\begin{proof}[Proof of \cref{thm:IEB}]
We compute $IE(\MB(C, \Sl_2))$ and $IE(\MB(C, \Gl_2))$ from \eqref{eq:IE} by substitution. To this end, recall that $E(\MB(C, \Sl_2))$ and $E(\MB(C, \Gl_2))$ have been computed in \cite[Theorem 1.3]{BaragliaHekmati17} (equiv. \cite[Theorem 2]{MartinezMunoz16}). The E-polynomials of $\Sigma_{\iota, B}$ and $\Omega_B$ instead can be determined by using the description of these loci in \cref{prop:singulMB}, together \eqref{eq:C+} and \eqref{eq:C-}.
Further, by \eqref{eq:PGLtoSL} we have $IE(\MB(C, \PGl_2)) = (1-q)^{-2g} \cdot IE(\MB(C, \Gl_2))$.
\end{proof}

\begin{proof}[Proof of Theorem \ref{thm:IEDol}] 
In view of \[E(M)=E(M^{\mathrm{sm}})+E(\Sigma)=E(M^{\mathrm{sm}})+E(\Sigma_{\iota})^+,\] 
we obtain \cref{thm:IEDol} simply by substituting \eqref{eq:ETJAC+} and \eqref{eq:ETJAC-} in \eqref{eq:IE}.
\end{proof}

\begin{proof}[Proof of \cref{thm:IP}]
The purity of $IH^*(\MDol)$ (\cref{prop:purityandother}) and the Poincar\'{e} duality give 
\[ IP_t(M) = t^{2\dim M} IE(\MDol; -t^{-1}, -t^{-1}).\]
\cref{thm:IP} then follows from \cref{thm:IEDol} and elementary algebraic manipulations.
\end{proof}

\begin{proof}[Proof of \cref{thm:EDolT}] By the additivity of the E-polynomial, we have
\begin{align*}
E(T)=E(M^{\mathrm{sm}})+E(D_{1})+E(D^{\circ}_{2})+E(D_{3})-E(D_{13}).    
\end{align*}
The formula \eqref{eq:Ecirc}, together with \eqref{eq:C+}, \eqref{eq:C-}, \eqref{eq:ETJAC+} and \eqref{eq:ETJAC-},  yields
\begin{align*}
    E(D^{\circ}_{2, \mathrm{B}})
    & = \frac{1-q^{2g-2}}{2(1-q)} \big((1-q^{2g-3})(1-q)^{2g-1} + (1+q^{2g-3})(1+q)^{2g-1}\big)\\ 
    & -2^{2g}\frac{(1-q^{2g-2})^2}{(1-q^2)(1-q)},\\
    E(D^{\circ}_{2, \mathrm{Dol}})
    & = \frac{1}{2}(uv)^g \frac{1-(uv)^{2g-2}}{1-uv}\bigg( \frac{(1-u)^g(1-v)^g(1-(uv)^{2g-3})}{1-uv} \\
    & + \frac{(1+u)^g(1+v)^g(1+(uv)^{2g-3})}{1+uv}\bigg)-2^{2g} \frac{(1-(uv)^{2g-2})^2}{(1-(uv)^2)(1-uv)},
\end{align*}
and \cref{prop;HodgeTateI} gives
\[E(D_1)+E(D_3)-E(D_{13})=2^{2g}\frac{\left(1-q^{2g-2}\right) \left(1-q^{2g}\right) \left(1-q^4 -q^{2
 g-3}-q^{2g-1}+2q^{2g}\right)}{\left(1-q\right)^3 
 \left(1-q^2\right)}.\]
\end{proof}

\subsection{From $\Sl_2$ to $\PGl_2$ or $\Gl_2$}\label{sec:SLtoPGL}\label{sec:SL2PGL2GL2}
In order to compute $IE(\MDol(C, G))$ or $IP_t(M(C, G))$ for $G=\PGl_2, \Gl_2$, one can repeat the arguments for $\Sl_2$, and realise that in practise one can obtain the polynomials for $\PGl_2$ by replacing the coefficients $2^{2g}$ with $1$ in the corresponding polynomials for $\Sl_2$, as explained below. Further, one can use \eqref{eq:GLtoSL} and \eqref{eq:PGLtoSL} to write the polynomials for $\Gl_2$ from the $\PGl_2$ counterparts.
\begin{defn}
$\mathrm{Var}^{\Gamma}$ is the category of algebraic varieties endowed with a $\Gamma$-action, with $\Gamma$-equivariant morphisms as morphisms.

$\Gamma\mathrm{-mHs}$ is the abelian category whose objects are $\Gamma$-modules over $\QQ$ endowed with a mixed Hodge structure, and whose morphisms are $\Gamma$-equivariant morphisms of mixed Hodge structures. 
\end{defn}
\begin{defn}
Let $X$ be an algebraic variety endowed with an algebraic $\Gamma$-action. The \textbf{virtual Hodge realization} of $(X; \Gamma {\curvearrowright} X)$ is the element in the Groethendieck ring $K_0(\Gamma\mathrm{-mHs})$ defined by the formula
\[\chi_{\mathrm{Hdg}; \Gamma}(X)=\sum_k (-1)^k [H^k_c(T(C, \Sl_2)); \rho_M \colon \Gamma \to \operatorname{Aut}(H^k_c(X))].\]
The morphism
$\chi_{\mathrm{Hdg}; \Gamma}(\cdot)\colon K_0(\mathrm{Var}^{\Gamma}) \to K_0(\Gamma\mathrm{-mHs})$
is additive.
\end{defn}

The same Hodge realization was considered in \cite[\S 4]{HauselThaddeus03}, when Hausel and Thaddeus defined E-polynomials with coefficient in the characters of the finite abelian group $\Gamma$. 

Now consider the $\Gamma$-invariant stratification of $\TDol(C, \Sl_2)$ whose strata are
\begin{enumerate} \setcounter{enumi}{0}
    \item $S_0 \simeq \{(E, \phi) \in \MDol(C, \Sl_2) |\, E \text{ is stable}\}$;
    \item $S_1 \simeq \{(E, \phi) \in \MDol(C, \Sl_2) |\, E \simeq L \oplus L^{-1}, \, L \in \operatorname{Jac}(C), \, L \neq L^{-1}\}$;
    \item $S_2 \simeq \{(E, \phi) \in \MDol(C, \Sl_2) |\, E \text{ is a non-trivial extension of }L^{-1}\text{ by }L \text{ for } L\in \operatorname{Jac}(C)\text{ with } \, L \neq L^{-1}\}$;
    \item $S_3 \simeq \{(E, \phi) \in \MDol(C, \Sl_2) |\, E \simeq L \oplus L, \, L \in \operatorname{Jac}(C)\}$;
    \item $S_4 \simeq \{(E, \phi) \in \MDol(C, \Sl_2) |\, E \text{ is a non-trivial extension of }L\text{ by }L \text{ for } L\in \operatorname{Jac}(C)\}$;
    \item $S_5 \simeq \{(E, \phi) \in \MDol(C, \Sl_2) |\, E \text{ is unstable}\}$;
    \item $S_6 = D^{\circ}_2$, $S_7 = D_3 \setminus D_{13}$ and $S_8 = D_1$.
\end{enumerate}
This is indeed a stratification of $\TDol(C, \Sl_2)$, since 
$\MDol(C, \Sl_2)^{\text{sm}} = \bigsqcup^5_{i=0} S_i \simeq \TDol(C, \Sl_2) \setminus (D_1 \cup D_2 \cup D_3)$ by \cite[Example 3.13]{Hitchin1987} and \cref{prop:singulMB}.
The additivity of the virtual realization implies that
\[\chi_{\mathrm{Hdg}; \Gamma}(\TDol(C, \Sl_2))= \sum^8_{i=0} \chi_{\mathrm{Hdg}; \Gamma}(S_i).\]
By direct inspection (see \cite[\S 3]{KiemYoo08}) one can check that there exist algebraic varieties $Z_{ij}$ endowed with a $\Gamma$-action such that:
\begin{enumerate}
    \item the $\Gamma$-module $H^k_c(Z_{ij})$ is isomorphic to the direct sum of copies of the trivial and of the regular representation of $\Gamma$, i.e.\ there exists integers $l_{ijk}$ and $m_{ijk}$ such that there exists a $\Gamma$-equivariant isomorphism
    \[H^k_c(Z_{ij}) \simeq V^{\oplus n_{ijk}}_{\text{tr}} \oplus V^{\oplus m_{ijk}}_{\text{reg}},\]
    where $\Gamma$ acts trivially on $V_{\text{tr}} \simeq \QQ$, and via the regular representation on $V_{\text{reg}}\simeq \QQ^{2^{2g}}$. We call $V^{\oplus m_{ijk}}_{\text{reg}}$ the \textbf{regular part} of $H^k_c(Z_{ij})$.
    \item we have 
    \[\chi_{\mathrm{Hdg}; \Gamma}(S_i) = \sum_{j, k} \epsilon_{ij} (-1)^k [H^k_c(Z_{ij}); \rho_{ij}\colon \Gamma \to \operatorname{Aut}(H^k_c(Z_{ij}))],\]
where $\epsilon_{ij}$ is $\pm 1$, and $\rho_{ij}$ is a direct sum of copies of the trivial and/or of the regular representation.
\end{enumerate}
Denote by $E_{\text{reg}}(Z_{ij})$ the E-polynomial of the regular part of $H^*_c(Z_{ij})$, and let $E_{\text{tr}}(Z_{ij})\coloneqq E(Z_{ij}) - E_{\text{reg}}(Z_{ij})$. Then we have
\begin{align}
E(\TDol(C, \Sl_2)) & = \sum_{i,j}\epsilon_{ij} E_{\text{tr}}(Z_{ij}) + \sum_{ij}\epsilon_{ij} E_{\text{reg}}(Z_{ij}) \label{eq:Decompositionisotipiccomp}\\
& = \sum_{ij}\epsilon_{ij} E_{\text{tr}}(Z_{ij}) + 2^{2g} \sum_{ij}\epsilon_{ij} E_{\text{reg}}(Z_{ij})^{\Gamma} \nonumber  \\ 
E(\TDol(C, \Sl_2))^{\Gamma} & = \sum_{ij}\epsilon_{ij} E_{\text{tr}}(Z_{ij}) + \sum_{ij}\epsilon_{ij} E_{\text{reg}}(Z_{ij})^{\Gamma}.   \nonumber  
\end{align}
Via the decomposition theorem, the same holds for $IE(\MDol(C, \Sl_2))$, and so for $IP_t(M(C, \Sl_2))$, by the purity of $IH^*(M(C, \Sl_2))$, as explained in the proof of \cref{thm:IP}.
\begin{rmk}\label{rmk:replace2gwith1} Since in our case the varieties $Z_{ij}$ are completely explicit, we can check that all the coefficients $2^{2g}$ in \cref{thm:IEDol} comes from the E-polynomial of the regular part. So we obtain $IE(\MDol(C, \Sl_2))^{\Gamma}$ by replacing $2^{2g}$ by $1$. By \eqref{eq:GLtoSL}  and \eqref{eq:PGLtoSL} this gives $IE(\MDol(C, G))$ with $G=\PGl_2, \Gl_2$. Analogously, knowing $IP_t(M(C, \Sl_2))$, $E(\TDol(C, \Sl_2))$ and $P_t(T(C, \Sl_2))$ we can write their invariant counterparts, as well as their variants for $G=\PGl_2, \Gl_2$. 
\end{rmk}

\begin{rmk}
By the vanishing of the odd part of $IH^*_{\mathrm{var}}(M(C, \Sl_2))$ (cf \cref{thm: varinvarIE}), every non-trivial $(\Gr^W_{r+s} IH^d_{\mathrm{var}}(M(C, \Sl_2))^{r,s}$ will contribute with non-negative coefficient to $IE_{\mathrm{var}}(M(C, \Sl_2))$. Therefore, there is no cancellation, and the $\Gamma$-modules $IH^*(M(C, \Sl_2))$ and $H^*(T(C, \Sl_2))$ are direct sum of copies of the trivial and of the regular representation of $\Gamma$ by \eqref{eq:Decompositionisotipiccomp}. Comparing with \cite{DaskalopoulosWentworth10}, one can check that the same holds for $H^*(M(C, \Sl_2))$.
\end{rmk}

\section{P=W conjectures}\label{sec:P=Wconjectures}
\subsection{P=W conjecture for twisted character varieties}\label{sec:twistedcharacter}
The computation of E-polynomials of character varieties have been initiated in \cite{HauselRodriguez-Villegas2008}, for twisted character varieties $\MBtw=\MBTW$ 
\begin{align*} \MBTW \coloneqq \big\{ (A_1, B_1, \ldots, A_{g}, B_{g}) \in G^{2g} \, \big| \, \prod^{g}_{j=1}[A_j, B_j]=e^{2\pi i/d}\cdot 1_G \big\}\sslash G. 
\end{align*}
with $G=\Gl_n, \Sl_n$ or $\PGl_n$ and $\mathrm{gcd}(n,d)=1$; see also \cite{Mereb2015}. 

As in the untwisted case, a non-abelian Hodge correspondence holds for $\MBtw$: there exists a diffeomorphism $\Psi\colon \MDoltw \to \MBtw$, from the Dolbeault moduli space $\MDoltw=\MDolTW$ of semistable $G$-Higgs bundles over $C$ of degree $d$; see \cite{HauselThaddeus04}. However, contrary to the general untwisted character variety, $\MBtw$ is smooth (a significant advantage!).

Surprisingly, Hausel and Rodriguez-Villegas \cite{HauselRodriguez-Villegas2008} in rank 2, and Mellit \cite{Mellit2019} for $\Gl_n$, observed that the cohomology of $\MBtw$ enjoys symmetries \emph{typical} of smooth projective varieties, despite the fact that $\MBtw$ is not projective. They called these symmetries curious hard Lefschetz theorem: there exists a class $\alpha\in H^2(\MBtw)$ which induces the isomorphism
\[
\cup {\alpha^k}\colon \Gr^W_{\dim \MB-2k}H^*(\MBtw)\xrightarrow{\simeq} \Gr^W_{\dim \MB+2k}H^{*+2k}(\MBtw).
\]
Note that, as an immediate consequence of the curious hard Lefschetz theorem, the E-polynomial of $\MBtw$ is palindromic. 

In the attempt to explain the curious hard Lefschetz theorem, de Cataldo, Hausel and Migliorini conjectured the P=W conjecture, and they verified it for rank 2; see \cite{deCataldoHauselMigliorini2012}. This conjecture posits that the non-abelian Hodge correspondence exchanges two filtrations on the cohomology of $\MDoltw$ and $\MBtw$ of very different origin, respectively the perverse Leray filtration \eqref{prop:characperverse} associated to the Hitchin fibration $\chi$ on $\MDoltw$ (the analogue of the map defined in \eqref{Hitchinfibration}) and the weight filtration on $\MBtw$. 
\begin{conj}[P=W conjecture for twisted moduli spaces]
\[P_k H^*(\MDolTW) = \Psi^* W_{2k}H^*(\MBTW).\]
\end{conj}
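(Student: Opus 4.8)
The plan is to reduce the claimed equality of filtrations to a finite check on ring generators, exploiting that both $P_\bullet$ and $W_\bullet$ are filtrations by subalgebras. On the Betti side the multiplicativity $W_a \cdot W_b \subseteq W_{a+b}$ is automatic, since the cup product is a morphism of mixed Hodge structures. On the Dolbeault side I would first establish that the perverse Leray filtration associated to the Hitchin map $\chi$ (the twisted analogue of \eqref{Hitchinfibration}) is multiplicative, i.e.\ $P_a \cdot P_b \subseteq P_{a+b}$; this is the geometric heart of the argument and does \emph{not} hold for an arbitrary proper map. I would deduce it from the geometric description \eqref{prop:characperverse} of the perverse filtration in terms of restriction to preimages of general affine sections of the (affine) Hitchin base, combined with the relative hard Lefschetz package for $\chi$.

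Next I would fix an explicit generating set of $H^*(\MDoltw)$, transported to $H^*(\MBtw)$ via $\Psi^*$. For $G = \Gl_2, \Sl_2, \PGl_2$ these are the tautological classes obtained as K\"unneth components of the Chern classes of a universal bundle on $C \times \MDoltw$: a degree-two class $\alpha$ (the $H^2(C)$-component of $c_2$), the $2g$ degree-three classes $\psi_i$ (the $H^1(C)$-components, carrying the Hodge structure of $H^1(C)$), a degree-four class $\beta$ (the $H^0(C)$-component), together with the extra low-degree classes specific to the $\PGl_2$ and $\Gl_2$ normalisations. That these generate the whole cohomology ring is the rank-two generation theorem, and it is precisely this property that makes the reduction to generators legitimate.

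It then remains to match the two filtrations on each generator: I would show that the weight level of each tautological class equals twice its perverse level. The weights are read off from the mixed Hodge structure on $H^*(\MBtw)$ and are constrained by the palindromicity coming from curious hard Lefschetz; the perverse levels are computed from \eqref{prop:characperverse}, i.e.\ by restricting each class to the preimage $\chi^{-1}(\Lambda^{s})$ of a general affine slice and detecting the smallest $s$ for which the restriction survives, testing the classes against the fibres of $\chi$ and the Hitchin section. Once $\mathrm{wt}(e) = 2\,\mathrm{p}(e)$ is verified generator by generator, the multiplicativity of both filtrations upgrades the equality to all of $H^*$, yielding $P_k H^*(\MDoltw) = \Psi^* W_{2k} H^*(\MBtw)$.

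The main obstacle is twofold, and both parts are genuinely hard. First, establishing multiplicativity of the perverse filtration for $\chi$: because $\chi$ is not semismall, the decomposition theorem for the Hitchin map has several supports, and one must control them and locate the tautological classes within the perverse spectral sequence. Second, carrying out the perverse-degree computation of the generators themselves, which is where the bulk of the geometric work lies. By contrast, the weight computation on the Betti side is comparatively formal once the mixed Hodge structure is known, so the decisive difficulty is entirely on the Dolbeault/perverse side.
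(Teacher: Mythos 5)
The statement you set out to prove is not a theorem of the paper at all: it is stated there as a \emph{conjecture} (the P=W conjecture for twisted character varieties, due to de Cataldo--Hausel--Migliorini), recalled in \S\ref{sec:twistedcharacter} purely as motivation for the PI=WI conjecture. The paper contains no proof of it; it only cites \cite{deCataldoHauselMigliorini2012} for its verification in rank $2$ and \cite{deCataldoMaulikShen2019} for later developments. So there is no internal proof to compare against, and your proposal should instead be measured against those cited works --- whose skeleton you have in fact reproduced accurately: tautological generation of $H^*(\MBtw)$ (\cite{HauselThaddeus03}, \cite{Markman02}), multiplicativity of both filtrations, and a generator-by-generator matching of weights with perverse degrees.

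As a proof, however, your proposal has a genuine gap, located exactly where you place the difficulty and then wave it away. You claim that multiplicativity of the perverse Leray filtration for the Hitchin map ``follows from'' the geometric characterization \eqref{prop:characperverse} combined with relative hard Lefschetz. No such deduction exists: the perverse filtration of a general proper map is \emph{not} multiplicative, and neither the de Cataldo--Migliorini test by general affine sections nor the relative hard Lefschetz package yields $P_a \cdot P_b \subseteq P_{a+b}$ by formal arguments. In \cite{deCataldoHauselMigliorini2012} the needed statement is not an input but essentially an output of the whole paper: the perverse degree of each monomial in $\alpha, \psi_j, \beta$ is pinned down by hand, using the concrete geometry of the rank-two Hitchin fibration (spectral curves, their compactified Jacobians, and delicate specialization arguments), and this computation occupies the bulk of that work; for higher rank the multiplicativity and generator computations required genuinely new ideas and came only much later. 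Likewise the ``perverse-degree computation of the generators'' that you defer to the end is not a final verification but the core of the proof. In short, what you have written is a correct roadmap of the known strategy with its two decisive steps left unproved --- consistent with the fact that the paper itself records the statement as a conjecture rather than a theorem.
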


This suggests that the symmetries of the mixed Hodge structure of the cohomology of twisted character varieties, noted by Hausel and Rodriguez-Villegas, should be understood as a manifestation of the standard relative hard Lefschetz symmetries for the proper map $\chi$ on the Dolbeault side. The latter is an isomorphism between graded pieces of the perverse Leray filtration induced by cup product with a relative $\chi$-ample class $\alpha\in H^2(\MDolTW)$:
\[
\cup {\alpha^k}\colon \Gr^P_{\dim \MDol/2-k}H^*(\MDoltw)\xrightarrow{\simeq} \Gr^W_{\dim \MDol/2+2}H^{*+2k}(\MDoltw);
\]
see for instance \cite[Theorem 2.1.1.(a)]{deCataldoMigliorini05}.

\subsection{PI=WI and the intersection curious hard Lefschetz}
In the untwisted (singular) case, curious hard Lefschetz fails in general, e.g.\ \cite[Remark 7.6]{FelisettiMauri2020}, and the E-polynomial of $\MB(C,G)$ are not palindromic; see for instance \cite[Theorem 1.2]{LogaresMunozNewstead2013}, \cite[Theorem 2]{MartinezMunoz16} or \cite[Theorem 1.3]{BaragliaHekmati17}. In order to restore the symmetries, de Cataldo and Maulik suggested to consider the intersection cohomology of $\MB(C,G)$, and in \cite[Question 4.1.7]{deCataldoMaulik2018} they conjectured:
\begin{conj}[PI=WI conjecture]
\[P_k IH^*(\MDol(C,G)) = \Psi^* W_{2k}IH^*(\MB(C,G)).\]
\end{conj}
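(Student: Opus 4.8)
The plan is to adapt the strategy of \cite{deCataldoHauselMigliorini2012} for smooth twisted character varieties to the intersection cohomology of the singular untwisted ones. First I would reduce the conjecture to its $\Gamma$-invariant incarnation. By \cref{thm:applications}.B the statement for $\Sl_2$ is equivalent to that for $\Gl_2$, and via the splitting \eqref{eq:GLtoSL} together with \eqref{eq:PGLtoSL} (under which $IH^*(\MDol(C,\PGl_2)) \simeq IH^*(\MDol(C,\Sl_2))^{\Gamma}$) the $\Gl_2$ case is equivalent to the $\PGl_2$ case, the determinant factor $M(C,\Gl_1) = \TJac$ satisfying $P=W$ for elementary reasons. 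Thus it suffices to prove $P_k = \Psi^* W_{2k}$ on $IH^*(\MDol(C,\Sl_2))^{\Gamma}$.

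The second step is to observe that both filtrations are compatible with the action of the tautological subring $R \subset H^*(M)$ on the $H^*(M)$-module $IH^*(M)$: on the Betti side the weight filtration is multiplicative because cup product is a morphism of mixed Hodge structures, and on the Dolbeault side the perverse Leray filtration for the proper $\Gm$-equivariant Hitchin map $\chi$ is multiplicative by the restriction criterion \eqref{prop:characperverse}, the vanishing condition $\eta|_{\chi^{-1}(\Lambda)} = 0$ being stable under products. Hence $P_k = \Psi^* W_{2k}$ need only be checked on a set of module generators. The weights and perverse degrees of the tautological generators match---a generator of perverse degree $p$ carrying weight $2p$---exactly as in the smooth case: the weights are read off from the purity of $IH^*$ (\cref{prop:purityandother}) and from the realization of $IC_M$ as the canonical summand of $R\pi_{T,*}\QQ$ in \cref{introthm:decompositiontheorempiT}, while the perverse degrees are computed from $\chi$ as in \cite[\S 3]{deCataldoHauselMigliorini2012}.

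The decisive and as yet missing ingredient is tautological generation of $IH^*(\MDol(C,\Sl_2))^{\Gamma}$ as a module over $R$. \cref{thm:applications}.C supplies this only in degrees below $4g-6$; promoting it to all degrees is the main obstacle. The difficulty is structural: the high-degree part of $IH^*$ absorbs the summands of \cref{introthm:decompositiontheorempiT} supported on the singular stratum $\Sigma$ and on the $2^{2g}$ points of $\Omega$, which are precisely the source of the variant classes of \cref{thm: varinvarIE}, concentrated in degrees $\geq 4g-4$ and of unclear tautological origin. Granting full generation, the matching on generators propagates by multiplicativity and the conjecture follows.

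An alternative that sidesteps generation, modeled on \cite{deCataldoMaulikShen2019}, would be to prove a support (Ng\^o-type) theorem for the intersection complex $IC_{\MDol(C,\Sl_2)}$ under the Hitchin map directly, identifying $\Gr^P_k IH^*$ with the intersection cohomology of the base with values in the $k$-th graded piece along the fibers, and then matching this against $\Gr^W_{2k} IH^*_c$ computed from \cref{introthm:decompositiontheorempiT} and constrained by the palindromicity of \cref{cor:palindromic}. The obstruction here is that the Hitchin fibers are themselves singular and the total space is not smooth, so the usual abelian-variety arguments controlling the supports do not apply verbatim; establishing the requisite support theorem in this singular setting appears to be the crux of either approach.
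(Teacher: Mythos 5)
The statement you were given is not a theorem of the paper but the PI=WI conjecture itself (posed by de Cataldo and Maulik), and the paper does not prove it: it only assembles evidence, namely the palindromicity of $IE(\MB)$ (\cref{cor:palindromic}), the equivalence of the $\Sl_2$ and $\Gl_2$ cases (\cref{cor:PSL2givesGL2vv}), and tautological generation of $IH^{<4g-6}$ (\cref{thm:generation}). So there is no proof to compare yours against, and your proposal --- as you yourself concede --- is a strategy outline with open gaps, not a proof. Within that assessment, the gap you name is real and is the central one: \cref{thm:generation} gives tautological generation only in degrees below $4g-6$, and the classes coming from the summands of \cref{introthm:decompositiontheorempiT} supported on $\Sigma$ and $\Omega$ (equivalently, the variant classes of \cref{thm: varinvarIE}, concentrated in degrees $\geq 4g-4$) have no known tautological description; nothing in the paper bridges this.

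There is, however, a second gap that you pass over too quickly, and it would sink the argument even if generation were granted. You assert that the perverse Leray filtration for $\chi$ is multiplicative ``by the restriction criterion \eqref{prop:characperverse}, the vanishing condition being stable under products.'' This is not a formal consequence of \eqref{prop:characperverse}: the general linear section used to test membership in $P_k IH^d$ has dimension $d-k-1$, which depends on both the cohomological degree and the perversity, so a product of classes $\eta_i \in P_{k_i}IH^{d_i}$ must be tested on the preimage of a section of dimension $d_1+d_2-k_1-k_2-1$, a different subvariety, and the vanishing of each $\eta_i$ on its own section gives no control there. Multiplicativity of the perverse filtration for Hitchin maps is a genuinely hard theorem even for smooth twisted moduli (it is a core point of \cite{deCataldoHauselMigliorini2012} and \cite{deCataldoMaulikShen2019}), and for the module structure of $IH^*$ over $H^*$ on the singular untwisted space it is open. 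The same applies to your alternative route: a support theorem for $IC_{\MDol}$ under $\chi$ in this singular setting is precisely what nobody knows how to prove. So the conjecture remains open; your write-up is a reasonable research plan, but it is not a proof.
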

As in the twisted case, the PI=WI conjecture and the relative hard Lefschetz theorem for $\chi$ would imply the intersection curious hard Lefschetz theorem.
\begin{conj}[intersection curious hard Lefschetz]\label{conj:intersectioncurioushardLefschetz}
There exists a class $\alpha\in H^2(\MB(C,G))$ which induces the isomorphisms
\[
\cup {\alpha^k}\colon \Gr^W_{\dim \MB-2k}IH^*(\MB(C,G))\xrightarrow{\simeq} \Gr^W_{\dim \MB+2k}IH^{*+2k}(\MB(C,G)).
\]
In particular, the intersection E-polynomial of $\MB(C,G)$ is palindromic.
\end{conj}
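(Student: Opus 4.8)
The statement packages two assertions of very different status: the intersection curious hard Lefschetz isomorphism itself, and its numerical shadow, the palindromicity of $IE(\MB(C,G))$. My plan is to establish the palindromicity \emph{unconditionally} from the closed formulas already obtained, while reducing the Lefschetz isomorphism to the (open) PI=WI conjecture combined with relative hard Lefschetz.

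For the Lefschetz isomorphism, the idea is to transport the relative hard Lefschetz symmetry for the Hitchin map $\chi$ across the non-abelian Hodge correspondence. On the Dolbeault side $\chi$ is proper, so by the decomposition theorem cup product with a $\chi$-ample class $\alpha \in H^2(\MDol(C,G))$ induces isomorphisms between the symmetric graded pieces $\Gr^P_{\dim\MDol/2-k}$ and $\Gr^P_{\dim\MDol/2+k}$ of the perverse Leray filtration on $IH^*(\MDol(C,G))$; this is relative hard Lefschetz in intersection cohomology. Granting the PI=WI conjecture, which identifies $P_k\,IH^*(\MDol(C,G))$ with $\Psi^* W_{2k}\,IH^*(\MB(C,G))$, these isomorphisms become exactly the asserted symmetries of the weight filtration, with $\alpha$ the Betti avatar of the $\chi$-ample class. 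The main obstacle is precisely PI=WI: in the singular untwisted case it is open, and the only successful strategy so far (for the twisted, smooth case) rests on the tautological generation of the $\Gamma$-invariant cohomology, which \cref{thm:applications}.C secures only in low degree.

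The palindromicity, by contrast, I would prove directly from \cref{thm:IEB}, bypassing the conjecture. For $G=\Sl_2$ it suffices to check the functional equation $P(q)=q^{6g-6}P(1/q)$ with $P(q)=IE(\MB(C,\Sl_2))$ and $6g-6=\dim_{\CC}\MB(C,\Sl_2)$; in fact each of the three summands in \cref{thm:IEB} is separately palindromic of degree $6g-6$, and after clearing powers of $q$ the verification collapses to the elementary identities $(1-q^2)^{2g-2}=(q^2-1)^{2g-2}$ and $(1-q)^{2g-2}=(q-1)^{2g-2}$, valid because the exponents are even. The $\PGl_2$ formula is handled by the same computation, the only difference being that there the last two summands become palindromic only jointly rather than individually. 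Finally, since $IE(\MB(C,\Gl_2))=(q-1)^{2g}\cdot IE(\MB(C,\PGl_2))$ and $(q-1)^{2g}$ is itself palindromic of degree $2g$, the $\Gl_2$ case follows from the fact that a product of palindromic polynomials is palindromic with degrees adding, giving total degree $2g+(6g-6)=8g-6=\dim_{\CC}\MB(C,\Gl_2)$. Thus palindromicity is genuinely a theorem here, providing numerical evidence for the conjecture whose full form remains conditional on PI=WI.
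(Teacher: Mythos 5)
Your proposal matches the paper's treatment of this statement: the paper likewise leaves the Lefschetz isomorphism itself conditional, observing that the PI=WI conjecture combined with relative hard Lefschetz for the Hitchin map would imply it, and it establishes the palindromicity unconditionally (\cref{thm:palindromicity}, i.e.\ \cref{cor:palindromic}) by direct inspection of the closed formulas of \cref{thm:IEB}, exactly as you do. One minor correction: in the $\PGl_2$ formula the last two summands are in fact each \emph{individually} palindromic with respect to degree $6g-6$ (the check again uses only $(1-q)^{2g-2}=(q-1)^{2g-2}$), not merely jointly so, though this slip only makes your verification easier, not wrong.
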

In this paper we provide some numerical evidence for \cref{conj:intersectioncurioushardLefschetz}.
\begin{thm}[\cref{cor:palindromic}]\label{thm:palindromicity}
The intersection E-polynomial $IE(\MB(C,G))$ is palindromic for $G=\Gl_2,\Sl_2, \PGl_2$.
\end{thm}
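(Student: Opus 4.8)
The plan is to read the result directly off the closed formulas for $IE(\MB(C,G))$ obtained in \cref{thm:IEB}, rather than to invoke any Poincar\'{e}-type duality on the cohomology (which would require a purity input that is unavailable on the affine Betti side). Recall that a polynomial $P(q)$ supported in degrees $[0,n]$ is \emph{palindromic of degree $n$} if $q^{n}P(q^{-1})=P(q)$. This property is preserved under $\QQ$-linear combinations of polynomials all palindromic of the same degree $n$, and the product of a polynomial palindromic of degree $m$ with one palindromic of degree $n$ is palindromic of degree $m+n$. Since $\dim_{\CC}\MB(C,\Sl_2)=\dim_{\CC}\MB(C,\PGl_2)=6g-6$ and $\dim_{\CC}\MB(C,\Gl_2)=8g-6$ by \cref{prop:sing generalities}, it suffices to verify that each intersection E-polynomial satisfies the functional equation $q^{\dim}P(q^{-1})=P(q)$ for the value $\dim$ equal to its complex dimension.

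For $\Sl_2$ I would check the three summands of $IE(\MB(C,\Sl_2))$ separately against $q^{6g-6}P(q^{-1})=P(q)$. The first summand $(q^{2g-2}+1)(q^2-1)^{2g-2}$ has honest degree $6g-6$, and the substitution $q\mapsto q^{-1}$ followed by multiplication by $q^{6g-6}$ reproduces it after collecting the powers $q^{-(2g-2)}$ and $q^{-(4g-4)}$, using that $2g-2$ is even so that $(q^{-2}-1)^{2g-2}=q^{-(4g-4)}(q^2-1)^{2g-2}$. The remaining two summands are supported in the symmetric window $[2g-2,\,4g-4]$ about $3g-3$; for instance in $\tfrac12 q^{2g-3}(q^2+1)\big((q+1)^{2g-2}-(q-1)^{2g-2}\big)$ the bracket has lowest term in degree $1$ and top term in degree $2g-3$, and a direct substitution shows it satisfies $q^{6g-6}P(q^{-1})=P(q)$; the same computation handles $2^{2g-1}q^{2g-2}\big((q+1)^{2g-2}+(q-1)^{2g-2}\big)$. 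The $\PGl_2$ case is identical once one groups the half-terms $\tfrac12 q^{2g-3}(q^2+q+1)(q+1)^{2g-2}$ and $-\tfrac12 q^{2g-3}(q^2-q+1)(q-1)^{2g-2}$: under $q\mapsto q^{-1}$ the factors $q^{-2}+q^{-1}+1$ and $q^{-2}-q^{-1}+1$ contribute $q^{-2}(q^2+q+1)$ and $q^{-2}(q^2-q+1)$, and the surviving powers of $q$ again cancel to degree $6g-6$.

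For $\Gl_2$ I would instead use the factorization $IE(\MB(C,\Gl_2))=(q-1)^{2g}\cdot IE(\MB(C,\PGl_2))$ recorded in \cref{thm:IEB}. The factor $(q-1)^{2g}=E((\CC^*)^{2g})$ is palindromic of degree $2g$ (as $2g$ is even), and $IE(\MB(C,\PGl_2))$ is palindromic of degree $6g-6$ by the previous paragraph; by multiplicativity the product is palindromic of degree $2g+(6g-6)=8g-6=\dim_{\CC}\MB(C,\Gl_2)$, as required.

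The computation is entirely elementary, so I expect the only point demanding care — and the place where a naive reading could go wrong — to be the degree bookkeeping. Each of the last two summands in the $\Sl_2$ and $\PGl_2$ formulas has honest polynomial degree $4g-4$, strictly below $6g-6$, and is palindromic only in the sense of being supported symmetrically about $3g-3$ in the window $[2g-2,4g-4]$. One must therefore verify the functional equation $q^{6g-6}P(q^{-1})=P(q)$ against the fixed target degree $6g-6$ for every summand simultaneously, rather than reading palindromicity off the leading term of each piece; the overall top coefficient $1$ arises solely from $(q^{2g-2}+1)(q^2-1)^{2g-2}$ and matches the constant term $1=\dim IH^0_c$ under the symmetry.
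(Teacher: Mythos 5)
Your proof is correct and follows the same route as the paper: \cref{cor:palindromic} (restated as \cref{thm:palindromicity}) is obtained there by direct inspection of the closed formulas in \cref{thm:IEB}, which is exactly the verification of $q^{\dim}P(q^{-1})=P(q)$ you carry out summand by summand, with the $\Gl_2$ case reduced to $\PGl_2$ via the factor $(q-1)^{2g}$. Your degree bookkeeping (the last two summands being supported symmetrically in $[2g-2,4g-4]$ rather than having degree $6g-6$) is accurate and is indeed the only point requiring care.
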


\subsection{PI=WI for $\Sl_2$ is equivalent to PI=WI for $\Gl_2$}
The P=W conjectures for $\Sl_n$ implies the P=W conjectures for $\PGl_n$ and $\Gl_n$; see \cite[\S 3.3]{FelisettiMauri2020}. The converse holds true in the twisted case for $n$ prime by \cite{deCataldoMaulikShen2020}. By \eqref{eq:GLtoSL} and \eqref{eq:PGLtoSL}, this reduction boils down to prove the P=W conjecture for the variant cohomology. The proof in \cref{thm:deCataldoMaulikShen} does not rely on the smoothness of twisted character varieties, and it extends to the singular case verbatim.
\begin{thm}[\cite{deCataldoMaulikShen2020}]\label{thm:deCataldoMaulikShen}
Suppose that
\begin{enumerate}
    \item $ q^{(1-n^2)(2g-2)}IE_{\mathrm{var}}(\MDol(C, \Sl_n); q,q)\eqqcolon q^{(1-n^2)(2g-2)}E(q)$ is palindromic; 
    \item $IE_{\mathrm{var}}(\MB(C, \Sl_n); \sqrt{q},\sqrt{q})=q^{(2-n-n^2)(g-1)}E(q)$.
\end{enumerate}
Set $c_n \coloneqq n(n-1)(g-1)$. Then we have
\[IH^d_{\mathrm{var}}(M(C, \Sl_n)) \simeq \Gr^P_{d-c_n} IH^d_{\mathrm{var}}(\MDol(C, \Sl_n)) \simeq \Gr^W_{2(d-c_n)} IH^d_{\mathrm{var}}(\MB(C, \Sl_n)).\]
\end{thm}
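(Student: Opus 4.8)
The plan is to establish two separate \emph{concentration} statements and then combine them via the non-abelian Hodge homeomorphism $\Psi$. Concretely, I would aim to prove that the perverse Leray filtration of the Hitchin map $\chi$ places all of $IH^d_{\mathrm{var}}(\MDol(C, \Sl_n))$ in the single perverse degree $d - c_n$, and that the weight filtration places all of $IH^d_{\mathrm{var}}(\MB(C, \Sl_n))$ in the single weight $2(d-c_n)$. Once both filtrations are concentrated, each cohomology group coincides with its unique nonzero graded piece, and since $\Psi^*$ identifies the underlying $\Gamma$-modules $IH^d_{\mathrm{var}}(\MB) \simeq IH^d_{\mathrm{var}}(\MDol) \simeq IH^d_{\mathrm{var}}(M)$, the three displayed isomorphisms follow at once. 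Thus the whole statement reduces to the two concentration claims, which are precisely the content of the de Cataldo--Maulik--Shen argument; the task is to check that that argument never uses the smoothness of the twisted moduli spaces.

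For the Dolbeault concentration I would exploit that $\MDol$ is semiprojective, so by \cref{prop:purityandother} the intersection cohomology $IH^*(\MDol)$ is pure; together with the vanishing of the odd variant cohomology (recorded around \cref{thm: varinvarIE} and used here as part of the input) this means the normalized polynomial $q^{(1-n^2)(2g-2)}E(q)$ of hypothesis~(1) is an honest, cancellation-free generating function for the variant intersection Betti numbers, graded so that the power of $q$ records the cohomological degree. Relative hard Lefschetz for the proper map $\chi$ — which holds on the singular $\MDol$ because it is a statement about $R\chi_* IC_{\MDol}$ via the decomposition theorem, not about smoothness — makes the variant perverse numbers symmetric about the central perverse degree. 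The palindromicity in hypothesis~(1), which is the numerical shadow of the curious hard Lefschetz and is itself forced by $\Gamma$-equivariant Poincar\'e duality on the pure space, supplies a second, complementary symmetry. I would then run the two-symmetry rigidity argument: the only distribution of the variant perverse numbers compatible simultaneously with relative hard Lefschetz and with the palindromic total is the one supported on the single antidiagonal $k = d - c_n$.

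For the Betti concentration I would read hypothesis~(2) degree by degree. After the substitution $u=v=\sqrt q$ the left-hand side records $\sum_d (-1)^d \sum_w \dim \Gr^W_w IH^d_{c,\mathrm{var}}(\MB)\, q^{w/2}$, while the right-hand side is the shifted Dolbeault polynomial $q^{(2-n-n^2)(g-1)}E(q)$, which by purity is indexed by the cohomological degree alone. The shift $c_n = n(n-1)(g-1)$ is chosen precisely so that the total weight in degree $d$ matches $2(d-c_n)$. Using the general bound that the weights on $IH^d(\MB)$ lie in a bounded window, together with the positivity afforded by the absence of odd variant classes, I would conclude that the matching of the two single-variable series can hold only if, in each fixed degree $d$, the weight filtration is pure of weight $2(d-c_n)$: any spread of weights would produce, after summation, a term that the pure, degree-locked right-hand side cannot account for.

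The main obstacle is exactly this passage from the coarse one-variable identities of hypotheses~(1) and~(2) to the refined per-degree statements about the two filtrations. The engine that makes the extraction rigid is the combination of genuine relative hard Lefschetz for $\chi$ with a one-sided comparison of the perverse and weight filtrations coming from the functoriality of mixed Hodge modules (the ``easy half'' asserting that one filtration refines the other); the numerical match then upgrades this inequality to an equality and forces concentration. The final, and genuinely routine, point is to verify that every ingredient — relative hard Lefschetz, purity via semiprojectivity, $\Gamma$-equivariant Poincar\'e duality, and the one-sided filtration comparison — is available verbatim for the singular $\MDol$ and $\MB$, so that the de Cataldo--Maulik--Shen proof transfers without invoking smoothness anywhere.
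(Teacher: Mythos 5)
Your reading of the conclusion as two concentration statements (perverse on the Dolbeault side, weight on the Betti side) glued by the homeomorphism $\Psi$ is correct, and your closing point --- that one must check the cited argument never uses smoothness --- is in fact all the paper itself says: the paper contains no proof of this theorem, it attributes it to \cite{deCataldoMaulikShen2020} and asserts that the proof there ``extends to the singular case verbatim.'' The problem is the engine you propose. Your key tool, ``a one-sided comparison of the perverse and weight filtrations coming from the functoriality of mixed Hodge modules (the easy half asserting that one filtration refines the other),'' does not exist. The perverse filtration is defined by the Hitchin map on $\MDol$; the weight filtration lives on $\MB$; these are different algebraic varieties identified only through the transcendental homeomorphism $\Psi$, which is not a morphism of mixed Hodge structures and induces no comparison of mixed Hodge modules. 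No a priori inclusion between $P$ and $\Psi^*W$ is known in either direction --- if such an ``easy half'' existed, every P=W-type statement would reduce to matching generating functions, which is emphatically not the state of the art. Without this lemma there is no inequality to ``upgrade to an equality,'' and both of your concentration arguments collapse.

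Moreover, the two numerical rigidity claims are false as stated, so the gap is not just in the justification. Betti side: take $n=2$, $g=3$, so $N=\dim \MB =12$, $c_2=4$; by \cref{thm: varinvarIE} the variant $\dim IH^{d}_{c,\mathrm{var}}$ are $63,378,63$ in degrees $d=12,14,16$, and the weight totals are $63,378,63$ in weights $8,12,16$. Besides the concentrated assignment, the assignment placing $63$ in $(d,w)=(16,16)$, $63$ in $(12,12)$, $315$ in $(14,12)$ and $63$ in $(14,8)$ satisfies positivity, both of your hypotheses (which only constrain the two marginal sums), the general window $2(d-N)\le w\le d$ on $IH^d_c$, and Poincar\'e duality, yet is not concentrated: the one-variable identity of hypothesis (2) forgets exactly the degree--weight correlation you need. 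Dolbeault side: relative hard Lefschetz, Verdier self-duality, Artin vanishing over the affine Hitchin base, and palindromicity are all satisfied by the (geometrically wrong but formally consistent) configuration in which the entire variant part lies in a single perverse summand whose hypercohomology is spread over several degrees; and any smooth projective fibration already falsifies the abstract claim that ``RHL plus a palindromic total forces support on one antidiagonal.'' What closes these gaps in \cite{deCataldoMaulikShen2020} is geometry, not numerics: the variant cohomology is identified, via the endoscopic decomposition attached to the $\Gamma$-fixed loci and support/vanishing-cycle theorems for the Hitchin fibration, with Tate-shifted cohomology of Prym-type abelian moduli, on which both filtrations are concentrated for elementary reasons; the shift is exactly $c_n$, half the codimension of the fixed loci. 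Hypotheses (1) and (2) are the numerical identities that this mechanism produces and that Mauri's rank-two computations verify; they are not, together with general Hodge theory, a sufficient input from which concentration can be derived formally.
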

Unfortunately, in the untwisted case the variant intersection E-polynomials are available only in rank 2; see \cref{thm: varinvarIE}.
\begin{cor}\label{cor:PSL2givesGL2vv}
The PI=WI conjecture for $M(C,\Sl_2)$ is equivalent to the PI=WI conjecture for $M(C,\Gl_2)$.
\end{cor}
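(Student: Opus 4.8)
The plan is to use the finite group $\Gamma \simeq (\ZZ/2\ZZ)^{2g}$ acting on $M(C,\Sl_2)$ (by tensoring a Higgs bundle with a $2$-torsion line bundle, resp.\ twisting a representation by an order-two character) to split the PI=WI conjecture for $\Sl_2$ into its $\Gamma$-invariant and $\Gamma$-variant parts, and then to match these respectively with the $\Gl_2$ problem and with an unconditional computation. The action of $\Gamma$ commutes with the Hitchin map $\chi$ and is intertwined by the non-abelian Hodge homeomorphism $\Psi$; hence the isotypic splitting $IH^* = (IH^*)^{\Gamma} \oplus IH^*_{\mathrm{var}}$ is strictly compatible with both the perverse filtration $P_\bullet$ coming from $\chi$ and the weight filtration $W_\bullet$. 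First I would record that this reduces the identity $P_k IH^*(\MDol(C,\Sl_2)) = \Psi^* W_{2k} IH^*(\MB(C,\Sl_2))$ to the two identities obtained by restricting to the invariant and to the variant summand.

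Next I would dispose of the variant summand unconditionally by invoking \cref{thm:deCataldoMaulikShen} with $n=2$, whose two numerical hypotheses are read off directly from \cref{thm: varinvarIE}. Since $(1-n^2)(2g-2)=-6g+6$ for $n=2$, hypothesis (1) is precisely the palindromicity of $q^{-6g+6}IE_{\mathrm{var}}(\MDol(C,\Sl_2);q,q)$ asserted at the end of \cref{thm: varinvarIE}. For hypothesis (2), writing $E(q)\coloneqq IE_{\mathrm{var}}(\MDol(C,\Sl_2);q,q) = \tfrac12 (2^{2g}-1)\,q^{6g-6}\big((q+1)^{2g-2}+(q-1)^{2g-2}\big)$ and noting $(2-n-n^2)(g-1)=-4g+4$, one computes $q^{-4g+4}E(q) = \tfrac12(2^{2g}-1)\,q^{2g-2}\big((q+1)^{2g-2}+(q-1)^{2g-2}\big)$, which is exactly $IE_{\mathrm{var}}(\MB(C,\Sl_2);\sqrt q,\sqrt q)$ as given by \cref{thm: varinvarIE}. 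Thus \cref{thm:deCataldoMaulikShen} applies and PI=WI holds on the variant part of $\Sl_2$.

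Finally I would identify the invariant summand with the $\Gl_2$ problem. By \eqref{eq:GLtoSL} there is an isomorphism of mixed Hodge structures $IH^*(M(C,\Gl_2)) \simeq IH^*(M(C,\Sl_2))^{\Gamma}\otimes H^*(M(C,\Gl_1))$, and via the Albanese trivialisation of \eqref{albmap} the $\Gl_2$ Hitchin map is the product of the $\Sl_2$ Hitchin map with the abelian $\Gl_1$ Hitchin map, so the isomorphism is also strictly compatible with $P_\bullet$. The smooth factor $M(C,\Gl_1)$ satisfies P=W, so by the multiplicativity of $P_\bullet$ and $W_\bullet$ under the Künneth decomposition the PI=WI identity for $\Gl_2$ is equivalent to the PI=WI identity restricted to the $\Gamma$-invariant part of $\Sl_2$. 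Combining the three steps, PI=WI for $\Sl_2$ is equivalent to PI=WI on its invariant part (the variant part holding automatically), which is in turn equivalent to PI=WI for $\Gl_2$; this gives the desired equivalence and, in passing, recovers the implication of \cite[\S 3.3]{FelisettiMauri2020}.

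I expect the main obstacle to be the compatibility bookkeeping rather than any new hard estimate: one must verify that the $\Gamma$-isotypic decomposition is strictly compatible with both $P_\bullet$ and $W_\bullet$ (using that $\Gamma$ acts by algebraic automorphisms commuting with $\chi$ and $\Psi$), that \eqref{eq:GLtoSL} respects the perverse filtrations of the two Hitchin systems, and that P=W holds for the abelian factor $M(C,\Gl_1)$. Once these structural compatibilities are in place, the only computational input is the elementary matching of variant intersection $E$-polynomials above, which is immediate from \cref{thm: varinvarIE}.
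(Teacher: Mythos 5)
Your proposal is correct and follows essentially the same route as the paper: split $IH^*$ of the $\Sl_2$ moduli space into $\Gamma$-invariant and $\Gamma$-variant parts, identify the invariant part with the $\Gl_2$ problem via \eqref{eq:GLtoSL} (the paper cites \cite[\S 3.3]{FelisettiMauri2020} for these compatibilities), and settle the variant part unconditionally by applying \cref{thm:deCataldoMaulikShen} with $n=2$, whose numerical hypotheses are exactly what \cref{thm: varinvarIE} provides. Your explicit check of the exponents $(1-n^2)(2g-2)=-6g+6$ and $(2-n-n^2)(g-1)=-4g+4$ against the formulas of \cref{thm: varinvarIE} is precisely the verification the paper leaves implicit in the sentence preceding the corollary.
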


\subsection{Tautological classes}
In \cite{HauselThaddeus03} Hausel and Thaddeus proved that $H^*(M^{\mathrm{tw}}(C, \Sl_2))^{\Gamma}$ is generated by tautological classes\footnote{The result have been generalised to arbitrary rank in \cite{Markman02}.}. This is an essential ingredient of the proof of the P=W conjecture in the twisted case \cite{deCataldoHauselMigliorini2012} and \cite{deCataldoMaulikShen2019}, and a missing desirable piece of information in the untwisted case. Here we provide a partial result: we show that tautological classes do generate the low-degree intersection cohomology of $M$.

Let $B(C, \Sl_2)$ be the (infinite dimensional and contractible) space of $\Sl_2$-Higgs bundles on $C$ of degree zero, and $B^{ss}(C, \Sl_2)$ be the corresponding locus of semistable Higgs bundles. Let $\mathcal{G}$ be the group of real gauge transformations with fixed determinant acting on this spaces by precomposition, and $\mathcal{G}^{\CC}$ its complexification.

We can identify the classifying space $B\mathcal{G} \simeq B(C, \Sl_2)$ with the space of continuous maps $\Map(C, \operatorname{SU}_2)$. The second Chern class of the tautological (flat) $\operatorname{SU}_2$-bundle $\mathcal{T}$ on $C \times \Map(C, \operatorname{SU}_2)$ admits the K\"{u}nneth decomposition
\[
c_2(\mathcal{T})= \sigma \otimes \alpha + \sum^{2g}_{j=1} e_j \otimes \psi_j + 1 \otimes \beta,
\]
where $\sigma \in H^2(C)$ is the fundamental cohomology class, and $e_1, \ldots, e_{2g}$ is a standard symplectic basis of $H^1(C)$.
Atiyah and Bott showed in \cite{AtiyahBott1983} that the rational cohomology of $B\mathcal{G}$ is freely generated by the tautological classes $\alpha$, $\psi_j$ and $\beta$. That is, $H^*(B\mathcal{G})$ is the tensor product of the polynomial algebra on the classes $\alpha$ and $\beta$ of degree 2 and 4 with an exterior algebra on the classes $\psi_j$ of degree 3,
\begin{equation}\label{eq:AtiyahBott}
    H^*(B\mathcal{G})\simeq \QQ[\alpha, \beta] \otimes \Lambda (\psi_j).
\end{equation}
In particular, the Poincar\'{e} polynomial of the classifying space $B\mathcal{G}$ is
\begin{equation}\label{Poincareclassyfingspace}
    P_t(B\mathcal{G})=\frac{(t^3+1)^{2g}}{(t^2-1)(t^4-1)}.
\end{equation}

Now the non-abelian Hodge correspondence induces the following isomorphism in equivariant cohomology
\begin{equation}\label{eq:isomequivariantcohomology}
    H^*_{\mathcal{G}}(B^{ss}(C, \Sl_2))\simeq H^*_{\Sl_2} (\Hom(\pi_1(C), \Sl_2));
\end{equation}
see \cite[Theorem 1.2]{DaskalopoulosWentworth10}. Together with Kirwan surjectivity \cite[Theorem 1.4]{DWWW11}
\begin{equation}\label{Kirwan}
    H^{*}(B\mathcal{G})\simeq H^{*}_{\mathcal{G}}(B(C, \Sl_2)) \twoheadrightarrow H^{*}_{\mathcal{G}}(B^{ss}(C, \Sl_2))^{\Gamma}, 
\end{equation}
this implies that the $\Gamma$-invariant $\Sl_2$-equivariant cohomology of $\Hom(\pi_1(C), \Sl_2)$ is generated by tautological classes.  
\begin{thm}\label{thm:generation}
$IH^{< 4g-6}(M(C, \Sl_2))$ has a canonical structure of graded ring freely generated by the tautological classes $\alpha$, $\psi_j$, $\beta$ of degree 2, 3, 4 respectively and weight 4. Among the tautological classes, only $\alpha$ is a cohomology class, i.e.\ it is in the image of the natural map $H^*(M(C, \Sl_2)) \to IH^*(M(C, \Sl_2))$.
\end{thm}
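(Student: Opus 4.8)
The plan is to prove that $IH^{<4g-6}(M)$ is computed by the $\Sl_2$-equivariant cohomology of the representation space, which Atiyah--Bott and Kirwan surjectivity describe explicitly, and then to match dimensions and weights against the explicit polynomials computed above; here $M=M(C,\Sl_2)$ and the relevant threshold is $4g-6=\codim_M\Sigma$.

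\emph{Step 1: reduction to the smooth locus.} First I would show that the restriction $IH^d(M)\to H^d(M^{\mathrm{sm}})$ is an isomorphism of mixed Hodge structures for $d<4g-6$. Excising the $2^{2g}$ points of $\Omega$ affects $IH^d$ only near degree $6g-6$, by \cref{prop: interseccone} and \cref{prop:IH(U)Omega}, so it is immaterial in our range. Along the codimension-$(4g-6)$ stratum $\Sigma^\circ$ the normal slice $N_\Sigma$ is an affine cone over $I_{2g-3}$ with $IH^{\geq 4g-6}(N_\Sigma)=0$ by \cref{prop:NSIGMA}; the resulting stalk and costalk vanishing for $IC_M$ forces the restriction to be an isomorphism below degree $4g-6$. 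Since $M^{\mathrm{sm}}$ is the locus of stable objects with finite stabiliser modulo the centre, $H^d(M^{\mathrm{sm}})$ agrees in the same range with the $\Sl_2$-equivariant cohomology of the semistable locus, and \eqref{eq:isomequivariantcohomology} identifies this with $H^d_{\mathcal{G}}(B^{ss}(C,\Sl_2))$.

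\emph{Step 2: generation and freeness.} By \cref{thm: varinvarIE} the $\Gamma$-variant part of $IH^*(M)$ vanishes in degrees below $4g-4$, so in our range $IH^d(M)$ is entirely $\Gamma$-invariant. Hence the Kirwan surjection \eqref{Kirwan}, combined with the Atiyah--Bott presentation \eqref{eq:AtiyahBott}, shows that $IH^{<4g-6}(M)$ is generated by the images of $\alpha,\psi_j,\beta$, the graded-ring structure being the one transported from the genuine cohomology ring $H^*(M^{\mathrm{sm}})$ of Step 1. To upgrade this to \emph{free} generation I would compare Hilbert series: the free graded-commutative algebra $\QQ[\alpha,\beta]\otimes\Lambda(\psi_j)$ has Poincar\'e series \eqref{Poincareclassyfingspace}, and \eqref{TaylorIPt} shows that $IP_t(M)$ coincides with it through degree $4g-6$, the first discrepancy being the term $-2g\,t^{4g-5}$. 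A graded surjection that is a dimension-wise isomorphism in every degree $<4g-6$ is injective there, so no relation appears and the generation is free.

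\emph{Step 3: weights, and which classes lift.} The weight-$4$ assertion refers to the Betti weight filtration; it follows from the Hodge--Tate property of $IE(\MB(C,\Sl_2))$, a polynomial in $q=uv$, together with the weight of the tautological classes on $M^{\mathrm{sm}}$, which matches the twisted computation and places every generator in $\Gr^W_4$. Finally, the statement that only $\alpha$ lifts to ordinary cohomology I would read from $IP_t(M)-P_t(M)$ (\cref{cor:IP-P} and the tabulated low genera): in degree $3$ this difference equals $2g=\dim IH^3(M)$, so $H^3(M)=0$ and no $\psi_j$ lies in the image of $H^3(M)\to IH^3(M)$; in degree $4$ the difference is $1$, so the one-dimensional image of $H^4(M)\to IH^4(M)$ is spanned by $\alpha^2$ and cannot contain the independent class $\beta$; whereas $\alpha$, being the first Chern class of a natural line bundle, does lie in the image of $H^2(M)\to IH^2(M)$. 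The principal obstacle is Step 1: establishing the isomorphism $IH^{<4g-6}(M)\cong H^{<4g-6}(M^{\mathrm{sm}})\cong H^{<4g-6}_{\mathcal{G}}(B^{ss})$ as mixed Hodge structures \emph{and} as rings, with precise control of the codimension of the strictly semistable locus and of the local intersection cohomology of the normal slices.
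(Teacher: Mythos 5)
Your overall route is the same as the paper's: reduce to the smooth locus, identify with equivariant cohomology, get generation from Kirwan surjectivity \eqref{Kirwan} and the Atiyah--Bott presentation \eqref{eq:AtiyahBott}, get freeness from the agreement of \eqref{TaylorIPt} with \eqref{Poincareclassyfingspace} below degree $4g-5$, and read the non-liftability of $\psi_j$ and $\beta$ off \cref{cor:IP-P}. However, the step you defer --- and explicitly flag as ``the principal obstacle'' --- is exactly the one carrying the mathematical content, and as written it is a genuine gap. The principal-bundle observation only gives $H^*(M^{\mathrm{sm}}(C,\Sl_2)) \simeq H^*_{\Sl_2}(\Hom^s(\pi_1(C),\Sl_2))$, the equivariant cohomology of the \emph{simple} locus; what \eqref{eq:isomequivariantcohomology} and Kirwan surjectivity concern is $H^*_{\Sl_2}(\Hom(\pi_1(C),\Sl_2))$, the equivariant cohomology of the \emph{whole} representation space. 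Your Step 1 silently identifies the two in degrees $<4g-6$. This is not formal: the representation space is singular along the reducible locus, and for a singular variety the restriction to the complement of a closed subset of high codimension need not be an isomorphism in low degrees (there is no Thom--Gysin sequence across singular points). Neither a codimension count of the strictly semistable locus nor the local computations of $IH^*(N_\Sigma)$ and $IH^*(N_\Omega)$ that you invoke can produce this comparison, because those control the sheaf $IC_M$ on the quotient, not the topology of the representation space upstairs.

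The paper closes precisely this hole with \cref{lem:simplelocus}, whose proof needs extra input of a different nature: (i) $\Hom(\pi_1(C),\Sl_2)$ is a \emph{reduced complete intersection} (by Etingof or Simpson), so by Goresky--MacPherson's depth results for local complete intersections its cohomology agrees with that of its smooth locus in degrees below $\codim \Sing - 1 = 4g-6$; (ii) this is made equivariant via finite-dimensional approximations of the universal bundle $E\mathrm{SL}_2$ and the Luna slice theorem; (iii) the complement of the simple locus inside the smooth locus has codimension $2g-3$, so the equivariant Thom isomorphism gives bijectivity for $d<4g-7$ and injectivity at $d=4g-7$, which combined with the dimension count finishes the comparison. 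Once this lemma is in place, your Steps 2 and 3 run essentially as in the paper --- indeed your direct comparison of $IP_t$ with $P_t(B\mathcal{G})$ would even let you bypass the Daskalopoulos--Uhlenbeck estimate the paper uses to see that its first map $a$ is bijective --- but without it the chain $IH^{<4g-6}(M)\simeq H^{<4g-6}(M^{\mathrm{sm}}) \simeq H^{<4g-6}_{\mathcal{G}}(B^{ss}(C,\Sl_2))$ breaks at the second isomorphism, and generation, freeness and the weight statement all collapse with it.
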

\begin{proof}
Since $\Sigma$ has codimension $4g-6$ in $M$, we have 
\begin{equation}\label{eq:smoothlocusIH}IH^{<4g-6}(M(C, \Sl_2))\simeq H^{<4g-6}(M^{\mathrm{sm}}(C, \Sl_2));\end{equation} 
see for instance \cite[Lemma 1]{Durfee95}. In particular, $IH^{<4g-6}(M(C, \Sl_2))$ has a natural structure of graded ring. The open subset of simple representations $\Hom^{s}(\pi_1(C), \Sl_2)$ in $\Hom(\pi_1(C), \Sl_2)$ is a $\PGl_2$-principal bundle over the smooth locus $M^{\mathrm{sm}}_B(C, \Sl_2)$, and so
\begin{equation}\label{eq:smoothlocusstable}
    H^{*}(M^{\mathrm{sm}}(C, \Sl_2)) \simeq H^*_{\Sl_2} (\Hom^{s}(\pi_1(C), \Sl_2)).
\end{equation} 
We claim that the  composition of \eqref{Kirwan}, \eqref{eq:isomequivariantcohomology}, \eqref{eq:simple}, the inverse of \eqref{eq:smoothlocusstable} and \eqref{eq:smoothlocusIH}
\begin{align*}
  H^{<4g-6}(B \mathcal{G}) & \xrightarrow{a} H^{<4g-6}_{\Sl_2} (\Hom(\pi_1(C), \Sl_2)) \\
  & \xrightarrow{b} H^{<4g-6}_{\Sl_2} (\Hom^s(\pi_1(C), \Sl_2))\simeq IH^{< 4g-6}(M(C, \Sl_2))  
\end{align*}
is an isomorphism. Indeed, $a$ is surjective by \eqref{Kirwan} (and \cref{thm: varinvarIE}), and actually bijective since by \cite[Corollary 1.3]{DaskalopoulosUhlenbeck1995} we have 
\[
P^{\Sl_2}_t(\Hom(\pi_1(C), \Sl_2))\coloneqq \sum_d \dim H^d_{\Sl_2}(\Hom(\pi_1(C), \Sl_2))t^d = P_t(B\mathcal{G}) + O(t^{4g-4}).
\]
Further, $b$ is injective by \cref{lem:simplelocus}, and actually bijective due to \eqref{TaylorIPt} and \eqref{Poincareclassyfingspace}. The free generation of $IH^{<4g-6}(M(C, \Sl_2))$ now follows from \eqref{eq:AtiyahBott}. See \cite{Shende17} for the weight of the tautological classes. Finally, $\psi_i$ and $\beta$ are not cohomology classes by \cref{cor:IP-P} and preceding lines.
\end{proof}
\begin{rmk}
\cref{thm:generation} holds for $\PGl_2$, as \[IH^{< 4g-6}(M(C, \Sl_2))=IH^{< 4g-6}(M(C, \Sl_2))^{\Gamma} \simeq IH^{< 4g-6}(M(C, \PGl_2))\] by \cref{thm: varinvarIE}, and so for $\Gl_2$ too. In the latter case however mind that there are additional generators $\epsilon_j$ which are pull-back via the map \eqref{albmap} of a standard basis of $H^1(M(C,\Gl_1))\simeq H^1(C)$. 

As a final remark, note that the proof of \cref{thm:generation} shows the more general statement that Kirwan surjectivity implies the tautological generation of the low-degree intersection cohomology for $M(C, \Sl_n)$. However, this surjectivity is an open problem for $n>2$; cf \cite{CliffNevinsShen18}. 
\end{rmk}
We prove the lemma used in the proof of \cref{thm:generation}.

\begin{lem}\label{lem:simplelocus}
The natural restriction map
\begin{equation}\label{eq:simple}
    H^d_{\Sl_2} (\Hom(\pi_1(C), \Sl_2)) \to H^d_{\Sl_2} (\Hom^s(\pi_1(C), \Sl_2))
\end{equation}
is bijective for $d<4g-7$ and injective for $d=4g-7$. 
\end{lem}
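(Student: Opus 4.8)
The plan is to realize \eqref{eq:simple} as one of the maps in the long exact sequence of a pair and to bound the missing terms by a codimension estimate. Write $U \coloneqq \Hom^s(\pi_1(C),\Sl_2)$ for the open subset of simple (irreducible) representations and $Z \coloneqq \Hom(\pi_1(C),\Sl_2) \setminus U$ for the closed, $\Sl_2$-invariant locus of reducible representations. The equivariant cohomology with supports fits into
\[
\cdots \to H^d_{\Sl_2, Z}(\Hom(\pi_1(C),\Sl_2)) \to H^d_{\Sl_2}(\Hom(\pi_1(C),\Sl_2)) \to H^d_{\Sl_2}(U) \to H^{d+1}_{\Sl_2, Z}(\Hom(\pi_1(C),\Sl_2)) \to \cdots,
\]
so \eqref{eq:simple} is injective whenever $H^d_{\Sl_2,Z}=0$ and bijective whenever in addition $H^{d+1}_{\Sl_2,Z}=0$. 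Hence it suffices to prove the vanishing $H^d_{\Sl_2,Z}(\Hom(\pi_1(C),\Sl_2))=0$ for $d < 4g-6$.

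First I would compute the codimension of $Z$. A reducible representation stabilizes a line, so up to conjugation it lands in a Borel $B \subset \Sl_2$; projecting to the maximal torus gives a character in $\Hom(\pi_1(C),\Gm)\simeq(\Gm)^{2g}$, and for a generic nontrivial character $\chi$ the unipotent lifts form the twisted cocycle space $Z^1(\pi_1(C),\CC_{\chi^2})$, of dimension $2g-1$. Thus the top component of $\Hom(\pi_1(C),B)$ has dimension $2g+(2g-1)=4g-1$, and sweeping by $\Sl_2/B\simeq\PP^1$ through the generically finite map $\Sl_2\times_B\Hom(\pi_1(C),B)\to Z$ gives $\dim_\CC Z=4g$. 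Since $\dim_\CC \Hom(\pi_1(C),\Sl_2)=6g-3$, the locus $Z$ has complex codimension $2g-3$, that is real codimension $4g-6$.

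The main obstacle is that $\Hom(\pi_1(C),\Sl_2)$ is singular precisely along (part of) $Z$, so one cannot directly invoke the principle that a closed subset of real codimension $c$ contributes to cohomology with supports only in degrees $\geq c$: that statement requires the ambient space to be smooth, where it follows from Poincaré--Lefschetz duality $H^d_Z(\,\cdot\,)\simeq H^{BM}_{2n-d}(Z)$ on a smooth ambient of complex dimension $n$, together with the vanishing of Borel--Moore homology above the real dimension of $Z$. To circumvent this I would transport the computation to the smooth model supplied by non-abelian Hodge theory. Via \eqref{eq:isomequivariantcohomology} the pair $(\Hom(\pi_1(C),\Sl_2),U)$ is replaced by the pair $(B^{ss}(C,\Sl_2),B^{s}(C,\Sl_2))$ of semistable and stable Higgs bundles: here $B^{ss}(C,\Sl_2)$ is open in the affine space of all Higgs fields, hence smooth, and the strictly semistable locus $Z^{ss}\coloneqq B^{ss}(C,\Sl_2)\setminus B^{s}(C,\Sl_2)$ corresponds to $Z$ under the correspondence, so it is closed of the same real codimension $4g-6$. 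Applying the smooth-ambient vanishing in the equivariant setting, through finite-dimensional approximations of the classifying space as in \cite{AtiyahBott1983} and \cite{Kirwan86}, yields $H^d_{\mathcal{G}, Z^{ss}}(B^{ss}(C,\Sl_2))=0$ for $d<4g-6$.

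Feeding this vanishing into the long exact sequence for the pair $(B^{ss},B^s)$, compatibly with \eqref{eq:isomequivariantcohomology} and \eqref{eq:smoothlocusstable}, shows that the restriction is bijective for $d\leq 4g-8$ and injective for $d=4g-7$, which is the assertion. The step demanding the most care is the passage to the smooth model and the verification that the non-abelian Hodge correspondence matches the stable and reducible loci while preserving codimension; granting the codimension count, the cohomological vanishing and the exact-sequence bookkeeping are then routine.
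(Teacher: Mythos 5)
Your overall strategy is genuinely different from the paper's, and parts of it are right: you reduce \cref{lem:simplelocus} to a vanishing of equivariant cohomology with supports in the reducible locus, you correctly diagnose that the singularity of $\Hom(\pi_1(C),\Sl_2)$ along (part of) that locus blocks the naive codimension argument, and your Betti-side count $\operatorname{codim}_{\CC} Z = 2g-3$ is correct. The genuine gap is the step where you ``transport'' the computation to the gauge-theoretic model. The isomorphism \eqref{eq:isomequivariantcohomology} is an isomorphism of equivariant cohomology groups; it is not induced by any map between the spaces $\Hom(\pi_1(C),\Sl_2)$ and $B^{ss}(C,\Sl_2)$ (one is a finite-dimensional affine variety with an $\Sl_2$-action, the other an infinite-dimensional space with a $\mathcal{G}$-action), and non-abelian Hodge theory only identifies the quotients. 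So ``$Z^{ss}$ corresponds to $Z$ under the correspondence, hence has the same real codimension $4g-6$'' is a non sequitur: there is no map along which codimension could be transported, and the codimension of the strictly semistable locus in $B^{ss}(C,\Sl_2)$ must be computed independently on the Dolbeault side (it is indeed $2g-3$, e.g.\ by counting parameters of extensions $0 \to (L,\phi) \to (E,\Phi) \to (L^{-1},-\phi)\to 0$, namely $(L,\phi) \in \TJac$ together with an extension class in a $(2g-2)$-dimensional hypercohomology group, against the $(6g-6)$-dimensional semistable locus modulo gauge). For the same reason, the commutativity of the square comparing the two restriction maps --- i.e.\ that \eqref{eq:isomequivariantcohomology} restricts to the isomorphism $H^*_{\mathcal{G}}(B^{s}(C,\Sl_2)) \simeq H^*_{\Sl_2}(\Hom^s(\pi_1(C),\Sl_2))$ coming from the homeomorphism of smooth moduli --- is not a formal consequence of the cited statement; it needs the actual construction (both sides retract equivariantly onto the space of solutions of Hitchin's equations). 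You flag this point, but without it the long exact sequence on the gauge side says nothing about \eqref{eq:simple}.

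For comparison, the paper stays entirely on the Betti side and needs neither infinite-dimensional duality nor any compatibility with non-abelian Hodge theory. It factors \eqref{eq:simple} through the smooth locus, $H^d_{\Sl_2}(\Hom(\pi_1(C),\Sl_2)) \to H^d_{\Sl_2}(\Hom^{\mathrm{sm}}(\pi_1(C),\Sl_2)) \to H^d_{\Sl_2}(\Hom^{s}(\pi_1(C),\Sl_2))$. The first map is bijective for $d < 4g-6$ because $\Hom(\pi_1(C),\Sl_2)$ is a complete intersection whose singular locus has codimension $4g-5$: after passing to finite-dimensional approximations of the classifying space and applying the Luna slice theorem, Goresky--MacPherson's depth results for local complete intersections give the isomorphism in that range. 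The second map is then handled exactly by your codimension-$(2g-3)$ count, but now on a smooth ambient, where vanishing of cohomology with supports below twice the complex codimension (the equivariant Thom isomorphism) is legitimate. To salvage your route you would need two additional inputs: a direct Dolbeault-side codimension computation for $Z^{ss}$, and the restriction-compatibility of \eqref{eq:isomequivariantcohomology}; alternatively, replace the passage to the gauge model by the complete-intersection argument, which repairs the singular-ambient problem without leaving the representation variety.
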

\begin{proof}
Set $c \coloneqq \operatorname{codim}\Sing \Hom(\pi_1(C), \Sl_2)=\dim M - \dim \Sigma + \dim \Stab_\Sigma = 4g-5$, where $\Stab_\Sigma\simeq \Gm$ is the stabiliser of a closed orbit over $\Sigma$. Since $\Hom(\pi_1(C), \Sl_2)$ is a complete intersection (adapt \cite[Theorem 1.2]{Etingof06} or \cite[Proposition 11.3]{Simpson1994}), there exists an isomorphism
\begin{equation}\label{LCI}
H^d_{\Sl_2} (\Hom(\pi_1(C), \Sl_2)) \simeq H^d_{\Sl_2} (\Hom^{\mathrm{sm}}(\pi_1(C), \Sl_2)) \text{ for }d<c-1.
\end{equation}
Indeed, take an approximation $E_k$ of the universal $\Sl_2$-bundle $E\mathrm{SL}_2$, i.e.\ a smooth variety $E_k$ with a free $\Sl_2$-action and such that 
$H^{< k}(X \times_{\Sl_2}E_k)\simeq H^{< k}_{\Sl_2}(X \times_{\Sl_2}E\mathrm{SL}_2)\eqqcolon H^{< k}_{\Sl_2}(X);$
see \cite[Lemma 1.3]{Anderson12}. By Luna slice theorem $X \times_{\Sl_2}E_k$ is a local complete intersection, and the singular locus has again codimension $c$. Then \eqref{LCI} follows from \cite[p.199]{GoreskyMacPherson88}.

Further, the complement of $\Hom^s(\pi_1(C), \Sl_2)$ in the smooth locus has codimension $2g-3$; see for instance \cite[\S 7.2]{Prieto18} where the complement is denoted $\mathfrak{X}^{\rho}_{g}$. Therefore, by the equivariant Thom isomorphism the restriction map
\[H^d_{\Sl_2} (\Hom^{\mathrm{sm}}(\pi_1(C), \Sl_2)) \to H^d_{\Sl_2} (\Hom^{s}(\pi_1(C), \Sl_2))\]
is bijective for $d<4g-7$ and injective for $d=4g-7$. 
\end{proof}

\subsection{P=W vs PI=WI: non-purity of $H^*(\MB)$}\label{P=WPI=WI}
Despite the failure of curious hard Lefschetz, it still makes sense to conjecture $P=W$ phenomena for the ordinary cohomology of $\MB$.

\begin{conj}[P=W conjecture for untwisted character varieties]
\[P_k H^*(\MDol(C,G)) = \Psi^* W_{2k}H^*(\MB(C,G)).\]
\end{conj}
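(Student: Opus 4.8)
Since the statement is a conjecture I will not give a complete proof but describe the strategy I would pursue, together with the one phenomenon I would establish first. The natural plan is to imitate the proof of the twisted P=W conjecture in \cite{deCataldoHauselMigliorini2012}: generate the relevant cohomology by tautological classes, compute the weight of each generator on the Betti side and its perverse degree (for the Hitchin map \eqref{Hitchinfibration}) on the Dolbeault side, and conclude by multiplicativity of both filtrations, transporting everything across the non-abelian Hodge homeomorphism $\Psi$. Before attempting this, however, I would record that $H^*(\MDol)$ is genuinely mixed, so that the twisted argument cannot be transported verbatim.

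Concretely, I would first prove that $H^*(\MDol)$ is not pure for $g>3$. Since $\MDol$ is semiprojective, \cref{prop:purityandother} gives $W_{d-1}H^d(\MDol)=\ker\{H^d(\MDol)\to IH^d(\MDol)\}$ and $W_dH^d=H^d$; hence $H^d(\MDol)$ is pure precisely when $H^d\to IH^d$ is injective. Were every $H^d$ pure, injectivity in all degrees would force every coefficient of $IP_t(M)-P_t(M)$ to be non-negative. But by \cref{cor:IP-P}, together with the displayed low-genus computations for $g=4,5$, the coefficient of $t^6$ in $IP_t(M)-P_t(M)$ equals $-\bigl(\binom{2g}{3}-\binom{2g}{2}-2g\bigr)$, which is strictly negative for $g\ge4$. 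Thus $\dim H^6(M)>\dim IH^6(M)$, the map $H^6\to IH^6$ fails to be injective, $W_5H^6(\MDol)\neq0$, and $H^*(\MDol)$ is impure.

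In the presence of this impurity I would split the conjecture along the weight filtration. By \cref{prop:purityandother} the pure top quotient $\Gr^W_dH^d(\MDol)$ is the image of $H^d\to IH^d$, on which $P_k=\Psi^*W_{2k}$ ought to follow from the PI=WI conjecture, while the complementary piece is exactly the impure kernel $W_{d-1}H^d=\ker\{H^d\to IH^d\}$; the aim is to verify the matching of filtrations on each piece separately. The hard part will be this kernel: in contrast to the twisted case, $H^*(\MDol)$ is not generated by tautological classes outside low degree, since \cref{thm:generation} controls only $IH^{<4g-6}$, so one has no explicit generators whose perverse and weight levels can be read off, and controlling the perverse filtration of these non-pure, non-tautological classes through the Hitchin map is the principal obstacle I do not see how to remove with the methods developed here.
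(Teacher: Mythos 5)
This statement is a conjecture, which the paper itself does not prove, and you are right to treat it as such: your strategic sketch matches the paper's own commentary (tautological generation is the missing ingredient from the twisted case, and the impurity of $H^*(\MDol)$ is the key obstruction distinguishing P=W from PI=WI in higher genus). The one concrete fact you do establish --- non-purity of $H^*(\MDol)$ for $g>3$, deduced from \cref{prop:purityandother} and the negative coefficient of $t^6$ in $IP_t(M)-P_t(M)$ via \cref{cor:IP-P} and the $g=4,5$ tables --- is precisely the paper's \cref{thm:puritycohomology}, proved there by the same argument (the paper additionally records an alternative proof via the Torelli group action).
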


It was proved in \cite[Theorem 6.1]{FelisettiMauri2020} that the PI=WI conjecture for genus 2 and rank 2 implies the P=W conjecture simply by restriction, since $H^*(\MB(C, \Sl_2))$ injects into $IH^*(\MB(C, \Sl_2))$, or equivalently by the purity of $H^*(\MB(C, \Sl_2))$; see \cref{prop:purityandother}. In higher genus the situation is more subtle, as the following theorem shows.

\begin{thm}\label{thm:puritycohomology}
Let $C$ be a curve of genus $g >3$. Then the natural map $H^*(M) \to IH^*(M)$ is not injective. Equivalently, $\MDol$ has no pure cohomology.
\end{thm}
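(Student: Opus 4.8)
The plan is to reduce the statement to a numerical comparison of the ordinary and intersection Poincar\'{e} polynomials of $M$, using that $\MDol$ is semi-projective. First I record the equivalence asserted in the statement. Since $\MDol$ is semi-projective (the Hitchin map is proper and $\Gm$-equivariant, see \S\ref{sec:semiproj}), \cref{prop:purityandother}.(2) gives, for every $d$,
\[ \ker\{H^d(M)\to IH^d(M)\}=W_{d-1}H^d(M), \qquad W_dH^d(M)=H^d(M). \]
Hence $H^d(M)\to IH^d(M)$ is injective exactly when $H^d(M)$ is pure of weight $d$, and therefore $H^*(M)\to IH^*(M)$ is injective if and only if $H^*(M)$ is pure. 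This settles the ``equivalently'' clause and reduces the theorem to exhibiting a single degree in which injectivity fails.

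The key point is then purely dimensional. If $H^*(M)$ were pure, the injections $H^d(M)\hookrightarrow IH^d(M)$ would force $\dim H^d(M)\le \dim IH^d(M)$ for all $d$, i.e.\ every coefficient of $IP_t(M)-P_t(M)$ would be non-negative; so it suffices to produce one negative coefficient. I would read it off in degree $6$. For $g\ge 6$, \cref{cor:IP-P} expresses the coefficient of $t^6$ as $-\bigl(\binom{2g}{3}-\binom{2g}{2}-2g\bigr)$, and an elementary manipulation gives
\[ \binom{2g}{3}-\binom{2g}{2}-2g=\tfrac{1}{3}\,g\,(4g^2-12g-1), \]
which is strictly positive for $g\ge 4$; for the remaining cases $g=4,5$ I would instead quote the explicit tables for $IP_t(M)-P_t(M)$ computed above from \cref{thm:IP} and the Poincar\'{e} polynomial of \cite{DaskalopoulosWentworth10}, whose $t^6$-coefficients are $-20$ and $-65$. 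In every case $\dim H^6(M)>\dim IH^6(M)$, so $H^6(M)\to IH^6(M)$ cannot be injective and $H^*(M)$ is not pure.

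A few points need care rather than genuine effort. Both input polynomials are exact --- \cref{thm:IP} for $IP_t$ and the corrected formula for $P_t$ --- so their degree-$6$ difference is unambiguous; and since $6<4g-4$ for $g\ge 3$, the discrepancy lives in the $\Gamma$-invariant range by \cref{thm: varinvarIE}, which lets the same conclusion pass to $\PGl_2$ through \eqref{eq:PGLtoSL} and to $\Gl_2$ through \eqref{eq:GLtoSL} after tensoring with the pure cohomology of $\MDol(C,\Gl_1)$. The substantive work is entirely upstream: the argument rests on the exact computation of $P_t$ and $IP_t$ and on the purity of $IH^*(\MDol)$ coming from semi-projectivity, so the ``hard part'' is not in this proof but in \cref{thm:IP} and \cref{cor:IP-P} that feed it. The only internal subtlety is the sign bookkeeping in $IP_t-P_t$ and checking that $g=3$ is properly excluded: there the $t^6$-coefficient equals $+1$ and indeed all coefficients of $IP_t(M)-P_t(M)$ are non-negative, so the method correctly detects nothing, consistently with the hypothesis $g>3$.
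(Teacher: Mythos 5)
Your proof is correct and follows essentially the same route as the paper: the paper's own argument is exactly that injectivity would force all coefficients of $IP_t(M)-P_t(M)$ to be non-negative, contradicted by the $t^6$-coefficient read off from \cref{cor:IP-P} (for $g\geq 6$) and the preceding tables (for $g=4,5$). Your write-up merely makes explicit what the paper leaves implicit --- the equivalence via \cref{prop:purityandother}, the positivity check $\binom{2g}{3}-\binom{2g}{2}-2g=\tfrac{1}{3}g(4g^2-12g-1)>0$, and the passage to $\PGl_2$ and $\Gl_2$ --- all of which is sound.
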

\begin{proof}
Otherwise the polynomial $IP_t(M)-P_t(M)$ would have only positive coefficients, but this is not the case by \cref{cor:IP-P} and preceding lines.
\end{proof}

\begin{rmk}[Torelli group] We propose an alternative proof of \cref{thm:puritycohomology}. The Torelli group is the subgroup of the mapping class group acting trivially on the cohomology of the curve $C$. The Torelli group acts non-trivially on $H^*(M)$ by \cite[Proposition 4.7]{DaskalopoulosWentworth10} (already in degree $6$ by \cite[Theorem 1.1]{CappellLeeMiller00}), but  $IH^{<4g-6}(M)$ is generated by tautological classes due to \cref{thm:generation}, and so the Torelli group acts trivially on $IH^{<4g-6}(M)$ as in \cite[Theorem 2.1.(c)]{CappellLeeMiller00}. Since the natural map $H^*(M) \to IH^*(M)$ is equivariant with respect to the Torelli group, we conclude that it has non-trivial kernel for $g>3$.
\end{rmk}

\subsection{P=W for resolution fails when no symplectic resolution exists}\label{sec:nosymplecticresolution}
In \cite{FelisettiMauri2020} Camilla Felisetti and the author proposed a strong version of PI=WI conjecture, called P=W for resolution, and proved it for character varieties which admits a symplectic resolution. 

\begin{conj}[P=W conjecture for resolution]
There exist resolutions of singularities $f_{\mathrm{Dol}}\colon \tMDolG \to \MDolG$ and $f_{\mathrm{B}}\colon \tMBG \to \MBG$, and a diffeomorphism $\widetilde{\Psi}\colon \tMDolG \to \tMBG$, such that the following square commutes:
\begin{equation}\label{eq:conditiononcommutativity}
\begin{tikzpicture}[baseline=(current  bounding  box.east)]
\node (A) at (0,1.2) {$H^*(\tMDolG, \QQ)$};
\node (B) at (5,1.2) {$H^*(\tMBG, \QQ)$};
\node (C) at (0,0) {$H^*(\MDolG,\QQ)$};
\node (D) at (5,0) {$H^*(\MBG, \QQ)$,};
\draw[<-,thick] (A) -- (C) node [midway,left] {$f_{\mathrm{Dol}}^*$};
\draw[<-,thick] (B) -- (D) node [midway,right] {$f_{\mathrm{B}}^*$};
\draw[<-,thick] (A) -- (B)node [midway,above] {$\widetilde{\Psi}^*$};
\draw[<-,thick] (C) -- (D) node [midway,above] {$\Psi^*$};
\end{tikzpicture}      
\end{equation}
and the lift $\widetilde{\Psi}^*$ of the non-abelian Hodge correspondence ${\Psi}^*$ satisfies the property:
\begin{equation}\label{P=W conjecture for resolution}
    P_kH^*(\tMDolG, \QQ) = \widetilde{\Psi}^* W_{2k}H^*(\tMBG, \QQ).
\end{equation}
\end{conj}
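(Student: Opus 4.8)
The plan is to establish the conjecture in the regime where it can hold, namely when $M$ admits a symplectic resolution; by \cref{prop:sing generalities} this is the case $g=2$, and it is precisely here that \cite{FelisettiMauri2020} verifies the statement. First I would produce the two resolutions and the diffeomorphism between them from a single abstract construction. By the stable isosingularity principle (\cref{Stableisosingularityprinciple}), $\MDol$ and $\MB$ share the same analytic symplectic singularities, stratified as in \cref{prop:singulMB} with the normal local models of \cref{prop:normalslices}. In genus $2$ the symplectic resolution is unique and is obtained by contracting a $\PP^2$-bundle inside the Kirwan--O'Grady tower, giving symplectic resolutions $f_{\mathrm{Dol}}\colon \tMDolG \to \MDolG$ and $f_{\mathrm{B}}\colon \tMBG \to \MBG$ on both sides. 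Because the resolution is determined by blow-up data living on the common analytic singularity model, the non-abelian Hodge homeomorphism $\Psi$, which preserves the stratification and the normal slices, lifts to a diffeomorphism $\widetilde{\Psi}\colon \tMDolG \to \tMBG$; the square \eqref{eq:conditiononcommutativity} then commutes by construction, as both $f_{\mathrm{Dol}}^*$ and $f_{\mathrm{B}}^*$ are pulled back from the same blow-up centres.

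Next I would verify \eqref{P=W conjecture for resolution}. On the Dolbeault side $\tMDolG$ is smooth and semiprojective, hence has pure cohomology (\cref{prop:purityandother}), and the composite of $f_{\mathrm{Dol}}$ with the Hitchin map is proper and carries a relative ample class, so the perverse Leray filtration $P_\bullet$ obeys relative hard Lefschetz. On the Betti side $\tMBG$ is a smooth variety on which the weight filtration $W_\bullet$ is governed by curious hard Lefschetz. Using the decomposition theorem for $f_{\mathrm{Dol}}$ (in the spirit of \cref{introthm:decompositiontheorempiT}) I would express $H^*(\tMDolG)$ as $IH^*(\MDolG)$ plus contributions from the exceptional strata, and similarly for $f_{\mathrm{B}}$; matching the two filtrations then reduces to the PI=WI identity on $M$ itself, established in \cite{FelisettiMauri2020} in this genus, together with the observation that the exceptional summands are of Hodge--Tate type (\cref{prop;HodgeTateI}) and sit in the expected perverse and weight degrees.

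The principal difficulty is the construction and filtration-compatibility of the lift $\widetilde{\Psi}$. Since $\Psi$ is only a non-algebraic homeomorphism, one must verify that it genuinely lifts to a diffeomorphism of the resolutions and, crucially, that this lift intertwines the perverse filtration with the weight filtration. Controlling this demands the precise analytic local models of \cref{prop:normalslices} together with the uniqueness of the symplectic resolution, which forces $f_{\mathrm{Dol}}$ and $f_{\mathrm{B}}$ to arise from one and the same blow-up of the shared singularity model. Once the lift is in place, the comparison $P_k=W_{2k}$ follows, via the decomposition theorem and purity, from the known PI=WI identity on $M$ and the Tate nature of the exceptional contributions.
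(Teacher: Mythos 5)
There is a fundamental mismatch here: the statement you were given is a \emph{conjecture}, and the paper contains no proof of it --- indeed it cannot, because \cref{thm:P=Wfails} in this very paper proves that the conjecture \emph{fails} for every resolution of $M(C,\Gl_n)$ whenever $g,n>1$ and $(g,n)\neq (2,2)$, i.e.\ precisely whenever no symplectic resolution exists (and a fortiori for $\Sl_n$). Your proposal quietly replaces the stated conjecture, which concerns $\MDolG$ and $\MBG$ for a general reductive $G$ and arbitrary genus $g\geq 2$, with the single case $g=2$, rank $2$, where a symplectic resolution exists; but in that case the result is exactly the main theorem of \cite{FelisettiMauri2020}, which you cite, so your argument proves nothing beyond quoting prior work, while leaving the actual statement untouched. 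Worse, any attempt to push your strategy to the general case is blocked by the concrete obstruction in the proof of \cref{thm:P=Wfails}: an $f$-exceptional divisor $E$ mapping into $\Sigma$ has Poincar\'{e} dual in $P_0H^2(\widetilde{M})$, because $\chi(\Sigma)$ has codimension $\geq 2$ in the Hitchin base and a general affine line avoids $\chi\circ f(E)$, whereas $H^2(\tMBG)$ of the smooth Betti resolution carries weight $\geq 2$; this contradicts \eqref{P=W conjecture for resolution} for \emph{every} pair of resolutions satisfying \eqref{eq:conditiononcommutativity}.

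Even within the genus-$2$ case your sketch asserts rather than proves its key steps. The existence of a lift $\widetilde{\Psi}$ making \eqref{eq:conditiononcommutativity} commute is not ``by construction'': the non-abelian Hodge correspondence $\Psi$ is a non-algebraic homeomorphism, and its compatibility with the blow-up centres is precisely the nontrivial content of \cite[Theorem 3.4]{FelisettiMauri2020}, which rests on isosingularity and deformation arguments, not on a formal matching of ``blow-up data living on the common analytic singularity model.'' Likewise, your appeal to curious hard Lefschetz ``governing'' the weight filtration on $\tMBG$ is unsupported --- curious hard Lefschetz is known for smooth \emph{twisted} character varieties and fails in general in the untwisted setting, which is the entire motivation for passing to intersection cohomology in this paper. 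The honest assessment is: the statement is a conjecture with no proof to reconstruct; it is known when a symplectic resolution exists by \cite{FelisettiMauri2020}, and it is refuted in the remaining cases for $G=\Gl_n,\Sl_n$ by \cref{thm:P=Wfails}, so a ``blind proof'' of the full statement was never available.
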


In \cite[Theorem 3.4]{FelisettiMauri2020} Camilla Felisetti and the author proved that resolutions of singularities satisfying \eqref{eq:conditiononcommutativity} do exist, and for instance the Kirwan--O'Grady desingularizations are such. 

In \cref{thm:P=Wfails} below we show however that if $M(C,\Gl_n)$ does not admit a symplectic resolution, no resolution of $M(C,\Gl_n)$ satisfies \eqref{P=W conjecture for resolution}, despite the palindromicity of the E-polynomial of $T_B$; see \cref{thm:EDolT}. A fortiori, the same negative result holds for $G=\Sl_n$.

This means that the hypotheses of \cite[Main Theorem, 3]{Felisetti2018} were optimal for $G=\Gl_n, \Sl_n$: the proof of \cref{thm:P=Wfails} suggests that the semismallness of the desingularization may be a necessary requirement for the P=W conjecture for resolutions to hold for a $G$-character variety with $G$ arbitrary reductive group. This is compatible with the expectation of \cite[\S 4.4]{deCataldoHauselMigliorini2013}. 


\begin{thm}\label{thm:P=Wfails}
Let $M(C, \Gl_n)$ be an untwisted $\Gl_n$-character variety with no symplectic resolution, i.e.\ for $g,n>1$ and $(g,n)\neq (2,2)$. Then the P=W conjecture for any resolution of $M(C, \Gl_n)$ does not hold.
\end{thm}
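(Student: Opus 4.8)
The plan is to argue by contradiction: assuming that some resolution satisfies the P=W conjecture for resolution, I will show that the resolution is forced to be \emph{semismall}, and then use the absence of a symplectic resolution to derive a contradiction. Throughout I take the Dolbeault resolution $f_{\mathrm{Dol}}\colon \tMDolG\to\MDolG$ to be $\Gm$-equivariant, so that $\tMDolG$ is smooth and semi-projective; by \cref{prop:purityandother} its cohomology is then pure, and the Hitchin map \eqref{Hitchinfibration} lifts to a proper $\Gm$-equivariant morphism $\widetilde\chi\colon\tMDolG\to B$ onto the Hitchin base $B$, for which relative hard Lefschetz holds.

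First I would record the shape forced on the perverse filtration. Feeding relative hard Lefschetz for $\widetilde\chi$ into the P=W identity $P_kH^*(\tMDolG)=\widetilde\Psi^*W_{2k}H^*(\tMBG)$ reproduces, exactly as in the twisted discussion, the intersection curious hard Lefschetz of \cref{conj:intersectioncurioushardLefschetz} for the smooth variety $\tMBG$, and in particular the palindromicity of $E(T_{\mathrm B})$ of \cref{thm:EDolT}. Since that palindromicity genuinely holds, the obstruction cannot live at the level of E-polynomials, and the argument must exploit the finer filtered structure. Concretely, because $\tMBG$ is smooth, every class of $H^d(\tMBG)$ has weight $\ge d$, so $W_{2k}H^d(\tMBG)=0$ for $2k<d$, and the P=W identity yields the vanishing
\[ \Gr^P_k H^d(\tMDolG)=0 \quad\text{for } 2k<d, \qquad(\ast) \]
i.e.\ the perverse filtration of $\widetilde\chi$ is supported in the band $\lceil d/2\rceil\le k\le d$.

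Next I would translate $(\ast)$ into semismallness of $f_{\mathrm{Dol}}$. Applying the decomposition theorem to $f_{\mathrm{Dol}}$, the non-$IC_{\MDolG}$ summands are supported on $\Sigma$ and $\Omega$. The key geometric input is that the deepest stratum $\Omega$ consists of the most degenerate objects (for $\Sl_2$, by \cref{prop:singulMB}, Higgs bundles with vanishing field, so that $\chi(\Omega)=\{0\}$; in general a scalar/special locus), whence its image under $\chi$ has codimension at least two in $B$. Therefore any exceptional divisor $D$ of $f_{\mathrm{Dol}}$ lying over $\Omega$ produces a class $[D]\in H^2(\tMDolG)$ supported, via $\widetilde\chi$, over a subset of $B$ of codimension $\ge 2$; by the de Cataldo--Migliorini characterization \eqref{prop:characperverse} it restricts to zero on $\widetilde\chi^{-1}(\Lambda^1)$ for a general line $\Lambda^1\subset B$ missing $\chi(\Omega)$, so $[D]\in\Gr^P_0H^2$ — in direct violation of $(\ast)$. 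Thus P=W for resolution precludes such divisors and, running the same contraction-codimension bookkeeping over every stratum, precludes any fibre of dimension exceeding half the codimension of its image, forcing $f_{\mathrm{Dol}}$ to be semismall. The hard part will be making this implication uniform over \emph{all} resolutions rather than only the Kirwan--O'Grady model: I must rule out band-violating low-degree classes arising from every conceivable non-semismall fibre, which requires correlating the defect of semismallness of $f_{\mathrm{Dol}}\colon T\to M$ with the codimensions in $B$ of the $\chi$-images of the orbit-type strata.

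Finally I would close the argument geometrically. By \cref{prop:sing generalities} the space $\MDolG$ has symplectic singularities with trivial canonical class, and a semismall resolution of such a variety is crepant, hence symplectic. This contradicts the hypothesis that $M(C,\Gl_n)$ has no symplectic resolution, which for $g,n>1$ is precisely the range $(g,n)\neq(2,2)$ by \cite{BellamySchedler2019}. Hence no resolution of $M(C,\Gl_n)$ can satisfy the P=W conjecture for resolution. For $G=\Sl_n$ the same conclusion follows a fortiori, as the relevant filtrations are pulled back along the isotrivial fibration $\alb$.
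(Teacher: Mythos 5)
Your core mechanism is the right one, and it is in fact the paper's: an exceptional divisor class $[E]\in H^2$ of the Dolbeault resolution whose image under $\chi\circ f_{\mathrm{Dol}}$ misses a general affine line in the Hitchin base lies in $P_0H^2$ by \eqref{prop:characperverse}, while smoothness of the Betti resolution forces $W_0H^2(\tMBG)=0$, so \eqref{P=W conjecture for resolution} fails. The gaps come from routing this mechanism through semismallness instead of using it directly. First, you run the contradiction only for exceptional divisors lying over $\Omega$; an arbitrary resolution need not have any exceptional divisor whose image is contained in $\Omega$, and you yourself flag as ``the hard part'' the promotion of the argument to every stratum and every resolution. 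The paper closes exactly this gap by applying the same argument to a divisor $E$ with $f(E)\subseteq\Sigma$ (such a divisor exists for every resolution, by factoriality of $M(C,\Gl_n)$ in the stated range and purity of the exceptional locus), together with the key codimension estimate you never invoke: $\chi(\Sigma)$ is the locus of reducible characteristic polynomials, of codimension $\tfrac{1}{2}\big(\dim M(C,\Gl_n)-\max\{\dim M(C,\Gl_{n_1})+\dim M(C,\Gl_{n_2}):n=n_1+n_2\}\big)\geq 2$ in the Hitchin base, by \cite[Lemma 2.2.(2)]{BellamySchedler2019}. With that input, your step on perverse degree $0$ already yields the contradiction for any resolution; no semismallness statement is needed at all.

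Second, your closing implication ``semismall $\Rightarrow$ crepant $\Rightarrow$ symplectic'' is false as stated, so even if you had established semismallness the argument would not close. Semismallness does not imply crepancy: blow up a point on the exceptional curve of the minimal resolution of the $A_1$ surface singularity $\CC^2/\pm 1$; the composite is still semismall but carries an exceptional divisor of discrepancy $1$. The true implications go the other way (symplectic $\Rightarrow$ crepant $\Rightarrow$ semismall, by Kaledin), so your route would additionally require ``existence of a semismall resolution implies existence of a symplectic one,'' which you neither prove nor cite; the paper only ever offers the link between P=W for resolutions and semismallness as a heuristic remark, not as a proof step. A minor further point: the theorem quantifies over \emph{all} resolutions, so you may not assume $f_{\mathrm{Dol}}$ is $\Gm$-equivariant; fortunately neither equivariance nor purity is needed for the direct argument, since $\chi\circ f_{\mathrm{Dol}}$ is proper for any resolution and \eqref{prop:characperverse} applies as is.
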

\begin{proof}
Let $f\colon \widetilde{M} \to M(C, \Gl_n)$ be a resolution of singularities of $M(C, \Gl_n)$ as in \eqref{eq:conditiononcommutativity}, and $E$ be an $f$-exceptional divisor whose image is contained in the singular locus $\Sigma \coloneqq \Sing M(C, \Gl_n)$. Recall that $\chi\colon \MDol(C, \Gl_n) \to \Lambda \coloneqq \bigoplus^n_{i=1} H^0(C, K^{\otimes i}_C)$ is the Hitchin fibration \eqref{Hitchinfibration}. 

The locus $\chi(\Sigma)$ consists of reducible characteristic polynomials, and it has codimension
\[
\frac{1}{2}(\dim M(C, \Gl_n) - \max\{\dim M(C, \Gl_{n_1})+\dim M(C, \Gl_{n_2}): n=n_1+n_2\}) \geq 2.
\]
The last inequality follows for instance from \cite[Lemma 2.2.(2)]{BellamySchedler2019}. In particular, the general affine line in $\Lambda$ avoids $\chi \circ f(E) \subseteq \chi(\Sigma)$. Then by \eqref{prop:characperverse} the Poincar\'{e} dual of $E$ belongs to $P_0 H^2(\widetilde{M})$. However, since $\widetilde{M}_{\mathrm{B}}$ is smooth, $H^2(\widetilde{M}_B)$ has weight not smaller than 2. This contradicts \eqref{P=W conjecture for resolution}. 
\end{proof}

\bibliographystyle{plain}
\bibliography{construction}
\end{document}